\newcommand{\R}{\ensuremath{\mathbb{R}}}
\newcommand{\C}{\ensuremath{\mathbb{C}}}
\newcommand{\Q}{\ensuremath{\mathbb{Q}}}
\newcommand{\N}{\ensuremath{\mathbb{N}}}
\newcommand{\F}{\ensuremath{\mathbb{F}}}
\newcommand{\0}{\ensuremath{\overline{0}}}
\newcommand{\fa}{\mathfrak{a}}
\newcommand{\fb}{\mathfrak{b}}
\newcommand{\fp}{\mathfrak{p}}
\DeclareMathOperator{\Spec}{Spec}
\DeclareMathOperator{\Hom}{Hom}
\newtheorem{Thm}{Theorem}[section]
\newtheorem{Cor}[Thm]{Corollary}
\newtheorem{Lem}[Thm]{Lemma}
\newtheorem{Prop}[Thm]{Proposition}
\newtheorem{Conj}[Thm]{Conjecture}
\theoremstyle{definition}
       \newtheorem{defi}[Thm]{Definition}
       \newtheorem{Rmk}[Thm]{Remark}
       \newtheorem{ex}[Thm]{Example}
        \newtheorem{lemma}[Thm]{Lemma}
    \newtheorem{Question}[Thm]{Question}
\numberwithin{equation}{Thm}
\title{Frobenius  Splitting in Commutative Algebra}
\author{Karen E. Smith and Wenliang Zhang}
\begin{document}
\maketitle
\tableofcontents

The purpose of these lectures is to give a gentle introduction to Frobenius splitting, or more  broadly ``Frobenius techniques,"  for beginners.  Frobenius  splitting has inspired a vast arsenal of techniques in commutative algebra, algebraic geometry, and representation theory. 
   Many related techniques have been developed by different camps of researchers, often using different language and notation. Although 
   there are great number of technical papers and books written over the past forty years,  many of the most elegant ideas, and the connections between them,  have  coalesced only in the past decade. We wish to bring this emerging simplicity  to the 	uninitiated. 
   
Our story of  Frobenius splitting begins in the 1970s, with  the proof of the celebrated Hochster-Roberts' theorem on the Cohen-Macaulayness of  rings of invariants \cite{HochsterRobertsInvSubring}. This proof, in turn, was inspired by Peskine and Spziro's ingenious  use of the iterated Frobenius---or $p$-th power--- map to prove a constellation of ``intersection conjectures" due to Serre \cite{ PeskineSzpiroHomConjectures}.  Mehta and Ramanathan coined the term ``Frobenius splitting"  a decade later in a beautiful paper which moved beyond the affine case  to prove theorems about Schubert varieties and other important topics in the representation theory of algebraic groups \cite{Mehta-Ramanathan}. 
Although these ``characteristic $p$ techniques" are powerful also for proving theorems for algebras and varieties over fields of characteristic zero,  our lectures focus on the prime characteristic case, since the technique of  reduction to characteristic $p$ has now become fairly standard.

The first lecture treats Frobenius splitting  in the local algebraic spirit  of Hochster and Roberts, where it provides a tool for controlling the singularities of a local ring.  We prove the Hochster-Roberts' theorem, giving what is essentially Hochster and Huneke's tight closure proof without explicitly mentioning tight closure \cite{HochsterHunekeJAMS}.  Roughly the point is that  a ring of invariants (of a linearly reductive group) is a direct summand of a polynomial ring,  and as such inherits a strong form of Frobenius splitting called {\it F-regularity}, which in turn, implies Cohen-Macaulayness.

The second lecture considers Frobenius splitting  for schemes in the  spirit of Mehta and Ramanathan, emphasizing how the Frobenius map can be used to study {\it global } properties of a smooth projective scheme. For example, we show how  Frobenius splitting leads to powerful vanishing theorems for cohomology of line bundles, and prove structure theorems for Frobenius split and globally F-regular  projective varieties.  We explain  how these local and global  Frobenius tools  are equivalent, despite developing  independently during the last decades of the twentieth century.  (This separate development is evidenced by the surprising disjointness of the  two  monographs ``Tight Closure and its Applications" by Huneke in 1996 \cite{HunekeTightClosure}, and ``Frobenius Splitting Methods in Geometry and Representation Theory" by Brion and Kumar, 2005 \cite{BrionKumar}).   The two schools independently discovered the same ideas, though often in language almost impenetrable to the other: Ramanathan's notion of {\it Frobenius splitting along a divisor} \cite{RamanathanFSplittingandSchubertVar} is closely related to Hochster and Huneke's {\it strong F-regularity} \cite{HochsterHunekeTCStrongFRegularity};    compatibly split subschemes turn out to be essentially dual to modules with Frobenius action in commutative algebra; and criteria  for Frobenius splitting  of  projective varieties   (in terms of  pluri-canonical sections)   amount to dual  criteria in terms of Frobenius actions on the injective hull of the  residue field in a local ring.   Both points of view are enhanced by understanding the connections between them. 
       
     The third and fourth lecture treat {\it test ideals,\/}  special ideals (subschemes) carrying information about the action of Frobenius.  Although test ideals first arose  on the commutative algebra side as a technical component of local tight closure theory, more recent work of Karl Schwede has led to deeper understanding of the test ideal in the broader context of Frobenius splitting for schemes \cite{SchwedeCentersOfFPurity}. Test ideals can be viewed as a prime characteristic analog of multiplier ideals  (\cite{SmithMulplierIdealUniversalTestIdeal}, \cite{HaraGeometricInterpTestideals})  and also of log-canonical centers for complex varieties  \cite{SchwedeCentersOfFPurity}, depending on the context.   Indeed in recent years, this connection  is of increasing interest in the minimal model program in characteristic $p$. The third lecture develops the ``absolute test ideal" as one ideal in a distinguished lattice of ideals well-behaved with respect to the Frobenius map. The fourth lecture develops test ideals for {\it pairs} in the important special case where the ambient ring is regular. We present simple  proofs of all the basic properties
     in the setting of an {\it regular} ambient regular ring (much more technical proofs   are  scattered throughout the literature as special  cases of more general results due to Hara, Watanabe, Takagi and Yoshida).  We include a self-contained development of an asymptotic theory of test ideals analogous to the story of asymptotic multiplier ideals developed in \cite{ELS1}, including an application to  the behavior of symbolic powers of ideals in a regular ring.    
     
        This article is not in any way intended to compete with the excellent surveys
    \cite{SchwedeTuckerSurvey} or \cite{BlickleSchwedeSurvey}, both of which contain a more technical and extensive survey of recent ideas. There are also older surveys such as \cite{SmithSantaCruz} which explain more about reduction to characteristic $p$ in this context and the connections between singularities in the minimal model program and characteristic $p$ techniques. Other possible surveys of interest include Huneke's lectures on Tight Closure  \cite{HunekeTightClosure}, Brion-Kumar's text on Mehta-Ramanathan's Frobenius splitting  \cite{BrionKumar}, Swanson's notes on Tight Closure \cite{SwansonTightclosure}, Huneke's survey on F-signature and Hilbert-Kunz multiplicities \cite{HunekeLectureHKFSignature},   the surveys
    \cite{BenitoFaberSmith} (more basic) or \cite{MustataIMPANGA} (more advanced) on log canonical and F-threshold, or Holger Brenner's survey on geometric methods in tight closure theory \cite{BrennerTCVB}. 
    
    We are grateful to Angelica Benito, Daniel Hern\'andez, Greg Muller, Luis N\'u\~nez, Karl Schwede,  Felipe P\`erez, and Emily Witt  for carefully reading a draft of this paper and making suggestions to improve it. 
\section{The Frobenius Map for Rings: the local theory.}

Let $R$ be any commutative ring of prime characteristic $p$.  The Frobenius map is the $p$-th power map:
$$
F: R \rightarrow R; \,\,\,\,\,\,\,\,\,\,\,\,\,\,\,\,
r \mapsto r^p.
$$
Because $(r + s)^p = r^p + s^p$ in characteristic $p$, the Frobenius map is a {\it ring homomorphism.}  Its image is the subring $R^p$  of all elements of $R$ that are $p$-th powers. We thus have an inclusion of rings $R^p \hookrightarrow R$. 

Our goal is to use the Frobenius map---or more precisely the {\bf $R^p$-module structure of $R$}---to understand the singularities of the ring $R$. Typically,   thankfully,  this module is finitely generated:

\begin{defi}\label{F-finite}
A ring $R$  of positive characteristic $p$ is said to be {\bf F-finite}  if $R$ is a finitely generated module over its subring $R^p$. 
\end{defi}

F-finiteness is a mild assumption, usually satisfied in ``geometric" settings. For example, a perfect field $k$ is F-finite of course: by definition $k^p = k$. Every ring finitely generated over an F-finite ring is F-finite. In particular, finitely generated algebras over perfect fields are F-finite.
Moreover, it is easy to see that the class of F-finite rings is closed under localization, surjective image, completion at a maximal ideal, and finite extensions.

Already we can observe that 
one of the most basic singularities of the ring $R$ can be detected by Frobenius. Namely $R$ is {\it reduced} (meaning that $0$ is the only nilpotent)  if and only if the Frobenius map is injective.

A  less trivial observation, and indeed the starting point for  using Frobenius to classify singularities,  is the  famous 1969 theorem of  Ernst Kunz:

\begin{Thm}\label{kunzthm}\cite{Kunz} 
An F-finite ring $R$ is regular if and only if $R$ is locally free as an $R^p$-module.{\footnote{More generally, even if $R$ is not F-finite, Kunz shows that a  ring of characteristic $p >0$ is regular if and only if its Frobenius map is flat.  
}}
\end{Thm}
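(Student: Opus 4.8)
The plan is to reduce to the local case and then prove the two implications separately; the ``only if'' will be soft, while the ``if'' carries all the content. Both ``$R$ regular'' (checked at maximal ideals) and ``$R$ locally free over $R^p$'' are local conditions, and for a finitely generated module over a Noetherian ring local freeness is the same as flatness; since $F$ factors as the surjection $R\twoheadrightarrow R^p$, $r\mapsto r^p$ (an isomorphism once $R$ is reduced), followed by $R^p\hookrightarrow R$, flatness of $F$ is flatness of $R$ over $R^p$. One checks that localizing $R$ at a maximal ideal $\fm$ is the same as inverting $R^p\setminus(\fm\cap R^p)$, so I am reduced to proving, for $(R,\fm,k)$ Noetherian local and F-finite: $R$ regular $\iff$ $R$ is $R^p$-free $\iff$ $F$ is flat.

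For ``$R$ regular $\Rightarrow$ $R$ free over $R^p$'': a regular ring is reduced, so $F$ identifies $R^p$ with $R$; in particular $R^p$ is regular local, so the nonzero finitely generated $R^p$-module $R$ has finite projective dimension over $R^p$, and Auslander--Buchsbaum gives $\mathrm{pd}_{R^p}R=\dim R^p-\depth_{R^p}R$. Since $\fm\cap R^p$ generates the $\fm$-primary ideal $\fm^{[p]}$ of $R$ (and powers of a system of parameters remain a regular sequence in the Cohen--Macaulay ring $R$), one has $\depth_{R^p}R=\depth_R R=\dim R$, and $\dim R^p=\dim R$; hence $\mathrm{pd}_{R^p}R=0$ and $R$ is $R^p$-free.

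The real work is ``$F$ flat $\Rightarrow$ $R$ regular.'' I would argue in the following steps. (i) Every iterate $F^e$ is flat (a composite of flat maps), and being a local homomorphism it is faithfully flat. (ii) $R$ is reduced: the finitely generated nilradical $N$ satisfies $N^{[p^e]}=0$ for $e\gg0$, so, writing $F^e_*R$ for $R$ regarded as an $R$-module via $r\cdot x=r^{p^e}x$, we get $N\otimes_R F^e_*R\cong N^{[p^e]}=0$, and faithful flatness forces $N=0$. (iii) $R$ is F-finite and flat over the Noetherian local ring $R^p$, hence $R^p$-free of some rank $b\ge1$, and iterating the isomorphism $F\colon R\cong R^p$ gives $F^e_*R\cong R^{\,b^e}$ as $R$-modules. (iv) Tensoring with $k$ over $R$ yields $k^{\,b^e}\cong F^e_*R\otimes_R k\cong F^e_*(R/\fm^{[p^e]})$; since $F^e_*k\cong k^{q^e}$ where $q:=[k:k^p]<\infty$, the exact functor $F^e_*$ multiplies lengths by $q^e$, so $b^e=q^e\,\ell_R(R/\fm^{[p^e]})$, i.e.\ $\ell_R(R/\fm^{[p^e]})=(b/q)^e$. (v) From $\fm^{\,\mu(p^e-1)+1}\subseteq\fm^{[p^e]}\subseteq\fm^{\,p^e}$ (with $\mu$ the number of generators of $\fm$) and the Hilbert--Samuel polynomial, $\ell_R(R/\fm^{[p^e]})$ lies between fixed positive multiples of $p^{e\dim R}$ for $e\gg0$; combined with (iv) this forces $b/q=p^{\dim R}$, hence $\ell_R(R/\fm^{[p^e]})=p^{e\dim R}$ for all $e$. (vi) Finally, Kunz's length inequality says that a $d$-dimensional Noetherian local ring satisfies $\ell(R/\fm^{[q]})\ge q^d$ for $q=p^e>1$, with equality for one (hence every) such $q$ if and only if $R$ is regular; so $R$ is regular.

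The step I expect to be the obstacle is (vi): the inequality, and especially its sharp ``equality $\Rightarrow$ regular'' half, is the one ingredient that is not purely formal. It is proved by induction on $d$, choosing a parameter $x\notin\fm^2$ by prime avoidance and tracking $\ell(R/\fm^{[q]})$ through the exact sequences relating $R/\fm^{[q]}$, $R/(\fm^{[q]}+xR)$ and the colon ideals $(\fm^{[q]}\!:x^j)$, the equality case being precisely what forces $\mu(\fm)=\dim R$. One could instead hope to induct on $\dim R$ directly, passing from $R$ to $R/xR$ with $x\notin\fm^2$ a nonzerodivisor, but the descent of Frobenius-flatness to $R/xR$ is itself not formal --- it does not follow merely from freeness of $F_*R$ together with the exact sequences linking the Frobenius pushforwards of $R/x^{p-1}R$, $R/x^pR$ and $R/xR$ --- so a substantive length estimate seems unavoidable in any approach, and the F-finiteness hypothesis is exactly what lets ``flat'' be upgraded to ``free'' in steps (iii)--(iv).
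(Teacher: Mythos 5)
The paper states Theorem~\ref{kunzthm} without proof, citing Kunz directly, so there is no in-text argument of the authors' to compare against; I will instead assess the proposal on its own terms and against Kunz's classical strategy, which it closely follows. The forward direction (regular $\Rightarrow$ $R$ free over $R^p$) via Auslander--Buchsbaum is correct and clean; the common textbook alternative is to verify flatness by the local criterion, using that Frobenius powers of a regular system of parameters remain a regular sequence, but your route is shorter. In the reverse direction, steps (i)--(v) are a sound and rather elegant derivation that Frobenius flatness plus F-finiteness forces $\ell_R(R/\fm^{[p^e]})=p^{e\dim R}$ exactly for every $e$. One small simplification: for (ii) it suffices to recall that any faithfully flat ring homomorphism is injective (tensor $0\to\ker F\to R\to R/\ker F\to 0$ with $F_*R$), which gives reducedness at once without invoking Frobenius powers of the nilradical.

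The substantive caveat is the one you flag yourself. Step (vi) --- the inequality $\ell(R/\fm^{[q]})\ge q^{\dim R}$ together with the sharp implication that equality for a single $q>1$ forces regularity --- is not a lightweight lemma one can cite on the cheap: it \emph{is} the technical core of Kunz's 1969 paper, of essentially the same depth as the theorem being proved. Your one-line sketch (induct on $d$, choose $x\notin\fm^2$ not in a minimal prime, track lengths through $R/(\fm^{[q]}+xR)$ and the colon ideals $(\fm^{[q]}:x^j)$) points in the right direction but suppresses the actual bookkeeping --- the filtration of $R/\fm^{[q]}$ by the images of $x^j$, the identification of its graded pieces, the base case in dimension zero, and the delicate analysis in the equality case. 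So what you have written is best read as a correct and complete \emph{reduction} of ``Frobenius flat $\Rightarrow$ regular'' to Kunz's Hilbert--Kunz length theorem, rather than a self-contained proof. That reduction is valuable and correctly executed, and your closing diagnosis --- that F-finiteness is exactly what upgrades ``flat'' to ``free'' so the length count can run, and that a naive induction on $\dim R$ via $R/xR$ stalls because Frobenius-flatness does not formally descend along a nonzerodivisor --- is accurate and worth recording.
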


\begin{ex} Let $R$ be the ring $\mathbb F_p[x, y]$. Considered as a module over the subring $R^p =  \mathbb F_p[x^p, y^p]$, it is easy to see that $R$ is a free $R^p$-module. Indeed, the monomials 
$$\{x^ay^b; \,\,\,\,\,\,\,  0 \leq a, b < p \}$$
form a free basis.  Similarly, a polynomial ring $R$ in $d$ variables over  $\mathbb F_p$ is a free-module over $R^p$ of rank $p^d$. \end{ex}

As we will see, it is possible to classify the singularities of $R$ according to how far the $R^p$-module $R$ is from free. This is one of the ways in which the local theory of ``Frobenius techniques" takes shape. 

\subsubsection{Notation}
For simplicity, let us assume that  $R$ is a domain. In this case, the inclusion of $R^{p}$-modules $R^p \hookrightarrow R$ is entirely equivalent to the inclusion of $R$-modules 
$R \hookrightarrow R^{1/p}$, where $R^{1/p}$ is the subring of $p$-th roots of elements of $R$ in an  algebraic closure of the fraction field of $R$ (note that each $r \in R$ has a {\it unique\/} $p$-th root).
Thus to understand the $R^p$ module structure of $R$ is  to understand the $R$-module structure of $R^{1/p}$. Both are equivalent to viewing $R$ as an $R$-module via restriction of scalars by the Frobenius map $F: R \rightarrow R$. When using this last point of view, it can be useful to notationally distinguish the source and target copies of $R$; my favorite way  (because it is consistent with the standard notation used for maps of schemes as in \cite{Hartshorne}) is to use $F_*R$ for the target copy of $R$, so that the Frobenius map is 
denoted $F: R \rightarrow F_*R$.
It is worth  being open to any of these notations, since depending on the situation, one may be more illuminating  than another.{\footnote{In the tight closure literature, the notation $F_*R$ is often replaced by ${}^1R$, so the Frobenius map is written $R \rightarrow {}^1R$, with the second copy of $R$ denoting  $R$ as an abelian group with $R$-module structure $r \cdot x = r^px$ where $r\in R$ and $x\in {}^1R$. The notation $R^{1/p}$ can also be used to denote the target copy of $R$ in general; if $R$ is reduced, each element has a unique $p$-th root inside the total ring of quotients so that the Frobenius map beomes the inclusion $R \hookrightarrow R^{1/p}$, but this notation can be misleading if $R$ is not reduced because then the Frobenius  map $R\rightarrow R^{1/p}$ is not injective. }}   On the other hand, sometimes it is convenient {\it not} to notationally distinguish the source and target of Frobenius; this point of view is at the heart of Blickle's theory of Cartier modules. Cf. Remark \ref{Cartier}.

\subsection{Splitting.}

Let
$R \rightarrow  S$
be any  homomorphism of rings. Considering $S$ as an $R$-module via restriction of scalars,  we can ask whether or not
this map {\it splits \/} in the category of $R$-modules.

\begin{defi}
We say that $R \rightarrow S$ {\it splits\/}
if there is an $R$-module map $ S \overset{\phi}\rightarrow R$ such that the composition
$$
R \rightarrow S \overset{\phi}\rightarrow R
$$ is the identity map on $R$.
 Equivalently, $R \rightarrow  S$ splits if there exists   $\phi \in \Hom_{R}(S, R)$ such that $\phi(1) = 1$.  
 \end{defi}
 
 If $R \rightarrow S$ splits, then it is obviously injective, so we often restrict attention to inclusions of rings. Given an inclusion $R \hookrightarrow S$, we also say    {\it ``$R$ is a direct summand of $S$"} to mean that the map splits. This concept is important because many nice properties of rings pass to direct summands. 

\begin{ex}
Let $G$ be a finite group acting on a ring $S$. Let $S^G$ denote the ring of invariants, that is, the subring of elements of $S$ that are fixed by the action of $G$. 
The reader will easily show that  the map
$$\varphi:S\to S^G\,\,\,\,\,\,\, \varphi(s)=\frac{1}{|G|}\sum_{g\in G}g\cdot s$$
gives a splitting of $S^G\hookrightarrow S$, provided that  $|G|$ is invertible in $S$. \end{ex}

\begin{defi}
A ring $R$ of characteristic $p$ is  {\bf Frobenius split}  (or {\bf F-split}) if the Frobenius map splits. 
Explicitly, a reduced ring $R$ is Frobenius  split if the  ring inclusion  $R^p \hookrightarrow R$ splits as a map of $R^p$-modules. 
Equivalently,  a reduced ring $R$ is Frobenius split if there exists $\pi \in \Hom_{R}(R^{1/p}, R)$ such that $\pi(1) = 1$. 
\end{defi}

\begin{ex} \label{ex3}The ring $R = \mathbb F_p[x, y]$ is Frobenius split.   Indeed, we have seen that $R$ is free over the subring $R^p =  \mathbb F_p[x^p, y^p]$, with  basis
$\{x^ay^b\}$ where $ 0 \leq a, b < p$. Any projection onto the summand generated by the basis element $1 = x^0y^0$ gives a splitting.
\end{ex}

Frobenius splitting is a local condition on a ring{\footnote{But use caution: on a non-affine scheme the Frobenius  map can split locally at each point but not globally! For example, a smooth projective curve is locally Frobenius split, but not globally Frobenius split if the genus is greater than one; see Example \ref{curvesummary}. }}:
\begin{Lem} \label{loc} Let $R$ be any F-finite ring of prime characteristic. 
The locus of points $P $ in Spec $R$ such that $R_P$ is Frobenius split is an open set. In particular,   $R$ is Frobenius split if and only if for all maximal (respectively, prime) ideals $P$ in Spec $R$, the local ring $R_P$ is Frobenius split. \end{Lem}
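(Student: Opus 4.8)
The plan is to repackage Frobenius splitting as the surjectivity of a single $R$-linear map, producing an ideal $I$ whose vanishing set is exactly the non-split locus. By the definition of splitting applied to the Frobenius $F\colon R \to F_*R$, the ring $R$ is Frobenius split if and only if there is some $\phi \in \Hom_R(F_*R, R)$ with $\phi(1) = 1$. Let $\mathrm{ev}\colon \Hom_R(F_*R, R) \to R$ be the $R$-linear ``evaluation at $1$'' map $\phi \mapsto \phi(1)$, and put $I := \mathrm{im}(\mathrm{ev})$, which is an ideal of $R$ because $\mathrm{ev}$ is $R$-linear. Then $R$ is Frobenius split $\iff 1 \in I \iff I = R$. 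So the whole lemma reduces to the claim: for every prime $P$ of $R$, the local ring $R_P$ is Frobenius split $\iff I \nsubseteq P$.

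To establish that claim I would use the hypothesis of F-finiteness exactly once: it guarantees that $F_*R$ is a finitely generated—hence, $R$ being Noetherian, finitely presented—$R$-module, and therefore $\Hom$ commutes with localization, $\Hom_R(F_*R, R)_P \cong \Hom_{R_P}\!\big((F_*R)_P,\, R_P\big)$. A second, elementary point is that $(F_*R)_P \cong F_*(R_P)$ when $P$ is prime: localizing the $R$-module $F_*R$ at $R\setminus P$ amounts to inverting $\{s^p : s \notin P\}$ in the ring $R$, and since $s \notin P \iff s^p \notin P$ this localization is just $R_P$ with its Frobenius module structure. Combining the two, $\Hom_R(F_*R, R)_P \cong \Hom_{R_P}(F_*(R_P), R_P)$, and one checks that under this identification the localized map $\mathrm{ev}_P$ is precisely the evaluation-at-$1$ map for $R_P$. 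Since localization is exact, $I_P = \mathrm{im}(\mathrm{ev}_P)$; applying the first paragraph to $R_P$, this says $R_P$ is Frobenius split $\iff I_P = R_P \iff I \nsubseteq P$.

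It then follows that the Frobenius-split locus of $\Spec R$ is the complement of the closed set $V(I)$, hence open. For the concluding sentence: a proper ideal of $R$ is contained in some maximal ideal, and hence in some prime, while $R$ lies in none, so $I = R$ $\iff$ $I \nsubseteq P$ for all maximal (respectively prime) $P$ $\iff$ $R_P$ is Frobenius split for all maximal (respectively prime) $P$; combined with the first paragraph this is the stated equivalence. The one genuine obstacle is the commutation of $\Hom$ with localization—this is exactly where F-finiteness is indispensable—and it must be paired carefully with the identification $(F_*R)_P \cong F_*(R_P)$, so that $I_P$ genuinely is the analogously-defined ideal of the local ring $R_P$ rather than some unrelated module.
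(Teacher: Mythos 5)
Your proof is correct and is precisely the argument the paper has in mind: the paper leaves the lemma to the reader with the hint that it ``follows from the fact that $R^{1/p}$ is a finitely generated $R$-module,'' and you use exactly that fact (via finite presentation over a Noetherian ring) to commute $\Hom$ with localization, together with the observation $(F_*R)_P \cong F_*(R_P)$, to identify the Frobenius-split locus as the open complement of $V(I)$ for $I = \{\phi(1) : \phi \in \Hom_R(F_*R,R)\}$.
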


The proof of Lemma \ref{loc} is easy, following from the fact that $R^{1/p}$ is a finitely generated $R$ module, so we leave it to the reader.
Using Lemma \ref{loc},  it is not hard to prove the following generalization of Example \ref{ex3}:

\begin{Prop}\label{RegimpliesF-split} Every F-finite  regular ring is Frobenius split.
\end{Prop}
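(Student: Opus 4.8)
The plan is to reduce to the local case with Lemma~\ref{loc} and then, just as in Example~\ref{ex3}, produce the splitting by projecting onto the summand spanned by $1$ in a free $R^p$-basis of $R$ --- a basis whose existence is handed to us by Kunz's theorem (Theorem~\ref{kunzthm}).

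By Lemma~\ref{loc}, it is enough to check that $R_P$ is Frobenius split for every prime (equivalently maximal) ideal $P$ of $R$. Since the class of F-finite rings is closed under localization and every localization of a regular ring is regular, each $R_P$ is again an F-finite regular ring, now \emph{local}; so we may assume from the outset that $(R,\mathfrak{m})$ is an F-finite regular local ring and that our task is to split the inclusion $R^p \hookrightarrow R$ in the category of $R^p$-modules.

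First note that $R^p$ is itself local with maximal ideal $\mathfrak{m}\cap R^p$: if $a = r^p \in R^p$ lies outside $\mathfrak{m}$, then $r$ is a unit of the local ring $R$, so $r^{-1}\in R$ and $a^{-1} = (r^{-1})^p \in R^p$; hence every element of $R^p$ outside the proper ideal $\mathfrak{m}\cap R^p$ is a unit. By Kunz's theorem, $R$ is locally free as an $R^p$-module, and being finitely generated over the local ring $R^p$ it is in fact \emph{free}. Now $\mathfrak{m}\cap R^p \subseteq \mathfrak{m}$, so the extended ideal $(\mathfrak{m}\cap R^p)R$ is contained in $\mathfrak{m}$ and in particular does not contain $1$. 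Thus, writing $\kappa = R^p/(\mathfrak{m}\cap R^p)$ for the residue field, the image of $1$ in the finite-dimensional $\kappa$-vector space $R/(\mathfrak{m}\cap R^p)R$ is nonzero. Extend $\overline{1}$ to a $\kappa$-basis of this space and lift the basis to elements $1 = e_0, e_1, \dots, e_{r-1} \in R$; by Nakayama's lemma these generate $R$ over $R^p$, and since their number equals the rank of the free $R^p$-module $R$ they form a free $R^p$-basis of $R$. The $R^p$-linear projection of $R$ onto the free summand $R^p\cdot 1$ sends $1$ to $1$, so, identifying $R^p\cdot 1$ with $R^p$, it is the desired splitting of $R^p\hookrightarrow R$.

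Everything here is bookkeeping around the one substantive input, Kunz's theorem. The step that deserves the most care --- and the only place where something could go wrong --- is the passage from mere local freeness to an honest splitting: one must observe that $R^p$ is local (so that ``locally free'' upgrades to ``free''), and then that $1$, being a unit, lies outside the maximal ideal and is therefore part of a minimal generating set and so completes to a basis. After that, projecting onto the summand generated by $1$ is a splitting almost by definition, exactly as in Example~\ref{ex3}.
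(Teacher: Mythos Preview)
Your proof is correct and follows exactly the approach the paper sketches: reduce to the local case via Lemma~\ref{loc}, invoke Kunz's theorem to get freeness of $R$ over $R^p$, and project onto the summand spanned by $1$. The paper itself gives only a one-sentence indication of this idea (that regularity means $R^{1/p}$ decomposes completely into copies of $R$, so in particular contains one such copy), whereas you have carefully filled in the details---verifying that $R^p$ is local, that $1$ survives modulo the maximal ideal, and that it therefore extends to a free basis.
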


Indeed, for a local ring $(R, m)$, we can think of regularity as the condition that  the $R$-module $R^{1/p}$ decomposes completely into a direct sum of copies of $R$, whereas Frobenius splitting is the condition that $R^{1/p}$ contains {\it at least one direct sum copy\/} of $R$. 

The property of Frobenius splitting is passed on to direct summands: 

\begin{Prop}\label{split} Let $R \subset S$ be any inclusion of rings of characteristic $p$ which splits in the category of $R$-modules. If $S$ is Frobenius split, then $R$ is Frobenius split.
\end{Prop}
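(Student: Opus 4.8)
The plan is to build an explicit splitting of the Frobenius map of $R$ by composing the given splitting of $R\hookrightarrow S$ with the given Frobenius splitting of $S$, after passing to Frobenius pushforwards. Throughout, write the Frobenius map of a ring $A$ as $F_A\colon A\to F_*A$; recall that $A$ is Frobenius split precisely when there exists $\phi\in\Hom_A(F_*A,A)$ with $\phi(1)=1$.

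First I would unwind the two hypotheses: since $S$ is Frobenius split there is $\pi\in\Hom_S(F_*S,S)$ with $\pi(1)=1$, and since $R\hookrightarrow S$ splits over $R$ there is $\psi\in\Hom_R(S,R)$ with $\psi|_R=\operatorname{id}_R$, so in particular $\psi(1)=1$. Next I would observe that the inclusion $R\hookrightarrow S$ induces a map $\iota\colon F_*R\hookrightarrow F_*S$ that is $R$-linear: its underlying additive map is simply the inclusion $R\subseteq S$, and the $R$-module structure on each side is $r\cdot x=r^px$, so $\iota$ intertwines them. This, together with the fact that an $S$-linear map is automatically $R$-linear by restriction of scalars along $R\subseteq S$, is the only ``module-structure bookkeeping'' the argument requires.

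Then I would take the composite
$$
\phi\ :\quad F_*R\ \xrightarrow{\ \iota\ }\ F_*S\ \xrightarrow{\ \pi\ }\ S\ \xrightarrow{\ \psi\ }\ R .
$$
All three arrows are $R$-linear, hence $\phi\in\Hom_R(F_*R,R)$, and $\phi(1)=\psi(\pi(\iota(1)))=\psi(\pi(1))=\psi(1)=1$. By the criterion recalled above, this exhibits $R$ as Frobenius split.

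I do not anticipate a genuine obstacle; the whole content is in setting up the right composition and checking $R$-linearity of each map — there are no estimates or delicate computations. If one prefers to work with $p$-th roots in the reduced case, the identical composite reads $R^{1/p}\hookrightarrow S^{1/p}\xrightarrow{\pi}S\xrightarrow{\psi}R$, using the compatible inclusion $R^{1/p}\subseteq S^{1/p}$.
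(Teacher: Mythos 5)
Your proof is correct and is essentially the same as the paper's own argument, just phrased in the $F_*$-pushforward language rather than the $R^p\hookrightarrow R$ language: the composite $F_*R\hookrightarrow F_*S\xrightarrow{\ \pi\ }S\xrightarrow{\ \psi\ }R$ is exactly the paper's composite $R\hookrightarrow S\xrightarrow{\pi}S^p\xrightarrow{\phi^p}R^p$ after translating notation. The bookkeeping you carry out (that $\iota$ and the restriction of $\pi$ are $R$-linear, and that the composite sends $1\mapsto 1$) is precisely what the paper's commutative diagram and the remark about $\phi^p$ encode.
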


\begin{proof} We have a commutative diagram:
$$
\xymatrix{
R   \ar@{^{(}->}[r]   & S   \\
R^p  \ar@{^{(}->}[u] \ar@{^{(}->}[r] &   S^p \ar@{^{(}->}[u]
}
$$
If we denote the splitting of $R \hookrightarrow S$ by $\phi$, then the map $R^p  \hookrightarrow
S^p$ is also split, by the map  $\phi^p$ defined by taking the $p$-th powers of everything. Our assumption that $S$ is Frobenius split amounts to the existence of  an $S^p$-linear  map $\pi: S \rightarrow S^p$ sending $1$ to $1$. The composition $\phi^p \circ \pi$, when restricted to $R$, gives an $R^p$-linear map from $R$ to $R^p$ sending $1$ to $ 1$. Thus $R$ is also Frobenius split.
\end{proof}

With Property \ref{split} in hand, it is easy to construct examples of Frobenius split rings which are not regular: a direct summand of a regular ring is always Frobenius split but not usually regular. For example,

\begin{ex}\label{ex4} For any graded ring $R = \bigoplus_{n\in \mathbb N} R_n$, the inclusion of any Veronese subring 
$$R^{(d)} =  \bigoplus_{n\in \mathbb N} R_{dn} \hookrightarrow R$$ splits. So 
a Veronese subring of a polynomial ring is Frobenius split in any characteristic,  although such a subring is rarely regular. For example, $\frac{k[x, y, z]}{(xz - y^2)} \cong   k[u^2, uv, v^2] \subset k[u, v]$ is the second Veronese subring of a polynomial ring, hence Frobenius split in every characteristic (but never regular). 
\end{ex}

\begin{Rmk} 
Frobenius splitting was first systematically studied by Hochster and Roberts in the 1970s. See  \cite{HochsterRobertsInvSubring}, \cite{HochsterRobertsFPurityLC}. 
The term {\it  Frobenius split} however, was introduced a decade later by Mehta and Ramanathan, in their  beautiful paper \cite{Mehta-Ramanathan} which interpreted many of these ideas in a projective setting. Hochster and Roberts actually introduced a slightly more technical notion called {\it F-purity}, which (as they show) is equivalent to Frobenius splitting  under the F-finite hypothesis. For non-F-finite rings, the notion of  F-purity  is {\it a priori\/} weaker than Frobenius splitting; however,  we do not know a single (excellent) example of an F-pure ring which is not Frobenius split. 
\end{Rmk}

\subsection{Iterations of Frobenius and F-regularity.}
The real power of Frobenius emerges when we iterate it.
The composition of the Frobenius map with itself is obviously a ring homomorphism sending each element $r$ to $(r^p)^p = r^{p^2}$. More generally, 
 for each natural number $e$,  the iteration of Frobenius $e$ times is the ring homomorphism
$$F^e: R \rightarrow R \,\,\,\,\,\,\,\,\,\,\,\, r \mapsto r^{p^e}.$$ The images of each of these iterates 
 produces an infinite descending chain of subrings
$$
R \supset R^p \supset R^{p^2} \supset  R^{p^3}  \supset \dots
$$
The original ring $R$ can be viewed  as a  module over each of these subrings $R^{p^e}.$ Indeed, assuming that $R$ is F-finite, then also  $R$ is  finitely generated as an $R^{p^e}$-module for each $e$. 
Again, understanding the $R^{p^e}$-module structure of the $R^{p^e}$-module $R$ is essentially the same as understanding the $R$-module structure of the $R$-module $R^{1/p^e}$ 
(or $F^e_*R$). 

If $R$ is F-finite, Kunz's theorem implies that  $R$ is regular if and only if  the $R$-modules $R^{1/p^e}$ are all locally free. Classes of  ``F-singularities" can be defined depending on the extent to which the $R^{1/p^e}$  fail to be locally free.  The first of these are the {\it F-regular} rings, which have many direct sum copies of $R$ in $R^{1/p^e}$ as $e$ gets larger:

\begin{defi}\label{def:Freg}\cite{HochsterHunekeTCStrongFRegularity}
An F-finite domain $R$ is  {\bf strongly F-regular}\footnote{For brevity, we often drop the qualifier ``strongly" in the text.  Hochster and Huneke  introduced three flavors of F-regularity---weak F-regularity, F-regularity and strong F-regularity--- and conjectured all to be equivalent. This is still not known in general; however it is known for Gorenstein rings  \cite{HochsterHunekeTCStrongFRegularity}, graded rings 
 \cite{LyubeznikSmithStrongWeakFregularityEquivalentforGraded}, and  even more generally; Cf \cite{AberbachConditionsWeakStrongFRegularity}, \cite{MacCrimmonThesis}. In any case, because our bias is that strong F-regularity is the central notion and these lectures treat only that notion,   we drop the clumsy modifier and frequently write ``F-regular" instead of strongly F-regular, begging forgiveness of Hochster and Huneke. } if for every non-zero element $f \in R$, there exists $e \in \mathbb N$
such that the $R$-module inclusion $Rf^{1/p^e} \hookrightarrow R^{1/p^e}$ splits. Put differently, this means that for all non-zero $f$, there exists
$e \in \mathbb N$ 
 and $\phi \in \Hom_{R}(R^{1/p^e}, R)$ such that $\phi(f^{1/p^e}) = 1$.
\end{defi}

An F-regular ring therefore, may not be free when considered as a module over $R^{p^e}$, but it will have  {\it many } summands isomorphic to $R^{p^e}.$  Indeed, every non-zero element of $R$ will generate an $R^{p^e}$-module direct summand of $R$ for sufficiently large $e$.

\begin{Prop}\label{FregimpliesFsplit}
Every $F$-regular ring is Frobenius split.
\end{Prop}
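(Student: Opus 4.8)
The plan is to extract, straight from the definition of strong F\nobreakdash-regularity, a single $R$\nobreakdash-linear map $\pi\colon R^{1/p}\to R$ with $\pi(1)=1$ -- which is exactly what Frobenius splitting asks for. Definition \ref{def:Freg} only promises, for a given nonzero $f$, a map $\phi\colon R^{1/p^e}\to R$ with $\phi(f^{1/p^e})=1$ for \emph{some} (possibly large) $e$, so there are two routine adjustments to make: (i) upgrade such a $\phi$ to one sending $1\mapsto 1$, so that the inclusion $R\hookrightarrow R^{1/p^e}$ genuinely splits; and (ii) descend from the $e$\nobreakdash-th iterate to the first iterate, using the chain of $R$\nobreakdash-submodules $R\subseteq R^{1/p}\subseteq R^{1/p^2}\subseteq\cdots\subseteq R^{1/p^e}$.

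For step (ii), the point is simply that $R^{1/p}$ is an $R$\nobreakdash-submodule of $R^{1/p^e}$ containing $1$, so the restriction to $R^{1/p}$ of any $\phi\in\Hom_R(R^{1/p^e},R)$ with $\phi(1)=1$ is again an $R$\nobreakdash-linear map sending $1$ to $1$. Hence it is enough to split $R\hookrightarrow R^{1/p^e}$ for some $e\ge 1$. If $R$ is a field this is immediate, since a field is regular and therefore Frobenius split by Proposition \ref{RegimpliesF-split}; otherwise fix a nonzero non-unit $f\in R$. Definition \ref{def:Freg} gives $e\in\N$ and $\psi\in\Hom_R(R^{1/p^e},R)$ with $\psi(f^{1/p^e})=1$, and necessarily $e\ge 1$: if $e=0$ then $R^{1/p^0}=R$ and $R$\nobreakdash-linearity would force $1=\psi(f)=f\,\psi(1)$, making $f$ a unit. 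Now for step (i), multiplication by the fixed element $f^{1/p^e}\in R^{1/p^e}$ is an $R$\nobreakdash-linear endomorphism of $R^{1/p^e}$, so $\phi:=\psi(f^{1/p^e}\cdot{-})$ lies in $\Hom_R(R^{1/p^e},R)$ and $\phi(1)=\psi(f^{1/p^e})=1$. Restricting this $\phi$ to $R^{1/p}$ yields the desired $\pi$, and $R$ is Frobenius split.

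The only ``obstacle'' is the bookkeeping described above, and it is a very mild one: converting a splitting of $Rf^{1/p^e}\hookrightarrow R^{1/p^e}$ (for \emph{some} $f$ and \emph{some} $e$) into a splitting of the single map $R\hookrightarrow R^{1/p}$, together with the trivial-looking but genuinely needed check that one may take $e\ge 1$ -- which is why the field case is peeled off first. No substantive input is required: in particular F\nobreakdash-finiteness plays no role beyond what is already built into the definition of strong F\nobreakdash-regularity, and one never needs to know that the relevant exponent $e$ can be chosen uniformly.
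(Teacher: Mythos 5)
Your proof is correct, and its core step---restricting a splitting of $R\hookrightarrow R^{1/p^e}$ to the submodule $R^{1/p}$---is exactly the paper's argument. Where you go further is in being careful about the exponent. The paper simply takes $f=1$ in Definition \ref{def:Freg} and asserts a splitting of $R\hookrightarrow R^{1/p^e}$ for \emph{some} $e$; but if $0\in\N$ (as it is elsewhere in the paper, e.g.\ in the section rings $\bigoplus_{n\in\N}H^0(X,\mathcal{L}^n)$), the $f=1$ instance of the definition is vacuously satisfied by $e=0$, and the phrase ``restricting to the subring $R^{1/p}$'' is then meaningless since $R^{1/p}\not\subset R$. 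Your two adjustments---precomposing $\psi$ with multiplication by $f^{1/p^e}$ to convert $\psi(f^{1/p^e})=1$ into $\phi(1)=1$, and choosing a non-unit $f$ (peeling off the trivial field case via Proposition \ref{RegimpliesF-split}) so that $R$-linearity forces $e\geq 1$---plug this small gap cleanly. So you are not taking a different route, but you are supplying the bookkeeping that the published proof elides; the paper's version is fine under the convention $e\geq 1$, and yours is fine without it.
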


\begin{proof} Taking $f $ to be $1$, we know that there exists an $e$ such that  $R\hookrightarrow R^{1/p^e}$ splits. Restricting the splitting $\phi: R^{1/p^e}\rightarrow R $ to the subring $R^{1/p},$  we have a splitting of the inclusion $R^{1/p} \hookrightarrow R$. Thus $R$ is Frobenius split.\end{proof}

The proof of the following lemma is a  straight-forward exercise, using the fact that $R^{1/p^e}$ is a finitely generated $R$-module.   
\begin{lemma}\label{Fregprop} \cite[3.1,3.2]{HochsterHunekeTCStrongFRegularity} Let $R$ be an F-finite ring of characteristic $p$.
\begin{enumerate}
\item  $R$ is F-regular  if and only if $R_m$ is F-regular   for every maximal (equivalently, prime) ideal $m $ of $R$. 
\item A  local ring $(R, m)$ is F-regular if and only if the completion $\hat R$ of $R$ at its maximal ideal is F-regular.
\end{enumerate}

\end{lemma}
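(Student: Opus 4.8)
The plan is to reduce both statements to the single key fact that $R^{1/p^e}$ is a finitely generated $R$-module, which lets Hom commute with localization and completion. For part (1), I would first observe that if $R$ is F-regular, then for any multiplicative set $W$ and any nonzero $f/1 \in W^{-1}R$ we may clear denominators: it suffices to split $R(cf)^{1/p^e} \hookrightarrow R^{1/p^e}$ for a suitable nonzero $c \in R$, which F-regularity of $R$ provides, and then localize the splitting map. Here one uses that $\Hom_R(R^{1/p^e}, R) \otimes_R W^{-1}R \cong \Hom_{W^{-1}R}((W^{-1}R)^{1/p^e}, W^{-1}R)$ because $R^{1/p^e}$ is finitely presented over $R$; this identification sends a splitting over $R$ to a splitting over the localization, so $W^{-1}R$ is F-regular. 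The converse direction — F-regular at every maximal ideal implies F-regular — is the part requiring a genuine idea: given nonzero $f \in R$, for each maximal ideal $m$ there is some $e_m$ and a map $\phi_m \in \Hom_{R_m}(R_m^{1/p^{e_m}}, R_m)$ with $\phi_m(f^{1/p^{e_m}}) = 1$. The subtlety is that $e_m$ could a priori depend on $m$, so one cannot directly patch. I would handle this by a Noetherian/quasi-compactness argument: the set of $e$ for which the inclusion $R f^{1/p^e} \hookrightarrow R^{1/p^e}$ splits is upward-closed in $e$ (compose a splitting at level $e$ with the natural inclusion coming from $R^{1/p^e} \hookrightarrow R^{1/p^{e+1}}$, or rather use that splitting at level $e$ forces splitting at all larger levels — this monotonicity is the technical crux), and then the locus of primes where splitting holds at level $e$ is open; these opens cover $\Spec R$, so by quasi-compactness finitely many suffice, and taking $e$ to be the max gives a splitting at a single level locally everywhere. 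A single level-$e$ map that is a splitting after localizing at every maximal ideal is a splitting globally, since being an isomorphism (or here, the relevant cokernel vanishing / the element $1$ being in the image) is a local property for finitely generated modules.

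For part (2), I would argue that completion $R \to \hat R$ at the maximal ideal is faithfully flat with $\hat R^{1/p^e} \cong \hat R \otimes_R R^{1/p^e}$ (again using F-finiteness, so $R^{1/p^e}$ is a finite $R$-module and completion commutes with the relevant constructions), whence $\Hom_R(R^{1/p^e}, R) \otimes_R \hat R \cong \Hom_{\hat R}(\hat R^{1/p^e}, \hat R)$. If $R$ is F-regular, then for nonzero $f \in R$ a splitting $\phi$ with $\phi(f^{1/p^e}) = 1$ base-changes to one over $\hat R$; but one must also check arbitrary nonzero elements of $\hat R$, not just those from $R$ — here I would invoke that $R \to \hat R$ is flat with regular (or at least nice) fibers, or more simply reduce to checking the single element $f = $ a generator of a suitable "test" locus, though the cleanest route is the known characterization that F-regularity can be tested on a single nonzero element $c$ for which $R_c$ is F-regular (a test element). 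Conversely, if $\hat R$ is F-regular then for nonzero $f \in R$ we get a splitting over $\hat R$, and by the Hom-commutes-with-completion identification together with faithful flatness (the condition "$1 \in \phi(Rf^{1/p^e})$" descends), we get a splitting over $R$.

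The main obstacle I anticipate is the \emph{uniformity of $e$} in the local-to-global direction of part (1): making rigorous that splitting at level $e$ implies splitting at all higher levels (so that the splitting loci are nested and one can take a maximum over a finite cover), and that this locus is Zariski-open. Both follow from $R^{1/p^e}$ being a finitely generated $R$-module — openness because the cokernel of a map of finite modules has closed support, and the nesting because a level-$e$ splitting $\phi$ composed with $R^{1/p^{e+1}} \twoheadrightarrow$ (summand) $\cong R^{1/p^e} \xrightarrow{\phi} R$ appropriately sends $f^{1/p^{e+1}}$ to a unit — but spelling this out carefully is exactly the "straightforward exercise" the text alludes to, and it is where all the real content sits. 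The completion statement in part (2) is comparatively routine given faithful flatness, modulo the point about testing on all nonzero elements, which is cleanest to finesse via a test element argument if one is available at this stage, or otherwise by a direct limit/Noetherian argument on $\hat R$.
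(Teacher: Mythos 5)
The paper does not actually supply a proof of this lemma; it declares the proof ``a straight-forward exercise, using the fact that $R^{1/p^e}$ is a finitely generated $R$-module'' and cites Hochster--Huneke. So the comparison is really with the standard Hochster--Huneke argument, and against that your outline has the right overall shape, but two of the points you flag as loose ends deserve more than a wave.

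On the monotonicity-in-$e$ step for part (1): you describe the level-$(e{+}1)$ splitting as ``$\phi$ composed with $R^{1/p^{e+1}} \twoheadrightarrow$ (summand) $\cong R^{1/p^e} \xrightarrow{\phi} R$.'' As stated this does not work, because a projection of $R^{1/p^{e+1}}$ onto a direct summand isomorphic to $R^{1/p^e}$ has no reason to carry $f^{1/p^{e+1}}$ to $f^{1/p^e}$ (generically $f^{1/p^{e+1}} \notin R^{1/p^e}$ at all). The correct mechanism uses a Frobenius splitting: since $R_m$ is F-regular at every $m$, each $R_m$ is Frobenius split, so by the openness lemma in Section~1 the ring $R$ is Frobenius split, say via $\pi \in \Hom_R(R^{1/p},R)$ with $\pi(1)=1$. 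Then given $\phi$ at level $e$ with $\phi(f^{1/p^e})=1$, form $\phi^{1/p} \in \Hom_{R^{1/p}}(R^{1/p^{e+1}}, R^{1/p})$ by taking $p$-th roots, and set $\psi = \pi \circ \phi^{1/p}$; one checks $\psi(f^{1/p^{e+1}}) = \pi(1) = 1$. With this, the ascending open loci $U_e = \operatorname{Spec} R \setminus V(I_e)$, where $I_e$ is the image of the evaluation map $\Hom_R(R^{1/p^e},R)\to R$ at $f^{1/p^e}$, really are nested, quasi-compactness gives a single $e$ with $I_e = R$, and your argument closes.

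On part (2), you correctly identify that going from $R$ F-regular to $\hat R$ F-regular requires testing an \emph{arbitrary} nonzero $\hat f \in \hat R$, which need not come from $R$. This is not a cosmetic gap: I see no way to finesse it purely with ``Hom commutes with completion for finite modules,'' because one cannot in general approximate $\hat f$ by an $f\in R$ in a way compatible with a uniformly chosen $e$. The standard resolution is exactly the one you name --- completely stable test elements --- and this is precisely the ``important lemma of Hochster and Huneke'' that the paper itself later invokes (in the discussion preceding Theorem \ref{testelement} and again when constructing the test ideal). In other words, this direction is not really an elementary exercise; it rides on the existence of test elements, and your honesty about that is appropriate. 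The reverse direction of (2) (descent along $R\to\hat R$ by faithful flatness, using $\Hom_{\hat R}(\hat R^{1/p^e},\hat R)\cong \Hom_R(R^{1/p^e},R)\otimes_R\hat R$) is correct as you wrote it.

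So: the localization direction of (1), the quasi-compactness/openness strategy for its converse (once the splitting mechanism is repaired as above), and the descent direction of (2) are all sound and in the spirit the paper intends. The passage from $R$ to $\hat R$ in (2) genuinely needs the test-element input; flagging it as such is the right call, but be aware that this makes the lemma less of a ``straightforward exercise'' than the text's phrasing suggests.
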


\begin{Prop}\label{RegimpliesFreg}
An F-finite regular  domain{\footnote{An arbitrary F-finite ring (not a domain) can be defined as strongly F-regular if for every $f$ not in any minimal prime, there exists an $e$ and an $\phi \in Hom_{R}(F^e_*R, R)$ such that $\phi(f) =1$.  However, this is not an essential generalization of the theory:  it is easy to check that an F-regular ring is a product of F-regular domains; see \cite{HochsterHunekeTCStrongFRegularity}.}} is strongly F-regular.
\end{Prop}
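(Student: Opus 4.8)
The plan is to reduce to the regular \emph{local} case and then combine Kunz's theorem with the Krull intersection theorem. By Lemma \ref{Fregprop}(1) it suffices to show that $R_\fm$ is strongly F-regular for every maximal ideal $\fm$ of $R$, and since a localization of an F-finite regular domain is again an F-finite regular domain, I may assume from the outset that $(R,\fm)$ is an F-finite regular local domain (in particular Noetherian). Fix a regular system of parameters $x_1,\dots,x_n$, so that $\fm=(x_1,\dots,x_n)$, and for $e\in\N$ set $\fm^{[p^e]}:=(x_1^{p^e},\dots,x_n^{p^e})$.

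The first thing I would record is a dictionary between the two sides of Definition \ref{def:Freg}: for $f\in R$ and $e\in\N$,
\[ f^{1/p^e}\in \fm\, R^{1/p^e} \quad\Longleftrightarrow\quad f\in \fm^{[p^e]}. \]
This is a ``freshman's dream'' computation using that $p^e$-th roots are unique in the domain $R^{1/p^e}$; equivalently, the ring isomorphism $R^{1/p^e}\xrightarrow{\ \sim\ } R$, $z\mapsto z^{p^e}$, carries the ideal $\fm R^{1/p^e}$ onto $\fm^{[p^e]}$. The second ingredient is Kunz's theorem (Theorem \ref{kunzthm}): applied to $R$ it says $R^{1/p}$ is a finitely generated free $R$-module, and since $R^{1/p}\cong R$ is again an F-finite regular local domain, iterating shows $R^{1/p^e}$ is a finitely generated free $R$-module for every $e$.

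Now fix a nonzero $f\in R$. Since $\fm^{[p^e]}\subseteq \fm^{p^e}$ and $\bigcap_{N}\fm^N=0$ by the Krull intersection theorem, there is some $e$ with $f\notin \fm^{p^e}$, hence $f\notin \fm^{[p^e]}$, hence $f^{1/p^e}\notin \fm R^{1/p^e}$ by the dictionary. Thus $f^{1/p^e}$ is a minimal generator of the finitely generated free $R$-module $R^{1/p^e}$, so by Nakayama's lemma it extends to a free basis; the coordinate projection $\phi\colon R^{1/p^e}\to R$ onto the summand $Rf^{1/p^e}$ then satisfies $\phi(f^{1/p^e})=1$. By Definition \ref{def:Freg}, $R$ is strongly F-regular.

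I expect no serious obstacle: the argument is routine once set up this way. The only step requiring a little care is the dictionary relating $\fm R^{1/p^e}$ to $\fm^{[p^e]}$ (one must remember to raise to the $p^e$-th power and use that $R^{1/p^e}$ is a domain), together with the conceptual observation that strong F-regularity of a regular local ring is just the elementary fact that every nonzero element eventually escapes all Frobenius powers of the maximal ideal.
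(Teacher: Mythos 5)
Your proof is correct and takes essentially the same approach as the paper: reduce to the local case via Lemma \ref{Fregprop}, use $\bigcap_e \fm^{[p^e]}=0$ (via Krull intersection) and the dictionary $f^{1/p^e}\in\fm R^{1/p^e}\iff f\in\fm^{[p^e]}$ to find an $e$ with $f^{1/p^e}$ a minimal generator of $R^{1/p^e}$, then invoke freeness of $R^{1/p^e}$ (Kunz) and Nakayama to promote it to a free basis element whose coordinate projection gives the required splitting. The only cosmetic difference is that you make the Kunz/freeness and the root--power dictionary steps explicit where the paper leaves them implicit.
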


\begin{proof}Since F-regularity can be checked locally at each prime (Lemma  \ref{Fregprop}), there is no loss of generality in assuming that $(R, m)$ is local.

The proof is a simple application of Nakayama's Lemma  \cite[Proposition 2.8]{AtiyahMacdonald}.
What does Nakayama's Lemma  say about the finitely generated $R$-module $M = R^{1/p^e}$?
It says that an element $f^{1/p^e}$ is part of a minimal generating set of $R^{1/p^e}$ as an $R$-module if and only if it is not in $mR^{1/p^e}$, which in turn happens if and only if $f $ is not in the ideal $m^{[p^e]}$ in $R$, where 
$m^{[p^e]}$ denotes the ideal of $R$ generated by the $p^e$-th powers of the elements of $m$. 
 Since $\bigcap_e m^{[p^e]} \subset \bigcap_e m^e = 0$,  this means that for each fixed 
 non-zero $f \in R$, we can always find an $e$ such that $f^{1/p^e}$ is a part of a set of minimal generators for $R^{1/p^e}$ over $R$. This observation holds quite generally, whether or not $R$ is regular.

Now if $R$ is  regular, then $R^{1/p^e}$ is a free $R$-module, so that such a minimal generator $f^{1/p^e}$ for $R^{1/p^e}$ over $R$ will necessary be part of a free basis for $R^{1/p^e}$ over $R$. This means that $f^{1/p^e}$ spans a free $R$-module summand of   $R^{1/p^e}$.  Since this holds for every non-zero $f$ (with possibly larger $e$), we conclude that $R$ is F-regular.
\end{proof}

The analog of Proposition \ref{split} holds for F-regular rings, with essentially the same proof:
\begin{Prop}
\label{direct summand of F-regular is F-regular}
Let $R \subset S$ be any inclusion of rings of characteristic $p$ which splits in the category of $R$-modules. If $S$ is F-regular, then $R$ is F-regular. \end{Prop}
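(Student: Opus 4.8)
The plan is to imitate the proof of Proposition~\ref{split} almost verbatim, with the single Frobenius map replaced by a suitably high iterate $F^e$. Fix a non-zero element $f\in R$. Since $R\subset S$ and $S$ is a domain, $f$ is also non-zero in $S$, so the strong F-regularity of $S$ supplies an integer $e$ and an $S$-linear map $\pi\colon S^{1/p^e}\to S$ with $\pi(f^{1/p^e})=1$. The aim is to build, out of $\pi$ together with the given $R$-module splitting $\phi\colon S\to R$ of $R\hookrightarrow S$, an $R$-linear map $R^{1/p^e}\to R$ carrying $f^{1/p^e}$ to $1$; once this is done for every non-zero $f$, strong F-regularity of $R$ follows straight from Definition~\ref{def:Freg}.

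Concretely, I would form the composite
$$R^{1/p^e}\;\hookrightarrow\;S^{1/p^e}\;\xrightarrow{\ \pi\ }\;S\;\xrightarrow{\ \phi\ }\;R,$$
where the first map is the inclusion induced by $R\subset S$; here it is harmless to write $f^{1/p^e}$ for an element of both $R^{1/p^e}$ and $S^{1/p^e}$, since the $p^e$-th root of $f\in R$ is unique inside an algebraic closure of the fraction field of $S$. As $\pi$ is $S$-linear it is in particular $R$-linear, and $\phi$ is $R$-linear by hypothesis, so $\psi:=\phi\circ\pi|_{R^{1/p^e}}\colon R^{1/p^e}\to R$ is a homomorphism of $R$-modules. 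Evaluating on $f^{1/p^e}$ gives $\psi(f^{1/p^e})=\phi(\pi(f^{1/p^e}))=\phi(1)=1$. Hence every non-zero $f\in R$ admits the splitting required in Definition~\ref{def:Freg}, and $R$ is strongly F-regular.

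I do not expect a genuine obstacle here: the argument is purely formal, and---unlike in the Frobenius-splitting case of Proposition~\ref{split}---one need not even pass to $p^e$-th roots of $\phi$, because $\pi$ already lands in $S$ rather than in $S^{1/p^e}$, so post-composing with $\phi$ suffices directly. The one thing to keep straight is the bookkeeping of hypotheses: strong F-regularity is only defined for F-finite domains, so one tacitly assumes $R$ is F-finite. This is a mild assumption that holds automatically in the settings of interest (for instance when $R$ and $S$ are finitely generated algebras over a perfect field), and it does not interfere with the construction above.
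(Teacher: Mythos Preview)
Your proof is correct and is essentially the adaptation the paper has in mind when it says the proposition ``holds for F-regular rings, with essentially the same proof'' as Proposition~\ref{split}. The only cosmetic difference is that the paper's proof of Proposition~\ref{split} works in the $R^{p}\subset R$ notation (so the splitting $\phi$ must be replaced by $\phi^{p}$ before composing), whereas you work in the $R\subset R^{1/p^e}$ notation, where $\pi$ already lands in $S$ and you can post-compose with $\phi$ directly; these are the same argument under the standard identification.
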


 In particular, Veronese subrings of a polynomial ring are F-regular,  as are rings of invariants of finite groups whose order is coprime to the characteristic. Cf Example \ref{ex4}.

The power of  Proposition \ref{direct summand of F-regular is F-regular} stems from the nice properties of F-regular rings: 
\begin{Thm}\label{FregimpliesCM}\cite{HochsterHunekeTCStrongFRegularity} 
F-regular rings are Cohen-Macaulay and normal.
\end{Thm}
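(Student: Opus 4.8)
The plan is to prove normality first, then Cohen-Macaulayness, both by exploiting the defining splitting property of F-regularity together with the fact that our ring is F-finite (hence excellent, so normality and Cohen-Macaulayness can be checked on completions). Since both properties are local, I would reduce immediately to the case of a local F-regular domain $(R, m)$, and then—using Lemma \ref{Fregprop}—pass to the completion $\hat R$, which is again F-regular. Working in the complete local setting lets me use the structure theory (Cohen's theorem) and duality machinery freely.

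For \emph{normality}: $R$ is already a domain, so I must show $R$ is integrally closed in its fraction field. Suppose $x = a/b$ (with $a, b \in R$, $b \ne 0$) is integral over $R$; then $x$ lies in a finitely generated $R$-submodule of the fraction field, so $b^{1/p^e} x^{1/p^e}$-type manipulations show that $a$ lies in the integral closure relations giving $a^{p^e} \in (b^{p^e})$ up to a fixed multiplier, hence $a^{1/p^e}/b^{1/p^e}$ is, after multiplying by a fixed nonzero $c$, an element of $R^{1/p^e}$ for all $e$. Concretely: there is a fixed nonzero $c \in R$ with $c x^n \in R$ for all $n \ge 0$, hence $c^{1/p^e} (a/b) \in R^{1/p^e}$ (viewing $a/b \in \mathrm{Frac}(R)^{1/p^e}$) for all $e$. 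Now apply F-regularity to $f = c$: for suitable $e$ there is $\phi \in \Hom_R(R^{1/p^e}, R)$ with $\phi(c^{1/p^e}) = 1$. Applying $\phi$ to the element $c^{1/p^e}\cdot(a/b)$ (which we just saw lies in $R^{1/p^e}$) and using $R$-linearity pulls $a/b$ back into $R$. The point is that F-regularity forces the ``trivial module'' to be pure in $R^{1/p^e}$ after twisting by any nonzero $c$, which is exactly enough to kill the denominator.

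For \emph{Cohen-Macaulayness}: this is the deeper statement. I would show that every system of parameters in $(R, m)$ (complete, F-regular, local) is a regular sequence. Suppose $x_1, \dots, x_d$ is a system of parameters and $r x_{k} \in (x_1, \dots, x_{k-1})$ for some $r \notin (x_1,\dots,x_{k-1})$; I want a contradiction. Taking $p^e$-th powers and using F-finiteness, one gets $r^{p^e} x_k^{p^e} \in (x_1^{p^e}, \dots, x_{k-1}^{p^e})$, i.e. $r^{1/p^e}$ (now in $R^{1/p^e}$) lies, after clearing, in the colon ideal for the sequence $(x_1,\dots,x_k)$ in $R^{1/p^e}$. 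The strategy is to produce a single nonzero multiplier $c \in R$ (a ``test element'': one exists precisely because $R$ is F-finite and, say, has a module-finite Frobenius, or because the Jacobian ideal / a suitable localization is regular) such that the would-be non-regularity relation, after multiplying by $c$ and extracting $p^e$-th roots, gives $c^{1/p^e} \cdot (\text{honest relation}) = 0$ inside $R^{1/p^e}$ for all $e$; then splitting off $c^{1/p^e}$ via F-regularity as above forces the original relation to be trivial, contradicting $r \notin (x_1,\dots,x_{k-1})$. Equivalently and more cleanly: one shows the local cohomology module $H^i_m(R) = 0$ for $i < d$ by showing any element is killed after twisting by $c^{1/p^e}$ and then splitting.

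The main obstacle is the Cohen-Macaulay half—specifically, producing and using a uniform multiplier $c$ that works simultaneously for all the parameter-colon relations and for all $e$. This is the technical heart of tight closure theory (existence of test elements), and in a truly self-contained treatment one would either invoke the standard fact that an F-finite reduced ring has a test element (a nonzero $c$ with $c R^{1/p^e}$ mapping onto a summand for all $e$ at once), or reorganize the argument to use the F-regular splitting directly on each relation separately—acceptable here since F-regularity gives, for \emph{each} fixed nonzero $c$, \emph{some} $e$ with a splitting of $Rc^{1/p^e} \hookrightarrow R^{1/p^e}$. I would structure the proof so that each colon relation supplies its own $c$ and its own $e$, thereby sidestepping the need for a uniform test element while still getting the contradiction relation by relation.
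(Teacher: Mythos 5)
Your normality argument is essentially the paper's: find a fixed multiplier $c$ coming from module-finiteness of the integral closure, take $p^e$-th roots to land $c^{1/p^e}(a/b)$ in $R^{1/p^e}$, then use F-regularity to split off $c^{1/p^e}$. That part is fine.

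The Cohen--Macaulay half has a genuine gap, and you diagnose it yourself but do not close it. After raising the bad relation $rx_k\in(x_1,\dots,x_{k-1})R$ to $p^e$-th powers, what you need is a \emph{single} nonzero $c$ (depending on the relation but not on $e$) with
\[
c\,r^{p^e}\in(x_1^{p^e},\dots,x_{k-1}^{p^e})R\quad\text{for all }e\gg 0,
\]
so that taking $p^e$-th roots puts $c^{1/p^e}r$ in $(x_1,\dots,x_{k-1})R^{1/p^e}$ and the splitting of $Rc^{1/p^e}\hookrightarrow R^{1/p^e}$ finishes it. You gesture at ``test elements,'' ``module-finite Frobenius,'' or ``Jacobian ideals,'' but none of this is supplied, and invoking test-element theory here would be circular or far heavier than needed. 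The paper produces the multiplier concretely: pass to the completion, use the Cohen Structure Theorem to realize $R$ as module-finite over the regular subring $A=K[[x_1,\dots,x_d]]$ built on the given system of parameters, and form the intermediate ring $B=A[r]\cong A[t]/(g(t))$. Since $B$ is a hypersurface over a regular ring, it is Cohen--Macaulay, and $x_1,\dots,x_d$ is a regular sequence \emph{in $B$}. Choosing any $B$-linear $\psi\colon R\to B$ with $\psi(1)=b\neq 0$ (such $\psi$ exists because $R$ is finite and torsion-free over the domain $B$) and applying $\psi$ to the $p^e$-th power of the relation lands it in $B$, where Cohen--Macaulayness lets you divide by $x_k^{p^e}$ to get $b\,r^{p^e}\in(x_1^{p^e},\dots,x_{k-1}^{p^e})B\subset(x_1^{p^e},\dots,x_{k-1}^{p^e})R$. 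That single element $b$ is exactly the multiplier your sketch is missing. The rest of your outline then goes through; the intermediate hypersurface ring is the device that does, by hand, what you were hoping a black-box test element would do.
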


For those not already enamored by Cohen-Macaulay singularities, we have included an appendix discussing this crucially important if slightly technical condition.

\begin{proof} 
Without loss of generality, we can assume that $(R, m)$ is an  F-regular local domain. First we  prove that $R$ is Cohen-Macaulay, {\it i.e.} each system of parameters of $R$ is a regular sequence. By Lemma \ref{Fregprop}, we can assume that $R$ is complete, and therefore has a  coefficient field,  $K$. 
By the Cohen Structure Theorem, $R$ is  module finite over the subring $A$ of  formal power series  over $K$ in any system of parameters  $x_1,\dots,x_d$.

Suppose we have a relation on our system of parameters, $x_iz \in (x_1, \dots, x_{i-1})R$. Let $B$ be the intermediate ring generated by $z$ over $A$. 
Note that 
  $B\cong A[t]/(g(t)))$, where $g$ is the minimal polynomial of $z\in B$ over $A$, so that $B$ is a hypersurface ring and in particular, Cohen-Macaulay . To summarize, we have  module finite extensions 
  $$
  A = K[[x_1, \dots, x_d]] \hookrightarrow B  = A[z]    \hookrightarrow R,
  $$ where $B$ is  Cohen-Macaulay with regular sequence
   $x_1, \dots, x_d$. 
  

 Since $B \hookrightarrow R$ is finite, there is a $B$-linear map $\psi:R \rightarrow B$ sending $1$ to, say, $b\neq 0$.  Raising our relation $zx_i \in (x_1, \dots, x_{i-1})R$ to the $p^e$-th power, we have $z^{p^e}x^{p^e}_i\in (x^{p^e}_1,\dots,x^{p^e}_{i-1})R$. 
Applying $\psi$ to this relation, we have   $ bz^{p^e}x^{p^e}_i\in (x^{p^e}_1,\dots,x^{p^e}_{i-1})B
  $
  in $B$.
  But because $B$ is Cohen-Macaulay, we can divide out the $x_i^{p^e}$ to get  $bz^{p^e} \in (x^{p^e}_1,\dots,x^{p^e}_{i-1})B.$ Expanding to $R$, we have equivalently
   \begin{equation}
\label{equation for CM}
   b^{1/p^e}z \in  (x_1,\dots,x_{i-1})R^{1/p^e}.
 \end{equation}
Now using the F-regularity of  $R$, there is an $e$ and $\phi \in \Hom_{R}(R^{1/p^e},R)$ such that $\phi(b^{1/p^e})=1$. Applying $\phi$ to (\ref{equation for CM}) we have $z\in (x_1,\dots,x_{i-1})R$. Thus $R$ is  Cohen-Macaulay.{\footnote{For readers familiar with local cohomology, we leave as an exercise to find a slick local cohomology proof that F-regular rings are Cohen-Macaulay.}}

Next, we tackle normality. Fix an element $x/y$ in the fraction field of $R$ integral over $R$. We must show that $y$ divides $x$ in $R$. Since $x/y$ is integral over $R$, there is an equation
\[(x/y)^m+r_1(x/y)^{m-1}+\cdots+r_m=0,\]
where each $r_j$ is in $R$. Raising both sides of this equation to the $p^e$-th power, we can see that $(x/y)^{p^e}$ is also integral over $R$ for all $e\geq 1$. Since the integral closure $\overline{R}$ of $R$ is module-finite over $R$, there is a $c\in R$ such that $c\overline{R}\subseteq R$; in particular, $c(x/y)^{p^e}\in R$ for all $e\geq 1$, {\it i.e.} $cx^{p^e}\in (y^{p^e})$ for all $e\geq 1$. Hence $cx^{p^e}=ry^{p^e}$ for some $r\in R$. Therefore we have
\begin{equation}
\label{equation for normality}
c^{1/p^e}x=r^{1/p^e}y
\end{equation}
Since $R$ is F-regular, there is an $e$ and $\phi_e\in \Hom_{R}(R^{1/p^e},R)$ such that $\phi_e(c^{1/p^e})=1$. Applying $\phi_e$ to (\ref{equation for normality}) we have that $x\in (y)$. This shows that $x/y\in R$ and finishes the proof.
\end{proof}

To summarize, we now have proved the following implications among classes of singularities:
$$
\{\text{Regular}\} \implies \{\text{F-regular}\} \implies \{ {\text{Frobenius split, Cohen-Macaulay,  Normal}}\}.
$$
In addition, we have shown that both Frobenius splitting and F-regularity descend to direct summands.  This is all that is needed to prove the Hochster-Roberts theorem, at least in characteristic $p$.

\subsection{The Hochster-Roberts Theorem.} \label{prehistory} 

\begin{Thm}[1974]
\label{HR} Fix any ground field $k$. 
Let $G$ be a linearly reductive algebraic group  over $k$ acting on a regular Noetherian $k$-algebra  $S$. Then the ring of invariants
$$
S^G := \{ f \in S \,\, | \,\, f \circ g = f \,\, {\rm { \,\,\,for \,\, all\,\,\, }} g \in G\};
$$
 is Cohen-Macaulay.
\end{Thm}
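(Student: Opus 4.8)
The plan is to reduce everything to the chain of implications already established, via one structural fact: \emph{the inclusion $S^G \hookrightarrow S$ splits as a map of $S^G$-modules}. This is precisely the content of linear reductivity. Viewing $S$ as a rational $G$-module (a directed union of finite-dimensional $G$-stable subspaces), linear reductivity says that each such subspace, and hence $S$ itself, is a semisimple $G$-module; decomposing $S$ into $G$-isotypic components, the trivial isotypic component is exactly $S^G$, and the projection $\rho \colon S \to S^G$ along the sum of the nontrivial components restricts to the identity on $S^G$. Moreover $\rho$ is $S^G$-linear, because multiplication by an element of $S^G$ is a $G$-module endomorphism of $S$ and so preserves the isotypic decomposition. (In characteristic zero one may instead use the familiar averaging or integration operator, as in the earlier examples; in positive characteristic linear reductivity is a genuine restriction on $G$, but the Reynolds operator $\rho$ is produced all the same.) I would also recall that $S^G$ is a finitely generated $k$-algebra, in particular Noetherian, so that ``$S^G$ is Cohen--Macaulay'' makes sense and can be checked locally; and, passing to a domain factor if necessary, I take $S$, hence $S^G$, to be a domain.

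The positive-characteristic case with $k$ perfect is now immediate. Then $S$, being a finitely generated $k$-algebra, is F-finite, and so is $S^G$; since $S$ is a regular domain it is strongly F-regular by Proposition~\ref{RegimpliesFreg}, its direct summand $S^G$ is therefore F-regular by Proposition~\ref{direct summand of F-regular is F-regular}, and finally $S^G$ is Cohen--Macaulay by Theorem~\ref{FregimpliesCM}. This three-line deduction is the entire heart of the argument. (For a non-perfect ground field of characteristic $p$ one reduces to the F-finite case by a standard faithfully flat descent, or appeals to the tight-closure proof, which never needs F-finiteness; I would not belabor this.)

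The characteristic-zero case I would handle by the now-standard reduction to prime characteristic. Fix finite presentations for $S$ and $S^G$, finite data describing the $G$-action, and an explicit $S^G$-linear retraction $\rho$, and spread all of this out over a finitely generated $\Z$-subalgebra $A \subset k$ (a domain, since it embeds in the field $k$), so that base change to the residue field at a closed point of $\Spec A$ recovers a split inclusion of finitely generated algebras over a finite field. After shrinking $\Spec A$, the big ring in each such fiber is regular --- regularity being an open condition --- and, being finitely generated over a finite, hence perfect, field, is F-finite; by the case already treated, the corresponding invariant ring is Cohen--Macaulay. Since the Cohen--Macaulay locus of a finite-type flat morphism is open, and $\Spec A$ is irreducible with dense closed points, this forces the generic fiber --- which is $S^G$ --- to be Cohen--Macaulay over $k$. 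The main obstacle lies entirely in this last step: one must verify that the whole package, and above all the $S^G$-linear splitting $\rho$, can be defined over some finitely generated $\Z$-algebra and specializes correctly at almost all primes, and that Cohen--Macaulayness genuinely ascends from the special fibers to the generic one. All of this is classical (see \cite{HochsterRobertsInvSubring} and the surveys cited in the introduction), but it is exactly the part not supplied by the Frobenius machinery of this section; the prime-characteristic core, by contrast, is the short deduction of the preceding paragraph.
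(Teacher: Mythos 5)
Your proof takes essentially the same approach as the paper: establish that linear reductivity gives an $S^G$-linear splitting of $S^G \hookrightarrow S$, then in positive characteristic chain Proposition~\ref{RegimpliesFreg}, Proposition~\ref{direct summand of F-regular is F-regular}, and Theorem~\ref{FregimpliesCM}, deferring the reduction to characteristic $p$ to the literature. Your account merely fills in two details the paper glosses over --- the construction of the Reynolds operator and the spreading-out/openness argument for the reduction step --- but the core deduction is identical.
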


For example, let $V$ be a finite dimensional representation of a linearly reductive group $G$. 
The Hochster-Roberts theorem guarantees that the ring of invariants for the induced action of  $G$ on the symmetric algebra  of $V$ is Cohen-Macaulay. From a practical point of view, this means that 
the invariants form  a graded finitely generated {\it free\/}  module over some polynomial subring.

Geometrically,  the point of the Hochster-Roberts theorem is that when a reasonable group acts on a smooth variety, the ``quotient variety" will  have reasonably nice singularities.  Indeed, let $X$ be a smooth (affine) variety on which the group $G$ acts by regular maps. Then there is an induced action on the coordinate ring $S$, and more or less by definition, the  ``quotient variety" is the unique variety $X/G$ whose coordinate ring is $S^G$.  The Cohen-Macaulayness of $S^G$  is a niceness condition on the singularities of the quotient.{\footnote{Some caution is in order here: one can not usually put the structure of a variety on the set of $G$-orbits of $X$, although in a sense that can be made precise, Spec $S^G$ is the algebraic variety ``closest to being  a quotient"---   it behaves as a quotient in a categorical sense. If  $G$ is finite (and its order is not divisible by $p$),  the topological space Spec $ S^G$ {is}  the topological quotient of Spec $S$ by $G$.
In the non-affine case, the situation is even more complicated, and  there are several ``quotients," which depend  on a choice of {\it linearization\/} of the action. This is the huge and beautiful theory of  geometric invariant theory, or GIT \cite{MumfordGITBook}. }}

\subsubsection {Linear Reductive Groups.} By definition, a {\bf linearly reductive group} is an algebraic group with the property that every finite dimensional representation is completely reducible, that is, it decomposes as  a direct sum of irreducible representations. In characteristic zero, linearly reductive is the same as reductive, so includes all semi-simple algebraic groups. In particular, 
all finite groups, all tori, and all matrix groups such as  $ GL_n $ and   $SL_n$ are  linearly reductive over  a field of  characteristic zero. Over a field of positive characteristic $p$, linearly reductive groups are less abundant: tori and finite groups whose order is not divisible by $p$, as well as extensions of these.  See \cite{NagataCompleteReducibility} for more information.

The point for us is this: if $G$ is a linearly reductive group acting on a regular $k$-algebra $S$,
then  the inclusion of the ring of invariants $S^G$ in $S$ splits.

 Using this, it is easy to prove the Hochster-Roberts' theorem in the prime characteristic case.
We refer to the original paper \cite{HochsterRobertsInvSubring} for the reduction to prime characteristic.

\begin{proof}[Proof of the Hochster-Roberts' Theorem in Prime Characteristic.]
Because $S^G$ is a  {\it direct summand\/} of the ring $S$, the Hochster-Roberts theorem  follows immediately from 
the following:
\begin{Thm}
Let $R \subset S$ be a split inclusion of rings of positive characteristic. If $S$ is regular, then $R$ is F-regular, hence Cohen-Macaulay.
\end{Thm}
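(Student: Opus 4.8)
The plan is to obtain the statement by simply chaining together the three structural facts already in place. Since this restatement is only needed in the setting produced by reduction to prime characteristic, I would first reduce to the case where $S$, and hence its split subring $R$, is an F-finite Noetherian domain: in the Hochster--Roberts application $S$ is a polynomial ring (or symmetric algebra) over a perfect field, so F-finiteness and the Noetherian hypothesis are free, and the not-necessarily-domain case is subsumed by the product decompositions mentioned in the footnote to Proposition~\ref{RegimpliesFreg} (a Noetherian regular ring is a finite product of regular domains, and an F-regular ring is a product of F-regular domains).

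Next I would invoke Proposition~\ref{RegimpliesFreg}: an F-finite regular domain is strongly F-regular, so $S$ is F-regular. Then Proposition~\ref{direct summand of F-regular is F-regular} applies to the split inclusion $R\subset S$ and shows that $R$ is F-regular. Since the excerpt only indicates that this proposition has ``essentially the same proof'' as Proposition~\ref{split}, let me spell out the mechanism I would use: let $\phi\colon S\to R$ be an $R$-linear splitting of $R\hookrightarrow S$, and fix a nonzero $f\in R$. Regarding $f$ as a nonzero element of $S$, F-regularity of $S$ produces $e\in\mathbb{N}$ and $\pi\in\Hom_S(S^{1/p^e},S)$ with $\pi(f^{1/p^e})=1$. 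The composite
$$
R^{1/p^e}\hookrightarrow S^{1/p^e}\xrightarrow{\ \pi\ }S\xrightarrow{\ \phi\ }R
$$
is $R$-linear (an $S$-linear map is automatically $R$-linear via restriction of scalars along $R\hookrightarrow S$, and $\phi$ is $R$-linear by hypothesis) and sends $f^{1/p^e}\mapsto 1\mapsto 1$. As $f$ was an arbitrary nonzero element of $R$, this exhibits $R$ as strongly F-regular.

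Finally I would apply Theorem~\ref{FregimpliesCM}: an F-regular ring is Cohen--Macaulay (and normal), which yields the conclusion. I do not expect a genuine obstacle here, as all the substantive work lies in the earlier propositions; the only points that deserve a moment's care are the implicit F-finiteness hypothesis (needed to cite Proposition~\ref{RegimpliesFreg}, and automatic after reduction mod $p$) and the verification that the displayed composite is well defined and genuinely $R$-linear rather than merely additive.
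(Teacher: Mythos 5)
Your proof is correct and follows exactly the same route as the paper's, which likewise chains Proposition~\ref{RegimpliesFreg} (regular implies F-regular), Proposition~\ref{direct summand of F-regular is F-regular} (F-regularity descends to direct summands), and Theorem~\ref{FregimpliesCM} (F-regular implies Cohen--Macaulay and normal). Your explicit verification of the descent step and the remarks on F-finiteness are sound but just fill in details the paper leaves to the reader.
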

This theorem in turn is simply a stringing together of Proposition \ref{RegimpliesFreg}, which tells us $S$ is F-regular, Proposition \ref{direct summand of F-regular is F-regular}, which guarantees that the property of F-regularity is passed on to the direct summand $S^G$, and finally Proposition \ref{FregimpliesCM}, which tells us that $S^G$ is therefore Cohen-Macaulay and normal.
\end{proof}

While the Hochster-Roberts theorem  is most interesting  in characteristic zero (since that is where the most interesting groups are found), its original proof fundamentally uses  characteristic $p$.
 Later, Boutot gave a different proof of the characteristic zero case, which does not use reduction to prime characteristic, although it  still exploits the philosophy of ``splitting" we discuss here  \cite{BoutotSummabdRationalSing}.  This philosophy is further expounded in \cite{KovacsCharacterizationRatSing}.

\begin{ex}
\label{ex1}
Let $G $ be the two-element group $\{\pm 1\}$ under multiplication. Let $G$ act on $k^2,$ where $k$ is a field whose characteristic is not two,  in the obvious way  by multiplication:
$-1 \cdot (x, y) = (-x, -y)$.  The induced action on the coordinate ring $k[x, y]$ is the same: $-1$ acts by multiplying both $x$ and $y$ by $-1$, so that a monomial $x^ay^b$ is sent to $(-1)^{a+b} x^a y^b$. In particular, the invariant ring is the subring generated by polynomials of even degree, or
$$
 k[x, y]^G =  k[x^2, xy, y^2] \cong   k[u, v, w]/(v^2 - uw).
 $$
 According to the Hochster-Roberts theorem and its proof, this ring is F-regular in all finite characteristics, and hence Cohen-Macaulay. Standard ``reduction to characteristic $p$ techniques" guarantee  the ring is Cohen-Macaulay also when $k$ has characteristic zero. 
\end{ex}

\begin{ex}\label{ex2}
Let $X$ be the variety of all $2 \times n$ complex matrices, so $X \cong \mathbb C^{2n}$. Let $G = SL_2$ act on $X$ by left multiplication. The ring of invariants for the induced action of $SL_2$ on
$\mathbb C[x_{11}, \dots, x_{1n}, x_{21}, \dots, x_{2n}]$ is generated by all the $2 \times 2$ subdeterminants of the matrix of indeterminates:
$$
\left(
\begin{array}{cccc}
 x_{11} & x_{12}  & \dots & x_{1n}   \\
 x_{21} &  x_{22}  & \dots & x_{2n}
 \end{array}
\right).
$$ This is the homogeneous coordinate ring for the Pl\"ucker embedding of the Grassmannian of 2-dimensional subspaces of $\mathbb C^n$.
The  Hochster-Roberts Theorem guarantees that this ring is Cohen-Macaulay.
More generally, the Pl\"ucker ring of the  Grassmannian of $d$-dimensional subspaces of $\mathbb C^n$
is Cohen-Macaulay, since it is the ring of invariants for $SL_d$ acting on $\mathbb C^{d\times n}$ by left multiplication. In characteristic $p$, the group $SL$ is not linearly reductive and the ring of invariants does not split. None-the-less, this ring is F-regular in all prime characteristics\cite{HochsterHunekeParameterIdealSplitting}, hence Cohen-Macaulay.
\end{ex}

\subsection{F-signature.}
A numerical refinement of F-regularity called the {\it F-signature}  sharpens the classification of F-singularities  by measuring the {\it growth rate\/} of the rank of a maximal free summand of the $R$-module $R^{1/p^e}$ as $e$ goes to infinity.  This was first studied in \cite{SmithVandenBergh}. 

Fix a local F-finite domain $R$. For each natural number $e$, we can decompose the 
$R$ module $R^{1/p^e}$ as a direct sum of  indecomposable modules, and count the number $a_e$ of summands  of $R^{1/p^e}$ that are isomorphic to $R$. For a Frobenius split ring $R$, there is always at least one. If $R$ is regular, all summands are isomorphic to $R$,  so $a_e$  is equal to the rank of $R^{1/p^e}$ over $R$. For arbitrary $R$,  the (generic) rank of $R^{1/p^e}$ over $R$ 
obviously bounds the number $a_e$.   For an F-regular ring, we expect many summands of $R^{1/p^e}$ isomorphic to $R$, so we expect $a_e$ to be relatively large, and to grow with $e$. In fact, as it turns out, we can define the F-signature of $R$ to be
$$
s(R) = \lim_{e\rightarrow \infty} \frac{a_e}{\delta^e},
$$
where $\delta $ is the generic rank of $R$ over $R^p$, 
 that is $\delta= [K:K^p]$ where $K$ is the fraction field of $R$.  This limit exists \cite{TuckerFSigExists}, and is at most one. 

The F-signature can be used to classify F-regular rings. Indeed, 
Huneke and Leuschke proved that the F-signature is one if and only if $R$ is regular  in the paper \cite{HunekeLeuschkeMCMModules} that coined the term ``F-signature." Furthermore, 
the  F-signature is positive if and only if $R$ is F-regular \cite{Aberbach-Leuschke}. Thus each F-regular ring has an F-signature strictly between zero and one; 
the closer the F-signature is to one, the ``less singular" the ring is.  For example,  for rational double points such $xy = z^{n+1}$, the F-signature is $1/(n+1)$ \cite{HunekeLeuschkeMCMModules}, reflecting the fact that the singularity is ``worse" for larger $n$. Many more computations of this type can be found in  \cite{HunekeLeuschkeMCMModules} and later \cite{Yao06}.  Formulas for the F-signature of toric varieties are worked out in Von Korff's PhD thesis \cite{VonKorffThesis}; see also \cite{Watanabe-Yoshida} and \cite{SinghFSigSemiGroupRing}.  Tucker vastly generalizes and simplifies much of the literature on F-signature  in \cite{TuckerFSigExists}. 

There are many interesting open questions about the F-signature. No known examples of non-rational F-signatures are known (though some expect that they exist). Also, Florian Ensecu has suggested that  there may be  an upper bound on the $F$-signature of a non-regular ring depending only on the dimension: for $d\geq 2$, the singularity defined by $x_0^2 + x_1^2 + \dots + x_{d}^2$ (characteristic $\neq 2$) has F-signature $1 - \frac{1}{d}$, and no $d$-dimensional singularities of larger F-signature are known \cite{FlorianPersonalCommunication}.  The F-signature is closely related to the Hilbert-Kunz multiplicity, a subject pioneered by Paul Monsky \cite{MonskyHKFunction}; see Huneke's survey \cite{HunekeLectureHKFSignature} or Brenner's paper \cite{BrennerIrratHKMultip}.
   For more  developments,  see the recent papers of Blickle, Schwede and Tucker \cite{BlickleSchwedeTuckerFSigFSplitting}  and \cite{BlickleSchwedeTuckerpfractals}, which include generalizations of F-signature to pairs.

\subsection{Frobenius splitting in characteristic zero and Connections with Singularities in Birational Geometry.}\label{char0stuff}
We briefly recall the standard technique for extending these ideas to algebras over fields of characteristic zero. Let $\mathbb C$ denote any field of characteristic 0.

 Let $R$ be a finitely generated  $\mathbb C$-algebra. Fix a presentation
  $$R\cong \mathbb{C}[x_1,\dots,x_n]/(f_1,\dots,f_r).$$ Let $A$ be the $\mathbb{Z}$-subalgebra of $\mathbb C$ generated by all  coefficients of  the polynomials $f_1,\dots,f_r$, and set 
 $$R_A=A[x_1,\dots,x_n]/(f_1,\dots,f_r).$$ Since $A$ is a finitely generated  $\mathbb{Z}$-algebra, the residue field of $A$ at each of its maximal ideals is finite. The map 
 Spec $R_A \rightarrow $ Spec $A$ can be viewed as a ``family of models" of the original algebra $R$. The closed fibers  of this map are  characteristic $p$ schemes (of varying $p$) whereas the generic fiber is a flat base change from the original $R$. Roughly speaking, $R$ is F-regular or Frobenius split if most (or at least a dense set) of the closed models have this property. More precisely, 

\begin{defi}\label{cha0}
Let $R, A,R_A$ be as above. The ring $R$ is said to have {\it  Frobenius split type} (or {\it  F-regular type}) if there is a Zariski dense set of  maximal ideals $\mu$ in Spec $A$ such that $A/\mu\otimes_AR_A$ is Frobenius split (or F-regular).{\footnote{In the literature, this is usually called {\it dense\/} F-split (F-regular) type. The related condition that  $A/\mu\otimes_AR_A$ is F-split  (F-regular) for a Zariski {\it open\/} set of maximal ideal $\mu$ in Spec $A$ is called {\it open} F-split (F-regular)  type. }} 
\end{defi}

Although it is not completely obvious, Definition \ref{cha0} does not depend on the presentation of $R$, nor on the choice of $A$. See \cite{HochsterHunekeTightClosureInEqualCharactersticZero}.
\begin{ex}\label{examples-type}
\begin{enumerate}
\item The ring $\C[x,y,z]/(y^2-xz)$ has  F-regular type. As a matter of fact, taking $A = \mathbb Z$,  the closed fibers of the family are the rings $\mathbb F_p[x,y,z]/(y^2-xz),$  which are 
  F-regular for {\it every\/}  prime number $p$. Cf. Example \ref{ex4} and Proposition \ref{direct summand of F-regular is F-regular}.
\item The ring $\C[x,y,z]/(x^3+y^3+z^3)$ has Frobenius split type, but {\it not} F-regular type. Indeed, 
$\mathbb F_p[x,y,z]/(x^3+y^3+z^3)$ can be  checked to be Frobenius split after reduction mod $p$ whenever $p\equiv 1$ mod $3$, so 
that there is an infinite  set of prime numbers $p$ (hence dense set of Spec $\mathbb Z$) for which 
the ``reduction mod $p$" is Frobenius split.{\footnote{If $p \equiv 2 \, $mod $3,$ the ring is not Frobenius split, so $R$ has {\it dense \/} F-split type, but not {\it open \/} F-split type. Cf. Example \ref{elliptic}. On the other hand, it is expected that dense and open F-regular type are equivalent; this is known in Gorenstein rings and related settings; see \cite{SmithRationalSingularities} and \cite{HaraWatanabe}.}} On the other hand,  for every $p \geq 5$ and every $e$,  one can show that there is {\bf no  map\/}  sending $x^{1/p^e}$ to 1. So this ring is not F-regular type. \end{enumerate}
\end{ex}

\bigskip
\subsubsection{Connections with the Singularities in the Minimal Model Program.}\label{MMPsing}
Amazingly, the properties of Frobenius splitting and F-regularity in characteristic zero turn out to be closely related to a number of important issues studied independently in algebraic geometry, including log canonical and log terminal singularities, and ultimately positivity and multiplier ideals as well. 
For example, 
\begin{Thm}\label{mmp}  \cite{SmithRationalSingularities}, \cite{HaraCharacterizationRatSing}  \cite{MehtaSrinivasRatImpliesFRat}, \cite{ElkikRatCanSing}
Let $R$ be a Gorenstein domain finitely generated over a field of characteristic zero. 
Then 
\begin{enumerate}
\item $R$ has F-regular type if and only if $R$ has log terminal singularities. 
\item If $R$ has Frobenius split type, then $R$ has log canonical singularities. 
\end{enumerate}
\end{Thm}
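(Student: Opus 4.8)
\emph{Proof strategy.} Theorem~\ref{mmp} is a synthesis of several independent results, so the plan is to reduce (1) to a chain of equivalences valid for Gorenstein rings,
\[ \{\text{F-regular type}\}\iff\{\text{F-rational type}\}\iff\{\text{rational singularities}\}\iff\{\text{log terminal}\}, \]
and to prove (2) by a duality-plus-resolution argument on the characteristic $p$ models. Throughout, ``type'' conditions are tested fibrewise (and after localizing at a maximal ideal) on the family $\Spec R_A\to\Spec A$ of Definition~\ref{cha0}.

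First I would establish the leftmost equivalence, which is a purely characteristic $p$ statement: an F-finite \emph{Gorenstein} ring is strongly F-regular if and only if it is F-rational. The implication ``$\Rightarrow$'' always holds; for ``$\Leftarrow$'' one uses that in an F-rational local ring the top local cohomology $H^d_{\fm}(R)$ carries a Frobenius action with no proper nonzero stable submodule, which --- since for Gorenstein $R$ this module is the injective hull of the residue field --- produces the splittings of $Rf^{1/p^e}\hookrightarrow R^{1/p^e}$ demanded by Definition~\ref{def:Freg}. Next, the middle equivalence: by Smith's theorem \cite{SmithRationalSingularities} an F-rational ring of characteristic $p$ has pseudo-rational singularities; concretely, F-rationality forces $R^i\pi_*\calo_Y=0$ for $i>0$ on a resolution $\pi\colon Y\to X$. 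Since a coherent sheaf nonzero in characteristic zero remains nonzero modulo $p\gg0$, and resolutions spread out over a dense open of $\Spec A$, the contrapositive yields that F-rational type implies rational singularities. For the rightmost equivalence I would invoke Elkik's theorem \cite{ElkikRatCanSing} that canonical singularities are rational, together with its Gorenstein converse, and note that for a Gorenstein ring ``canonical'' and ``log terminal'' coincide, the discrepancies being integers.

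The serious input is the reverse of the middle equivalence, rational $\Rightarrow$ F-rational type, due independently to Hara \cite{HaraCharacterizationRatSing} and Mehta--Srinivas \cite{MehtaSrinivasRatImpliesFRat}. The plan here: fix a log resolution $\pi\colon Y\to X$ in characteristic zero, spread everything out over a finitely generated $\Z$-algebra $A$, and reduce modulo a prime $p\gg0$, chosen so that the fibre $Y_p$ lifts to $W_2$ of its residue field and $p>\dim X$. Deligne--Illusie then splits the de Rham complex of $Y_p$, giving Kodaira--Akizuki--Nakano-type vanishing on $Y_p$; feeding this through the Leray spectral sequence and local duality shows that the Frobenius action on $H^d_{\fm}(R_p)$ has no proper nonzero stable submodule, i.e. $R_p$ is F-rational, for all $p\gg0$. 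I expect this lifting-and-vanishing step to be the main obstacle; once it is in hand, the equivalences of the previous paragraph finish part (1).

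For part (2), work on a reduction $R_p$ that is Frobenius split, say via $\phi\in\Hom_{R_p}(F_*R_p,R_p)$ with $\phi(1)=1$. Since $R_p$ is Gorenstein, Grothendieck duality for the finite flat Frobenius identifies $\Hom_{R_p}(F_*R_p,R_p)$ with $F_*\calo_X\big((1-p)K_X\big)$, so $\phi$ corresponds to a nonzero section whose zero divisor $D$ is effective and satisfies $K_X+\tfrac{1}{p-1}D\sim 0$. The condition $\phi(1)=1$ is exactly what forces $D$ to be non-degenerate enough that pulling it back to a log resolution gives all discrepancies $\ge-1$, so that $(X,\tfrac1{p-1}D)$ --- and hence, $D$ being effective, $(X,0)$ itself --- is log canonical. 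Since the discrepancies of the model agree with those of the characteristic zero ring $R$, Frobenius split type implies log canonical. The converse of (2), log canonical $\Rightarrow$ Frobenius split type, is the well-known open problem tied to the weak ordinarity conjecture, which is why only one direction is asserted.
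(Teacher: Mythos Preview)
The paper does not actually prove Theorem~\ref{mmp}; it states the result with attributions and then remarks that the full statement is established in \cite{HaraWatanabe}, that (1) is ``closely related to the equivalence of rational singularities with F-rational type,'' and that (2) is ``closely related to the fact F-injective type implies Du Bois singularities.'' So there is no detailed in-paper argument to compare against, only this roadmap.

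Your outline for (1) is correct and is exactly the route the paper points to: the chain
\[
\text{F-regular type} \Longleftrightarrow \text{F-rational type} \Longleftrightarrow \text{rational singularities} \Longleftrightarrow \text{log terminal}
\]
for Gorenstein rings, with the first equivalence from \cite{HochsterHunekeTCStrongFRegularity}, the middle from \cite{SmithRationalSingularities} and \cite{HaraCharacterizationRatSing}/\cite{MehtaSrinivasRatImpliesFRat}, and the last from \cite{ElkikRatCanSing}. Your identification of the Deligne--Illusie input as the crux of the hard direction is accurate.

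Your treatment of (2), however, has a genuine gap. You correctly translate a Frobenius splitting $\phi$ into an effective divisor $D$ with $(p-1)K_X + D \sim 0$, but the sentence ``the condition $\phi(1)=1$ is exactly what forces $D$ to be non-degenerate enough that pulling it back to a log resolution gives all discrepancies $\ge -1$'' is precisely the content of the theorem you are trying to prove, not a step in its proof. Why a splitting forces this discrepancy bound is the entire substance of the Hara--Watanabe argument (their Theorem~3.3), and it is not a formality: one must analyze how the splitting interacts with the pullback of $D$ on a resolution, using Fedder-type criteria or an injectivity argument on local cohomology, and this requires real work. The transfer of the conclusion back to characteristic zero (your final sentence) is fine once the characteristic~$p$ statement is in hand, since log resolutions spread out and discrepancies are stable in the family; but as written your argument for~(2) has asserted the hard step rather than supplied it.
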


 We do not digress to discuss all the relevant definitions here, but refer instead to the literature. For Gorenstein varieties, log terminal is equivalent to rational singularities \cite{ElkikRatCanSing}, which may be more familiar.
 
 Theorem \ref{mmp} is proven more generally by Hara and Watanabe in \cite{HaraWatanabe}.  Statement (1) is closely related to the equivalence of rational singularities with F-rational type (proved by Smith \cite{SmithRationalSingularities} and Hara/Mehta-Srinivas \cite{HaraCharacterizationRatSing}  \cite{MehtaSrinivasRatImpliesFRat}).   
 Statement (2)  is closely related to the fact F-injective type implies Du Bois singularities (due to Schwede \cite{SchwedeFInjectiveAreDuBois}). 
 
 The converse of 
 Statement (2) is conjectured to hold in general as well. 
 This long-standing open question  related to an important conjecture linking the {\it F-pure threshold}  and  the {\it log canonical threshold, }  a very rich area of research with a huge literature.
 While still wide open, it is worth pointing out that Mustata and Srinivas \cite{MustataSrinivas}  have reduced the question to an interesting conjecture with roots in Example \ref{elliptic}. 
 Although we cannot go into the F-pure threshold (or it generalization to the ``F-jumping numbers") here,   fortunately there are already extensive recent surveys, including the survey for beginners \cite{BenitoFaberSmith} and the more advanced research survey \cite{MustataIMPANGA}.  The F-pure threshold is very difficult to compute, often with complicated
fractal-like behavior; see, for example, the  
 papers \cite{HernandezFPTBinomial}, \cite{HernandezTexiera} and \cite{HernandezFPTDiagonal} for concrete computations of F-thresholds. There are few general results, but some can be found in \cite{BhattSingh} and \cite{HernandezNunezBetancourtWittZhang}.

\newpage
\section{Frobenius for Schemes: the Global Theory}

Let $X$  denote the affine scheme  $\Spec R$, where $R$ is a ring of prime characteristic $p$.  Like any map between rings, the Frobenius map induces a map of schemes, which we also denote $F$. As a map of the underlying topological space, $F: X \rightarrow X$  is the  {\it identity\/}  map,  but the associated  map of sheaves  $\mathcal O_X \rightarrow F_*\mathcal O_X$ is induced by the $p$-th power  map of $R$.

Of course,  the $p$-th power map is compatible with localization, so that  the Frobenius map on affine charts can be patched together to get a Frobenius map for  {\it any scheme $X$ of characteristic $p$.\/}  This  Frobenius map is the identity map on the underlying topological space of $X$ while the corresponding map of  sheaves of rings $\mathcal O_X \rightarrow F_*\mathcal O_X$ is the $p$-th power map locally on sections.

The sheaf $F_*\mathcal O_X$ is  quasi-coherent on $X$. Consistent with the terminology for rings, we say that a scheme $X$ is {\it F-finite\/} if the sheaf $F_*\mathcal O_X$ is {\it coherent.}  Our main interest is when $X$ is a variety over a perfect field $k$ of characteristic $p$; such a variety is always F-finite.\footnote{The Frobenius map is always a {\it scheme map,\/} but  not usually a morphism of varieties over $k$, because it is not linear over $k$ (unless, for example, $k = \mathbb F_p$). If we insist on working with maps of varieties, we can force the Frobenius map to be defined over $k$ by changing base to make this so; this is called the relative Frobenius map.  See e.g. \cite{Mehta-Ramanathan} \cite{BrionKumar}.}

 As in the affine case,  the sheaf  $F_*\mathcal O_X$ carries a remarkable amount of information about the scheme $X$. For example,   Theorem \ref{kunzthm}  implies that 
an F-finite scheme  $X$  is regular if and only if the coherent $\mathcal O_X$-module $F_*\mathcal O_X$ is locally free. That is, a variety over a perfect field is smooth if and only if the coherent sheaf  $F_*\mathcal O_X$  is a {\it vector bundle} over $X$.

 Similarly, we 
 can define a scheme $X$ to be {\it locally Frobenius split} if the map $\mathcal O_X \rightarrow F_*\mathcal O_X$ splits locally in a neighborhood of each point, or equivalently, if the corresponding map on stalks splits for each $p \in X$. Likewise,  we can define $X$ to be {\it locally F-regular}  if the stalks are all  F-regular. 
 Since  Frobenius splitting and F-regularity are {\it local\/} properties for  affine schemes  (by Lemmas \ref{loc} and \ref{Fregprop}), all the results from the previous section give corresponding local results for an arbitrary F-finite scheme of prime characteristic. For example, a locally F-regular scheme is  normal and Cohen-Macaulay by Theorem \ref{FregimpliesCM}. 

It is much stronger, of course, to require a {\it global}  splitting of the Frobenius map   $\mathcal O_X \rightarrow F_*\mathcal O_X$. Not surprisingly, a global splitting of Frobenius has  strong consequences for the global geometry of $X$.  This is the topic of our second lecture.

\begin{defi} \label{defFsplit}The scheme
$X$ is  {\it Frobenius split}{\footnote{globally Frobenius split, if there is any possibility of confusion}}  if the Frobenius map $\mathcal{O}_X\to F_*\mathcal{O}_X$ splits as a map of $\mathcal{O}_X$-modules. This means that there exists a map of sheaves of $\mathcal O_X$-modules, $F_*\mathcal O_X \overset{\phi}\longrightarrow \mathcal O_X$ such that the composition
$$
\mathcal{O}_X\to F_*\mathcal{O}_X  \overset{\phi}\longrightarrow \mathcal O_X$$
is the identity map.
\end{defi}

The global consequences of splitting Frobenius, and indeed the term {\it Frobenius split,\/} were first treated systematically by Mehta and Ramanathan in \cite{Mehta-Ramanathan}; see also \cite{HaboushShortProofKempfVanishing}.  While inspired 
  by Hochster and Roberts' paper ten years prior, which focused on the local case, Mehta and Ramanathan  were motivated by the possibility of understanding the {\it global } geometry of Schubert varieties and related objects in algebro-geometric representation theory; see {\it e.g.} \cite{Mehta-Ramanathan}, or \cite{RamananRamanathanProjNormality}, \cite{MehtaRamanathanSchubertVar}.
  This idea was very fruitful, leading the Indian school of algebro-geometric representation theory to many important results   now chronicled in the book \cite{BrionKumar}.
   In Section \ref{locvsglob},  we formally show how the  local and global  points of view converge by translating global splittings of  a projective variety $X$ into local splittings ``at the vertex of the cone" over $X$.

\begin{ex} Projective space $\mathbb P^n_k$ is Frobenius split in every positive characteristic. Indeed, 
any (homogeneous) splitting of Frobenius for the polynomial ring $k[x_0, \dots, x_n]$ induces a splitting of the corresponding Frobenius map of sheaves $\mathcal O_{\mathbb P^n} \rightarrow F_*\mathcal O_{\mathbb P^n}$.
\end{ex}

  Frobenius split varieties satisfy strong vanishing theorems: 
\begin{Thm}\label{vanish}
Let $X$ be a Frobenius split scheme. If  $\mathcal{L}$ is an invertible sheaf on $X$ such that $H^i(X,\mathcal{L}^n)=0$ for $n\gg 0$, then $H^i(X,\mathcal{L})=0$.
\end{Thm}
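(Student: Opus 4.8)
The plan is to use the splitting to realize $H^i(X,\mathcal{L})$ as a direct summand of $H^i(X,\mathcal{L}^{p^e})$ for every $e$, and then to invoke the hypothesis with $n=p^e\gg 0$.

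First recall two standard facts about the Frobenius morphism $F\colon X\to X$. It is the identity on the underlying topological space, so $F$ is an affine morphism with $R^{>0}F_*=0$; hence $H^i\big(X,F_*\mathcal{G}\big)\cong H^i(X,\mathcal{G})$ for every quasi-coherent sheaf $\mathcal{G}$ (concretely, on a fixed open cover the two \v{C}ech complexes literally coincide, since $(F_*\mathcal{G})(U)=\mathcal{G}(U)$). Also $F^*\mathcal{L}\cong\mathcal{L}^{p}$: on an open cover trivializing $\mathcal{L}$, the transition functions of $\mathcal{L}$ are raised to their $p$-th powers under $F^{\#}$. Combining this with the projection formula for the locally free sheaf $\mathcal{L}$ gives a canonical isomorphism
$$
F_*\mathcal{O}_X\otimes_{\mathcal{O}_X}\mathcal{L}\;\cong\;F_*\big(F^*\mathcal{L}\big)\;\cong\;F_*\big(\mathcal{L}^{p}\big).
$$

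Next, by hypothesis there is an $\mathcal{O}_X$-linear map $\phi\colon F_*\mathcal{O}_X\to\mathcal{O}_X$ with $\phi\circ\big(\mathcal{O}_X\to F_*\mathcal{O}_X\big)=\mathrm{id}$. Since $(-)\otimes_{\mathcal{O}_X}\mathcal{L}$ is a functor, tensoring this composition with the invertible sheaf $\mathcal{L}$ keeps it the identity; using the isomorphism above we get a map of $\mathcal{O}_X$-modules $\mathcal{L}\to F_*(\mathcal{L}^{p})$ which is split by $\phi\otimes\mathrm{id}_{\mathcal{L}}$. Applying the additive functor $H^i(X,-)$ turns this split injection of sheaves into a split injection of cohomology groups, so $H^i(X,\mathcal{L})$ is a direct summand of $H^i\big(X,F_*(\mathcal{L}^{p})\big)\cong H^i(X,\mathcal{L}^{p})$.

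Finally, iterate: the construction just given, applied with $\mathcal{L}$ replaced successively by $\mathcal{L}^{p},\mathcal{L}^{p^2},\dots$, shows that $H^i(X,\mathcal{L})$ is a direct summand of $H^i(X,\mathcal{L}^{p^e})$ for every $e\geq 1$. Choosing $e$ large enough that $p^e$ lies in the range where $H^i(X,\mathcal{L}^{n})=0$, we conclude $H^i(X,\mathcal{L})=0$. The only steps requiring genuine care are the identification $F_*\mathcal{O}_X\otimes\mathcal{L}\cong F_*(\mathcal{L}^{p})$ (valid because $\mathcal{L}$ is invertible and $F$ is affine) and the routine verification that tensoring an $\mathcal{O}_X$-linear splitting with an invertible sheaf again yields a splitting; if one wishes to sidestep the projection formula, the isomorphism can instead be checked on a trivializing affine cover, where over a local trivializing section $\alpha$ it is the evident $R$-module isomorphism $F_*R\otimes_R R\alpha\cong F_*\big(R\alpha^{p}\big)$.
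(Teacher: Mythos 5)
Your proposal is correct and follows essentially the same route as the paper: tensor the Frobenius splitting with $\mathcal{L}$, use $F^*\mathcal{L}\cong\mathcal{L}^p$ together with the projection formula to identify $F_*\mathcal{O}_X\otimes\mathcal{L}\cong F_*(\mathcal{L}^p)$, and conclude that $H^i(X,\mathcal{L})$ is a direct summand of $H^i(X,\mathcal{L}^{p^e})$. The only cosmetic difference is that the paper works directly with the iterated Frobenius $F^e$ in one step, whereas you establish the $e=1$ case and then iterate; the content is identical.
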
\label{vanishing}
\begin{Cor}
\label{ample on F-split implies vanishing}
Let $\mathcal L$ be a ample invertible sheaf on a Frobenius split projective variety $X$. Then $H^i(X,\mathcal{L}) $  vanishes for all $i > 0$, and  if $X$ is Cohen-Macaulay ({\it e.g.} smooth),  then
also $H^i(X,\omega_X\otimes\mathcal{L})$ vanishes 
for all $i>0$.
\end{Cor}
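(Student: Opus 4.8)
The plan is to reduce both vanishing statements to Theorem \ref{vanish}, feeding it the asymptotic vanishing produced by Serre's theorem (and, for the second statement, by Serre duality).

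First I would treat $H^i(X,\mathcal{L})$. Since $\mathcal{L}$ is ample on the projective variety $X$, Serre vanishing gives $H^i(X,\mathcal{L}^n)=0$ for every $i>0$ once $n$ is large enough. This is exactly the hypothesis of Theorem \ref{vanish} for the invertible sheaf $\mathcal{L}$, so it yields $H^i(X,\mathcal{L})=0$ for all $i>0$, with no use of the Cohen-Macaulay hypothesis.

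For the second statement, set $d=\dim X$. Using Serre duality for the Cohen-Macaulay projective variety $X$, one identifies $H^i(X,\omega_X\otimes\mathcal{L})$ with the dual of $H^{d-i}(X,\mathcal{L}^{-1})$, so it suffices to show $H^j(X,\mathcal{L}^{-1})=0$ for all $j<d$. I would prove this by applying Theorem \ref{vanish} to the invertible sheaf $\mathcal{L}^{-1}$, whose powers are $\mathcal{L}^{-n}$: a second application of Serre duality identifies $H^j(X,\mathcal{L}^{-n})$ with the dual of $H^{d-j}(X,\omega_X\otimes\mathcal{L}^n)$, and since $\omega_X$ is coherent and $\mathcal{L}$ is ample, Serre vanishing kills $H^{d-j}(X,\omega_X\otimes\mathcal{L}^n)$ for $n\gg 0$ whenever $d-j>0$. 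Hence $H^j(X,\mathcal{L}^{-n})=0$ for $n\gg 0$ and $j<d$, and Theorem \ref{vanish} upgrades this to $H^j(X,\mathcal{L}^{-1})=0$ for $j<d$. Substituting back into the first duality isomorphism gives $H^i(X,\omega_X\otimes\mathcal{L})=0$ for all $0<i\le d$; for $i>d$ the cohomology vanishes trivially by dimension, so in fact it vanishes for all $i>0$.

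This is really just bookkeeping once Theorem \ref{vanish} is available; the only content is recognizing that ampleness supplies the asymptotic hypothesis of that theorem — directly in the first case, and after passing through Serre duality in the second. The one point deserving care, and the only place the Cohen-Macaulay assumption enters, is the invocation of Serre duality: one must ensure $X$ carries a dualizing sheaf $\omega_X$ for which the duality isomorphism holds against the locally free sheaves $\mathcal{L}^{\pm 1}$, which is precisely what Cohen-Macaulayness guarantees. No genuinely difficult step arises.
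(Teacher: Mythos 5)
Your proof is correct and follows essentially the same route as the paper's: the first statement comes directly from Serre vanishing plus Theorem~\ref{vanish}, and the second statement uses Serre duality twice (for which Cohen--Macaulayness guarantees a dualizing sheaf), applying Theorem~\ref{vanish} to $\mathcal{L}^{-1}$ in between. The only difference is presentational — you run the argument backwards from the goal while the paper runs it forward from Serre vanishing applied to $\omega_X \otimes \mathcal{L}^n$ — but the chain of reductions is identical.
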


\begin{proof}[Proof  of Theorem \ref{vanish} and its Corollaries]
By definition, the map $\mathcal{O}_X\to F^e_*\mathcal{O}_X$ splits. So tensoring with $\mathcal L$,  we also have a splitting of 
\[\mathcal{L}\to F^e_*\mathcal{O}_X\otimes \mathcal{L}=F^e_*F^{e*}\mathcal{L}=F^e_*\mathcal{L}^{p^e}.\] 
Here, the first equality follows from the projection formula  \cite[Exercise II 5.2 (d)]{Hartshorne};  the second equality $F^{e*}\mathcal L = \mathcal L^{p^e}$ holds because pulling back under Frobenius raises transition functions to the $p$-th power.  Since $\mathcal L$ is a direct summand of $F^e_*\mathcal{L}^{p^e},$   it follows that 
 the cohomology $H^i(X,\mathcal{L})$ is a direct summand of $H^i(X,F^e_*\mathcal{L}^{p^e})=H^i(X,\mathcal{L}^{p^e})$ for all $e$. But $H^i(X,\mathcal{L}^{p^e})=0$ for large $e$ by our hypothesis, so that $H^i(X,\mathcal{L})=0$.
 
 The corollary follows by Serre vanishing  \cite[Prop III 5.3]{Hartshorne}: an ample invertible sheaf $\mathcal L$ on a projective variety $X$ satisfies $H^i(X, \mathcal  L^n) = 0$ for large $n$ and positive $i$. The second statement follows by Serre duality \cite[III \S 7]{Hartshorne}: 
 Serre vanishing ensures $H^i(X, \omega_X \otimes \mathcal L^n)$ vanishes for large $n$, hence its dual $H^{\dim X-i}(X,  \mathcal L^{-n})$ vanishes; by the Theorem  also  $H^{\dim X-i}(X,  \mathcal L^{-1})$  vanishes, and hence so does its dual  $H^i(X, \omega_X \otimes \mathcal L)$. 
\end{proof}

Proving  a  variety is Frobenius split is therefore a worthwhile endeavor. One useful
criterion is essentially due to Hochster and Roberts: 
\begin{Prop}\label{Fcrit}
 A projective variety $X$ is Frobenius split if and only if
the induced map  $H^{\dim X}(X, \omega_X)  \rightarrow H^{\dim X}(X, F^*\omega_X) $ is injective. 
\end{Prop}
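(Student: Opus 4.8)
The plan is to express Frobenius splitting as the surjectivity of a single linear functional, dualize that functional by Serre duality, and identify the dual with the Frobenius pullback on top cohomology of the dualizing sheaf. Write $n=\dim X$ and let $\omega_X$ be the dualizing sheaf of $X$; recall that Serre (Grothendieck) duality gives natural perfect pairings $H^n(X,\mathcal F)^{\vee}\cong\Hom_{\mathcal O_X}(\mathcal F,\omega_X)$ for coherent $\mathcal F$, and in particular $H^0(X,\mathcal O_X)^{\vee}\cong H^n(X,\omega_X)$.

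The first step is formal. By definition $X$ is Frobenius split if and only if the Frobenius $\mathcal O_X\to F_*\mathcal O_X$ admits an $\mathcal O_X$-linear left inverse, i.e.\ if and only if the evaluation map
\[
\mathrm{ev}_1\colon \Hom_{\mathcal O_X}(F_*\mathcal O_X,\mathcal O_X)\longrightarrow H^0(X,\mathcal O_X),\qquad \phi\longmapsto\phi(1),
\]
has $1$ in its image. Since $X$ is an integral projective variety, $H^0(X,\mathcal O_X)$ is a field and $\mathrm{ev}_1$ is linear over it, so having $1$ in the image is the same as $\mathrm{ev}_1$ being nonzero, hence surjective. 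Note that $\mathrm{ev}_1$ is precisely precomposition with the Frobenius $\mathcal O_X\to F_*\mathcal O_X$, which is the unit $\mathcal O_X\to F_*F^*\mathcal O_X$ of the $(F^*,F_*)$ adjunction.

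The second, less formal, step is to dualize $\mathrm{ev}_1$. Writing $\mathcal O_X=\mathcal{H}om_{\mathcal O_X}(\omega_X,\omega_X)$, then using tensor--hom adjunction, the projection formula $F_*\mathcal O_X\otimes_{\mathcal O_X}\omega_X=F_*F^*\omega_X$ (as in the proof of Theorem \ref{vanish}), and Grothendieck duality for the finite map $F$ (in the form $\mathcal{H}om_{\mathcal O_X}(F_*\mathcal G,\omega_X)=F_*\mathcal{H}om_{\mathcal O_X}(\mathcal G,\omega_X)$, which uses $F^!\omega_X=\omega_X$), one obtains
\[
\Hom_{\mathcal O_X}(F_*\mathcal O_X,\mathcal O_X)=\Hom_{\mathcal O_X}(F_*F^*\omega_X,\omega_X)=\Hom_{\mathcal O_X}(F^*\omega_X,\omega_X)\cong H^n(X,F^*\omega_X)^{\vee},
\]
the last isomorphism by Serre duality together with $H^n(X,F_*F^*\omega_X)=H^n(X,F^*\omega_X)$ (finiteness of $F$). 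Chasing $\mathrm{ev}_1$ through these identifications, it becomes precomposition with the adjunction unit $\omega_X\to F_*F^*\omega_X$; by naturality of Serre duality this is the $\vee$-transpose of the map on $H^n$ induced by that unit, namely $H^n(X,\omega_X)\to H^n(X,F_*F^*\omega_X)=H^n(X,F^*\omega_X)$, and this induced map is by construction the Frobenius pullback $F^*$. Thus $\mathrm{ev}_1$ is surjective if and only if $F^*\colon H^n(X,\omega_X)\to H^n(X,F^*\omega_X)$ is injective, which is exactly the statement.

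The step I expect to be the real obstacle is this last one: verifying that the chain of duality isomorphisms carries $\mathrm{ev}_1$ to the transpose of the Frobenius \emph{pullback}, and not to the transpose of the trace (Cartier) map $F_*\omega_X\to\omega_X$ --- which is what results if one instead tensors the $\mathcal{H}om$ sheaf into its \emph{target}, or dualizes $\mathcal O_X\to F_*\mathcal O_X$ straight through $F^!$. Getting the right answer requires substituting $\mathcal O_X=\mathcal{H}om_{\mathcal O_X}(\omega_X,\omega_X)$ and feeding the $\omega_X$ into the \emph{source} of the $\mathcal{H}om$ via the projection formula, together with the compatibility of the Frobenius $\mathcal O_X\to F_*\mathcal O_X$ with the adjunction unit $\omega_X\to F_*F^*\omega_X$. (A minor caveat: to make all the displayed equalities literal one assumes $X$ Cohen--Macaulay with dualizing sheaf $\omega_X$ --- invertible when, for example, $X$ is smooth, the case of chief interest --- while for an arbitrary projective $X$ one runs the identical argument with the dualizing complex $\omega_X^{\bullet}$ and $\mathbf R\Hom$ in place of $\omega_X$ and $\Hom$.)
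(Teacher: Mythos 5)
Your argument is correct, and it takes a genuinely different route from the paper's. The paper does not give a direct global proof: it reduces to the cone by localizing the section ring at the irrelevant maximal ideal, invokes the Hochster--Roberts criterion that purity of a finite map of local rings can be detected after tensoring with the injective hull $E$ of the residue field, and then cites \cite[4.10.2]{SmithSantaCruz} for the translation back to $H^{\dim X}(X,\omega_X)$ --- essentially a local-duality (Matlis duality) argument carried out ``at the vertex.'' You instead stay global on $X$ and run the duality directly: express splitting as non-vanishing of the evaluation functional $\mathrm{ev}_1$, identify $\Hom_{\mathcal O_X}(F_*\mathcal O_X,\mathcal O_X)$ with $H^n(X,F^*\omega_X)^{\vee}$ via tensor--hom adjunction, the projection formula, Grothendieck duality for the finite flat homeomorphism $F$, and Serre duality, and then check (using naturality) that $\mathrm{ev}_1$ is the transpose of the Frobenius pullback on $H^n$. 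Your flag about the real pitfall --- that a careless dualization produces the transpose of the trace $F_*\omega_X\to\omega_X$ rather than of the pullback, and that one avoids this by writing $\mathcal O_X=\mathcal{H}om(\omega_X,\omega_X)$ and feeding $\omega_X$ into the source --- is exactly the right place to worry, and you resolve it correctly; the two versions are, of course, related by the symmetry of the Serre pairing. Your caveat about Cohen--Macaulayness is also well taken: for arbitrary projective $X$ the statement as written is implicitly using $h^{-n}$ of the dualizing complex, and the clean Serre duality form you invoke needs $X$ CM (or a shift to $\mathbf R\Hom$ and $\omega_X^\bullet$). The trade-off is what one would expect: the paper's route is short because it outsources the duality to a well-established local theorem, at the cost of a somewhat opaque passage through the cone; your route is self-contained and stays in the geometric category where the statement lives, at the cost of a careful naturality check for the chain of duality isomorphisms.
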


\begin{proof} Hochster and Roberts actually stated a local version of this proposition:  to check 
purity of any finite map of algebras $R \rightarrow S$, where $R$ is local,  it is enough to show that  the map $R \otimes E \rightarrow S \otimes E$ remains injective after tensoring with the injective hull  $E$ of the residue field of $R$. This statement reduces to the statement of Proposition \ref{Fcrit} by taking $R$ to be the localization of a homogeneous coordinate ring of $R$ at the unique homogeneous maximal ideal. See \cite[4.10.2]{SmithSantaCruz}. 
\end{proof}

\begin{ex}\label{elliptic} An elliptic curve over a perfect field of prime characteristic is Frobenius split if and only if it is {\it ordinary.} Indeed, ordinary means that the natural map induced by Frobenius $H^1(X, \mathcal O_X)  \rightarrow H^{1}(X, F^*\mathcal O_X)$ is injective, so the statement follows from Proposition \ref{Fcrit} since the canonical bundle of an elliptic curve is trivial. Thus in genus one, there are infinitely many Frobenius split curves, as well as infinitely many non-Frobenius split curves, depending on the Hasse-invariant of the curve. See, for example, \cite[IV \S 4 Exercise 4.14]{Hartshorne}.  Cf. Example \ref{examples-type} (2).
\end{ex}

\subsection{Global F-regularity.}
We  define a global analog of F-regularity for arbitrary integral F-finite schemes of prime characteristic $p$. For any effective Weil divisor $D$ on a normal variety, there is an obvious inclusion $\mathcal O_X \hookrightarrow \mathcal O_X(D)$. Thus for any $e$ we have an inclusion $F^e_*\mathcal{O}_X\hookrightarrow F^e_*\mathcal{O}_X(D)$, which we can pre-compose with the iterated Frobenius map to get a map 
\[\mathcal{O}_X\to F^e_*\mathcal{O}_X\hookrightarrow F^e_*\mathcal{O}_X(D).\]

\begin{defi}\label{defFreg} \cite{SmithGloballyF-Regular} An F-finite normal{\footnote{If $X$ is quasi projective, we can drop the normal from the definition and assume instead that $D$ is an effective Cartier divisor. This produces the same definition, because splitting along all Cartier divisors will imply normality (by Theorem \ref{FregimpliesCM}) as well as  splitting along all Weil divisors (since given an effective Weil divisor $D$, we can always find an effective Cartier divisor $D'$ such that $\mathcal O_X(D) \subset \mathcal O_X(D')$.))}}
  scheme $X$ is called {\it globally F-regular} if, for all effective Weil divisors $D$, there is an $e$ such that the composition
\[\mathcal{O}_X\to F^e_*\mathcal{O}_X\hookrightarrow F^e_*\mathcal{O}_X(D)\]
splits as a map of $\mathcal{O}_X$-modules. \end{defi}

Globally F-regular varieties are  strongly Frobenius split, in the sense  that not only are they Frobenius split but there are typically {\it many\/} splittings of Frobenius.  Indeed, suppose that $X$ is globally F-regular.  Then for {\it  any} effective divisor $D$, 
 there is an $e$  and a map $F^e_*\mathcal{O}_X (D) \overset{\phi}\rightarrow \mathcal O_X$ such that the composition  
\[\mathcal{O}_X\to F^e_*\mathcal{O}_X  \overset{\iota}\hookrightarrow 
 F^e_*\mathcal{O}_X(D) \overset{\phi}\rightarrow \mathcal O_X
\]
is the identity map. 
Thus we can view  the composition $\phi \circ \iota$  as a splitting of the (iterated) Frobenius $\mathcal O_X \rightarrow F^e_*\mathcal O_X$, which just happens to factor through  $F^e_*\mathcal O_X(D)$. 
Thus there are actually {\it many} splittings of the (iterated) Frobenius, as these are typically different maps for different $D.$  Of course, the Frobenius itself splits as well (not just its iterates):  we have a factorization
 $$\mathcal{O}_X \hookrightarrow F_*\mathcal{O}_X \hookrightarrow  F^e_*\mathcal{O}_X,$$  so the splitting can be restricted to  $ F_*\mathcal{O}_X$.
 
 Thus we have proved the following analog of Proposition \ref{FregimpliesFsplit}:
 \begin{Prop}
A  globally  F-regular scheme $X$ is always Frobenius split.
\end{Prop}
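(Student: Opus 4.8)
The plan is to extract the desired splitting of Frobenius from the definition of global F-regularity by making the trivial — but perfectly legal — choice of divisor, and then to cut the resulting splitting of an \emph{iterate} of Frobenius down to a splitting of Frobenius itself, exactly as in the proof of Proposition \ref{FregimpliesFsplit}.

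First I would invoke Definition \ref{defFreg} with $D$ equal to the zero divisor, which is certainly an effective Weil divisor. Since $\mathcal{O}_X(0)=\mathcal{O}_X$, the inclusion $F^e_*\mathcal{O}_X\hookrightarrow F^e_*\mathcal{O}_X(D)$ is the identity, so the composition appearing in the definition is just the $e$-th iterated Frobenius $\mathcal{O}_X\to F^e_*\mathcal{O}_X$. Global F-regularity therefore hands us, for some $e$, a map $\psi\colon F^e_*\mathcal{O}_X\to\mathcal{O}_X$ of $\mathcal{O}_X$-modules with $\psi\circ F^e=\mathrm{id}_{\mathcal{O}_X}$.

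Next I would record the factorization of $F^e$ through $F$. Applying the functor $F_*$ to the $(e-1)$-st iterated Frobenius $\mathcal{O}_X\to F^{e-1}_*\mathcal{O}_X$ produces an $\mathcal{O}_X$-module map $F_*\mathcal{O}_X\to F_*F^{e-1}_*\mathcal{O}_X=F^e_*\mathcal{O}_X$; precomposing with $F\colon\mathcal{O}_X\to F_*\mathcal{O}_X$ recovers $F^e$. Since $X$ is normal, hence reduced, all of these Frobenius maps are injective on sections, so $F_*\mathcal{O}_X$ sits as a subsheaf of $F^e_*\mathcal{O}_X$ in a way compatible with the Frobenius maps out of $\mathcal{O}_X$. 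Restricting $\psi$ along this inclusion yields $\phi\colon F_*\mathcal{O}_X\to\mathcal{O}_X$, and the identity $\phi\circ F=\psi\circ F^e=\mathrm{id}_{\mathcal{O}_X}$ shows that $X$ is Frobenius split in the sense of Definition \ref{defFsplit}.

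The only step requiring any care — and it is very minor — is the bookkeeping identifying $F^e_*\mathcal{O}_X$ with $F_*(F^{e-1}_*\mathcal{O}_X)$ and checking that the inclusion $F_*\mathcal{O}_X\hookrightarrow F^e_*\mathcal{O}_X$ above really is compatible with the various iterates of Frobenius, so that restriction of $\psi$ does what we claim. This is precisely the sheaf-theoretic shadow of the diagram chase used for the local statement in Proposition \ref{FregimpliesFsplit}, and presents no genuine obstacle; indeed this is essentially the content of the discussion immediately preceding the statement.
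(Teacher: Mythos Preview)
Your proposal is correct and follows essentially the same approach as the paper: obtain a splitting of an iterate $F^e$ of Frobenius (you do it cleanly by taking $D=0$ in Definition \ref{defFreg}), then restrict along the factorization $\mathcal{O}_X\hookrightarrow F_*\mathcal{O}_X\hookrightarrow F^e_*\mathcal{O}_X$ to get a splitting of $F$ itself. The paper phrases the first step for arbitrary $D$ (emphasizing that one in fact gets \emph{many} splittings), but the logical content is identical.
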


 Thus a globally F-regular variety can be viewed as belonging to a restricted class of Frobenius split varieties, in which there are many different splittings of (the iterated)  Frobenius---indeed so many that, for {\it every}  Weil divisor on $X$, we can find a splitting factoring through $F^e_*\mathcal{O}_X(D)$. 
 Such a Frobenius splitting is said to be a {\it Frobenius splitting along the divisor $D$} \cite{RamanathanFSplittingandSchubertVar} \cite{RamananRamanathanProjNormality}.

\begin{Rmk}
The reader will easily verify that 
for affine schemes, the local and global definitions  of F-regularity  are equivalent.  Let us point out only this much:  given an effective Cartier divisor $D$, we can chose a sufficiently small affine chart so that $D$ has 
 local defining equation $f$  on Spec $R$. This means that 
 $\mathcal{O}_X(D)$ is the (sheaf corresponding to the) invertible $R$-module $R\frac{1}{f}$. 
Then the map $
\mathcal{O}_X\to F^e_*\mathcal{O}_X(D)
$
of Definition \ref{defFreg} 
corresponds to the $R$-module map
$$
R \rightarrow [R\cdot \frac{1}{f}]^{1/p^e},\,\,\, {\rm{sending }} \,\,\,  1 \mapsto 1,$$
which splits if and only if the map $R \rightarrow R^{1/p^e}$ sending $1 \mapsto f^{1/p^e}$  splits. 
This is Definition \ref{def:Freg}.
 \end{Rmk}

For a non-affine scheme $X$, the global splitting of the map  
$ \mathcal{O}_X\to  F^e_*\mathcal{O}_X(D)$ 
is a {\it strong condition, which can not be checked  locally at stalks.} Thus $X$ can fail to 
be globally F-regular, and usually does, even when it is locally F-regular at each point:   globally F-regular varieties are rare even  among smooth projective varieties. For example, all smooth projective curves are locally F-regular (because all local rings are regular), but the only smooth projective curve which is globally F-regular is $\mathbb P^1$. This follows immediately from the vanishing theorem:

\begin{Cor}\label{nef} If $\mathcal L$ is a nef{\,}{\footnote{By definition, an invertible sheaf on a curve is nef if it has non-negative degree;  an invertible sheaf on a higher dimensional variety  
is nef if its restriction to every algebraic curve in the variety is nef. Ample line bundles are always nef.  Nef line bundles play an important role in higher dimensional birational geometry, being the ``limits of ample divisors". See Section 1.4 of  \cite{LazPAG1}. }} invertible sheaf on a globally F-regular projective variety, then   $H^i(X,\mathcal{L})$ vanishes for all  $\mathcal{L}$ and all $i>0$.
\end{Cor}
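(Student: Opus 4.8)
The plan is to feed a suitable ample divisor into the definition of global F-regularity and then appeal to a vanishing theorem with \emph{uniform} control over nef twists, namely Fujita's vanishing theorem. First I would fix an effective very ample divisor $A$ on $X$: this exists because $X$ is projective, so it embeds in some $\mathbb{P}^N$, and a hyperplane section not containing $X$ gives such an $A$. Write $\mathcal{A} = \mathcal{O}_X(A)$. By Fujita's vanishing theorem applied to $\mathcal{O}_X$ (see, e.g., \cite{LazPAG1}), there is an integer $m_0 \geq 1$ such that $H^i(X,\, \mathcal{A}^{m_0}\otimes\mathcal{N}) = 0$ for all $i > 0$ and \emph{every} nef invertible sheaf $\mathcal{N}$ on $X$. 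The point of invoking Fujita rather than plain Serre vanishing is precisely this uniformity in $\mathcal{N}$; it is what will let the quantifiers line up below.

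Next I would apply Definition \ref{defFreg} to the effective divisor $D = m_0 A$: there is an $e \in \mathbb{N}$ for which the composite $\mathcal{O}_X \to F^e_*\mathcal{O}_X \hookrightarrow F^e_*\mathcal{O}_X(m_0 A)$ splits as a map of $\mathcal{O}_X$-modules. Tensoring this split injection with the invertible sheaf $\mathcal{L}$, and using the projection formula together with $F^{e*}\mathcal{L} = \mathcal{L}^{p^e}$ (exactly as in the proof of Theorem \ref{vanish}), I obtain a split injection $\mathcal{L} \hookrightarrow F^e_*(\mathcal{A}^{m_0}\otimes\mathcal{L}^{p^e})$ of $\mathcal{O}_X$-modules. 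Since the $e$-th iterated Frobenius is a finite morphism which is a homeomorphism on underlying topological spaces, $H^i(X, F^e_*(-)) = H^i(X, -)$, so $H^i(X,\mathcal{L})$ is a direct summand of $H^i(X,\, \mathcal{A}^{m_0}\otimes\mathcal{L}^{p^e})$ for every $i$.

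Finally, $\mathcal{L}$ nef implies $\mathcal{L}^{p^e}$ is nef, so by the choice of $m_0$ the group $H^i(X,\,\mathcal{A}^{m_0}\otimes\mathcal{L}^{p^e})$ vanishes for all $i > 0$; hence $H^i(X,\mathcal{L}) = 0$ for all $i > 0$, as claimed. (One could instead run the same computation for each power $\mathcal{L}^n$ to get $H^i(X,\mathcal{L}^n)=0$ for all $n\geq 1$ and then invoke Theorem \ref{vanish}, but this detour is not needed.)

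The real content is the external input of Fujita vanishing, and the subtlety it resolves concerns the order of quantifiers: in Definition \ref{defFreg} the exponent $e$ depends on the chosen divisor $D$, so after committing to $D = m_0 A$ and extracting $e$, the twisting bundle $\mathcal{L}^{p^e}$ is only revealed afterwards. Ordinary Serre vanishing would not close the loop, since the admissible power of $\mathcal{A}$ had to be fixed \emph{before} $e$ was known; Fujita's uniform vanishing over all nef line bundles is exactly the tool that repairs this. The remaining ingredients---existence of an effective very ample divisor, the projection-formula identification, invariance of cohomology under the finite Frobenius pushforward, and nefness of $\mathcal{L}^{p^e}$---are routine and already implicit in the proof of Theorem \ref{vanish}.
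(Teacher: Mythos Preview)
Your proof is correct, and its skeleton matches the paper's: pick an effective ample divisor $D$, use global F-regularity to split $\mathcal{O}_X \to F^e_*\mathcal{O}_X(D)$, tensor with $\mathcal{L}$, and conclude that $H^i(X,\mathcal{L})$ is a summand of $H^i(X,\mathcal{L}^{p^e}\otimes\mathcal{O}_X(D))$. The difference lies in how the vanishing of this last group is obtained. You invoke Fujita's vanishing theorem to guarantee, uniformly over all nef twists, that $H^i(X,\mathcal{A}^{m_0}\otimes\mathcal{N})=0$; this cleanly resolves the quantifier issue you carefully identify. The paper instead cites the elementary fact (Lazarsfeld, Cor.~1.4.10) that if $\mathcal{L}$ is nef and $D$ is ample then $\mathcal{L}^n\otimes\mathcal{O}_X(D)$ is ample for every $n$, and then appeals (implicitly) to the already-proved Corollary~\ref{ample on F-split implies vanishing}: since $X$ is globally F-regular it is Frobenius split, so ample line bundles on $X$ already have vanishing higher cohomology. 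Thus the paper bootstraps from the ample case rather than importing Fujita. Your route is slightly heavier in its external input but more self-contained as a single step; the paper's route is lighter externally but leans on the internal result it has just established.
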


Now to see that 
$\mathbb P^1$ is the only globally F-regular projective
curve, note that the degree of  the canonical bundle on a curve of genus $g$ is  $2g-2$ so the canonical bundle is nef when the genus is positive. But 
since   $H^1(X, \omega_X) = 1$ for all connected curves,
Corollary \ref{nef} prohibits a curve of positive genus from being globally F-regular.

\begin{proof}  The proof is similar to  the proof of Theorem \ref{vanish}, so we only sketch it, referring to \cite[Thm 4.2] {SmithGloballyFReg} for details. For any effective $D$, we can use the splitting of a map $\mathcal O_X \rightarrow F^e_* \mathcal O_X(D)$ to show that  if $\mathcal L$ is any invertible sheaf such that $H^i(X, \mathcal L^n\otimes \mathcal O_X(D))$ vanishes for all large $n$ and some effective $D$, then also $H^i(X, \mathcal L)$ vanishes. Now the corollary follows  because if $\mathcal L$ is nef, there is an effective $D$ such that all $ \mathcal L^n\otimes \mathcal O_X(D)$ are ample by \cite[Cor 1.4.10]{LazPAG1}.
\end{proof}



In practice, we do not have to check splitting for {\it all } divisors $D$  to establish global F-regularity:

\begin{Thm}\label{testelement}
A projective variety $X$ is globally F-regular if for some ample divisor $D'$ containing the singular locus of $X$ and all divisors $D$ of the form $mD'$ for $m \gg 0$, there is an $e$ such that the map $\mathcal O_X \rightarrow F_*^e\mathcal O_X(D)$ splits as a map of $\mathcal O_X$-modules.  \end{Thm}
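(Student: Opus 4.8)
The plan is to reduce the splitting of $\mathcal{O}_X\to F^e_*\mathcal{O}_X(D)$ for an \emph{arbitrary} effective Weil divisor $D$ to the finitely many hypotheses we are given, using the key principle that once a splitting factors through some $F^e_*\mathcal{O}_X(D_0)$, it automatically produces splittings along any smaller divisor, and — more importantly — that splittings can be ``amplified'' by iterating Frobenius. First I would recall the elementary but crucial \emph{transitivity} observation: if $\mathcal{O}_X\to F^e_*\mathcal{O}_X(D)$ splits, then applying $F^{e'}_*$ and composing with $F^{e'}_*\mathcal{O}_X\hookrightarrow F^{e'}_*\mathcal{O}_X(D')$ for a suitable $D'$, one gets that $\mathcal{O}_X\to F^{e+e'}_*\mathcal{O}_X(p^{e'}D + D')$ splits (the pullback of $\mathcal{O}_X(D)$ under $F^{e'}$ is $\mathcal{O}_X(p^{e'}D)$, since Frobenius raises transition functions to the $p^{e'}$-th power). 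Chaining such splittings is the engine of the proof.

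Next, given an arbitrary effective Weil divisor $D$, I would find $m \gg 0$ so that $\mathcal{O}_X(mD') \supseteq \mathcal{O}_X(D)$ as subsheaves of the function field — possible because $D'$ is ample, so for $m$ large $mD' - D$ is (linearly equivalent to an) effective divisor, giving $\mathcal{O}_X(D)\subseteq \mathcal{O}_X(mD')$. Then a splitting of $\mathcal{O}_X \to F^e_*\mathcal{O}_X(mD')$ restricts (via $F^e_*\mathcal{O}_X(D)\hookrightarrow F^e_*\mathcal{O}_X(mD')$, i.e. post-composing the splitting with this inclusion is the wrong direction, so instead one factors $\mathcal{O}_X \to F^e_*\mathcal{O}_X(D) \hookrightarrow F^e_*\mathcal{O}_X(mD')$ and the splitting of the composite gives a splitting of the first map by restriction). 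So it suffices to handle $D = mD'$ for \emph{all} $m$, not just $m \gg 0$. To pass from $m\gg 0$ down to all $m$: given any $m$, choose $m_0 \gg 0$ with both $m_0$ and $m_0 + m$ in the ``good'' range (the hypothesis gives us $m D'$ for all sufficiently large $m$), split $\mathcal{O}_X \to F^{e}_*\mathcal{O}_X((m_0+m)D')$, and observe $(m_0+m)D' \geq m D'$, so this already splits $\mathcal{O}_X\to F^e_*\mathcal{O}_X(mD')$ by the restriction argument. Thus every effective $D$ is handled, which is the definition of global F-regularity.

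The one genuinely delicate point — and where I expect the main obstacle — is the role of the condition that $D'$ \emph{contains the singular locus}. On the smooth locus everything is Cartier and the bookkeeping above is transparent; the worry is at singular points, where $\mathcal{O}_X(D)$ for a Weil divisor $D$ need not be invertible and the inclusion $\mathcal{O}_X(D)\subseteq \mathcal{O}_X(mD')$ must be checked with care (reflexivity of these rank-one sheaves, behaviour of $F^e_*$, and the fact that $mD'$ must ``dominate'' $D$ even in codimension one through the singular points). The point of putting the singular locus inside the support of $D'$ is that adding a large multiple of $D'$ swamps any comparison difficulty near the singularities: for $m$ large, $mD' - D$ is effective \emph{and} its support contains $\operatorname{Sing} X$, so $\mathcal{O}_X(D)\hookrightarrow \mathcal{O}_X(mD')$ genuinely holds as reflexive sheaves. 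I would make this precise by working in codimension one (where $X$ is regular since it is normal, hence $R_1$) to get the divisor containment, then taking double duals / using that all sheaves in sight are reflexive on the normal scheme $X$ to conclude the containment of sheaves globally. Once that containment is secured, the rest is the formal splitting-transitivity juggling sketched above, and I would refer to \cite{SmithGloballyF-Regular} for the analogous detailed arguments rather than reproduce them in full.
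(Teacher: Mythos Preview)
Your argument contains a genuine gap at its most critical step. You claim that for an arbitrary effective Weil divisor $D$ and $m \gg 0$, one has $\mathcal{O}_X(D) \subseteq \mathcal{O}_X(mD')$ as subsheaves of the function field, justifying this by saying ``$mD' - D$ is (linearly equivalent to an) effective divisor.'' But linear equivalence to an effective divisor does \emph{not} give containment of sheaves inside $K(X)$: if $D$ has a prime component not in the support of $D'$ (take $X=\mathbb{P}^1$, $D'=[0]$, $D=[\infty]$), then no multiple $mD'$ dominates $D$ as an actual divisor. What ampleness gives you is only an \emph{abstract} injection $\mathcal{O}_X(D)\hookrightarrow \mathcal{O}_X(mD')$, i.e.\ multiplication by some rational function $s$. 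Under this injection the standard map $\mathcal{O}_X\to F^e_*\mathcal{O}_X(D)$ does \emph{not} compose to the standard map $\mathcal{O}_X\to F^e_*\mathcal{O}_X(mD')$; it composes to the map sending $1\mapsto s$. So a splitting of the latter standard map gives you no splitting of the former. Your restriction argument therefore does not go through, and the subsequent paragraph (reducing from $m\gg 0$ to all $m$) is beside the point.

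Relatedly, you have misdiagnosed the role of the hypothesis that $D'$ contains the singular locus. It has nothing to do with comparing reflexive sheaves near singular points. Its purpose is to guarantee that $U = X\setminus D'$ is \emph{regular}, hence locally F-regular. The paper's proof is the global version of the Hochster--Huneke test element argument: given any nonzero $d$ in a local chart (corresponding to an arbitrary effective $D$), regularity of $R_c$ (where $c$ cuts out $D'$) produces, for some $e$, an $R$-linear map $\psi\colon R^{1/p^e}\to R$ with $\psi(d^{1/p^e}) = c^N$ for some $N$; then the hypothesis furnishes $\phi\colon R^{1/p^{e'}}\to R$ with $\phi(c^{N/p^{e'}})=1$, and the composite $\phi\circ\psi^{1/p^{e'}}$ sends $d^{1/p^{e+e'}}\mapsto 1$. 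The transitivity/amplification principle you state in your first paragraph is exactly what is needed to compose these two steps, but you never deploy it for this purpose. The essential missing idea is that one must first use regularity away from $D'$ to manufacture a map landing in powers of the equation of $D'$, and only then invoke the splitting hypothesis.
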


\begin{proof}
This is a global adaptation, proved in \cite{SmithGloballyFReg}, of the following local theorem of Hochster and Huneke: If $c$ is an element of an F-finite domain  $R$ such that $R_c$ is regular, then $c$ has some power $f$ such that $R$ is F-regular if and only if there exists an $e$ and an $R$-linear  
map $R^{1/p^e} \rightarrow R$ sending $f^{1/p^e} \mapsto 1.$ 
More general and more effective versions of this theorem are proved in \cite{SmithGloballyFReg} and \cite[Thm 3.9]{SchwedeSmithLogFanoVsGloballyFRegular}. 
\end{proof}

\subsection{Local versus Global splitting.}\label{locvsglob} The precise relationship between local and global Frobenius splitting is clarified by the following theorem, which states roughly that 
  a projective variety  is Frobenius split  or globally F-regular if and only if  ``its affine cone" has that property: 
\begin{Thm}
\label{criterion F-split1}
Let $X \subset \mathbb P^n$ be a normally embedded projective variety over a field of characteristic $p$. Then $X$ is Frobenius split (respectively, globally F-regular)  if and only if the corresponding  homogenous coordinate ring  is Frobenius split (respectively, globally F-regular). 
\end{Thm}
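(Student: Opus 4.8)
\emph{Set-up.} The plan is to pass through the affine cone and its punctured version, using the $\mathbb{G}_m$-action to translate graded data on the homogeneous coordinate ring into sheaf-theoretic data on $X$. Write $S=\bigoplus_{d\ge 0}S_d$ for the homogeneous coordinate ring, so that $S_0=k$, $\mathrm{Proj}\,S=X$, and (by normal embeddedness) $S$ is a normal domain with $S_d=H^0(X,\mathcal{O}_X(d))$ for all $d\ge 0$. Let $Y=\Spec S$ be the affine cone, $\mathfrak m=S_{+}$ the vertex, $U=Y\setminus V(\mathfrak m)$ the punctured cone, and $q\colon U\to X$ the canonical morphism; $q$ is affine, faithfully flat and $\mathbb{G}_m$-equivariant, with $q_*\mathcal{O}_U=\bigoplus_{n\in\mathbb{Z}}\mathcal{O}_X(n)$ as a $\mathbb{Z}$-graded sheaf of $\mathcal{O}_X$-algebras. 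I will assume $\dim X\ge 1$ throughout (the case $\dim X=0$ being immediate), so that $\dim S\ge 2$ and the vertex $\mathfrak m$ has codimension $\ge 2$ in $Y$.

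\emph{Two reductions.} First I would record the easy steps. (i) ``$S$ is Frobenius split'' (resp.\ globally F-regular) means precisely ``$Y=\Spec S$ is Frobenius split (resp.\ globally F-regular)'' by the affine definitions; moreover, since $S$ is $\mathbb{N}$-graded with $S_0$ a field, the module $\Hom_S(F^e_*S,S)$ is graded, and the homogeneous component of degree $0$ of any splitting (resp.\ splitting along an effective divisor) is again a splitting, so splittings may be taken homogeneous/$\mathbb{G}_m$-equivariant. (ii) Because $S$ is normal and $\mathfrak m$ has codimension $\ge 2$, the reflexive $S$-module $\Hom_S(F^e_*S,S)$ is recovered from its restriction to $U$, that is, $\Hom_S(F^e_*S,S)\xrightarrow{\ \sim\ }H^0\bigl(U,\mathcal{H}om_{\mathcal{O}_U}(F^e_*\mathcal{O}_U,\mathcal{O}_U)\bigr)$, compatibly with ``evaluation at $1$''; similarly effective Weil divisors on $Y$ restrict isomorphically to effective Weil divisors on $U$, compatibly with the relevant splitting conditions. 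Hence $Y$ is Frobenius split (resp.\ globally F-regular) if and only if $U$ is.

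\emph{The dictionary between $U$ and $X$.} This is the heart of the argument. Since $q$ is affine and $q\circ F_U=F_X\circ q$, we get $q_*F_{U*}\mathcal{O}_U=F_{X*}q_*\mathcal{O}_U=F_{X*}\bigl(\bigoplus_n\mathcal{O}_X(n)\bigr)$, and the pushed-forward iterated Frobenius decomposes weight by weight into the maps $\mathcal{O}_X(n)\to F^e_{X*}\mathcal{O}_X(p^e n)$ arising from the projection formula $F^{e*}_X\mathcal{O}_X(n)=\mathcal{O}_X(p^e n)$ (exactly as in the proof of Theorem \ref{vanish}). Dualizing and taking global sections, a splitting of the Frobenius of $U$ is the same datum as a compatible $\mathbb{Z}$-graded system of $\mathcal{O}_X$-linear maps $F^e_{X*}\mathcal{O}_X(p^e n)\to\mathcal{O}_X(n)$, and the condition that the section $1$ maps to $1$ singles out the weight-zero constituent, namely a $\phi\in\Hom_{\mathcal{O}_X}(F^e_{X*}\mathcal{O}_X,\mathcal{O}_X)$ with $\phi(1)=1$, i.e.\ a Frobenius splitting of $X$. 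Conversely, given such a $\phi$, twisting by $\mathcal{O}_X(n)$ and again invoking the projection formula reconstructs the full weight-graded splitting over $U$. Combined with reductions (i) and (ii), this proves the Frobenius-splitting equivalence. For the globally F-regular equivalence I would run the identical computation after twisting by an effective divisor, and collapse the ``for all $D$'' quantifier on both sides to a single ample test divisor: on the $X$ side by Theorem \ref{testelement}, on the ring side by the local Hochster--Huneke criterion quoted in its proof. Concretely, a general ample divisor $D'$ on $X$ through $\Sing X$ is cut out by a single homogeneous $f\in\mathfrak m$ with $S_f$ regular, and splitting $\mathcal{O}_X\to F^e_{X*}\mathcal{O}_X(mD')$ corresponds under the dictionary to an $S$-linear map $F^e_*S\to S$ carrying a power of $f^{1/p^e}$ to $1$.

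\emph{Main obstacle.} I expect the real work to lie in the weight-graded bookkeeping of the third step, together with the two supporting points: making splittings of the graded ring $S$ homogeneous (the degree-component argument in reduction (i), which is where $S_0$ being a field enters), and the precise matching between effective divisors on the cone, on $U$, and on $X$ in the globally F-regular case --- in particular verifying that it suffices to test a cofinal family of ample divisors, so that Theorem \ref{testelement} and its local counterpart may legitimately replace the literal quantification over all Weil divisors. The reflexivity/codimension-$2$ input in reduction (ii) is routine once one notes $\dim S\ge 2$, but it is indispensable, and it is exactly the place where the hypothesis of normal embeddedness is used.
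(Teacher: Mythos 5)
Your argument is correct, and it reaches the same conclusion by a recognizably related but genuinely different mechanism. The paper never introduces the punctured cone $U$ or invokes reflexivity over a codimension-$\ge 2$ locus: instead it works directly with the section ring $S$, equips $S^{1/p}$ with its natural $\tfrac{1}{p}\mathbb{N}$-grading, observes that the integer-degree part $[S^{1/p}]_{\mathbb N}$ sheafifies under $\mathrm{Proj}$ precisely to $F_*\mathcal{O}_X$, and then matches a homogeneous splitting $S^{1/p}\to S$ with its restriction to $[S^{1/p}]_{\mathbb N}$, which in turn is the global-sections incarnation of a sheaf splitting on $X$ (the converse direction is the same correspondence, obtained by tensoring the sheaf splitting with $\bigoplus_n\mathcal{L}^n$ and taking $H^0$). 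Your route inserts two extra layers --- the equivalence between splittings of $Y=\Spec S$ and of the punctured cone $U$ (this is where reflexivity and $\mathrm{codim}\ge 2$ enter, and where the normality hypothesis is consumed), and the $\mathbb{G}_m$-descent from $U$ to $X$ carried by the affine morphism $q$ with $q_*\mathcal{O}_U=\bigoplus_n\mathcal{O}_X(n)$. What this buys you is a more geometric picture: the grading on $S$ becomes a group action, ``take the degree-zero component'' becomes ``take the $\mathbb{G}_m$-invariant part,'' and the awkward $\tfrac{1}{p}\mathbb{N}$-grading bookkeeping is absorbed into the statement that $F_U$ intertwines the $\mathbb{G}_m$-actions via the $p$-th power on $\mathbb{G}_m$. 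The cost is that you must justify removing the vertex, which the paper's $\mathrm{Proj}$-based argument sidesteps entirely because taking global sections of $\bigoplus_n\mathcal{L}^n$ already lands you in the saturated section ring. Your treatment of the globally F-regular half via Theorem~\ref{testelement} and the Hochster--Huneke test-element criterion is also consistent with the paper, which simply asserts that ``the proof for global F-regularity is similar'' and defers details to the cited references; your choice of a single homogeneous $f\in\mathfrak{m}$ cutting an ample divisor through $\Sing X$, with $S_f$ regular, is the right test element and matches the spirit of those sources.
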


\begin{ex} 
Grassmannian varieties  of any dimensions and characteristic are globally F-regular.  Indeed,  the homogeneous coordinate ring for the Pl\"ucker embedding of any Grassmannian is F-regular \cite{HochsterHunekeParameterIdealSplitting}. More generally, all Schubert varieties are globally F-regular \cite{LauritzenRabenThomsenGlobalFRegularityOfSchuertVarieties}.  C.f. Example \ref{ex2}.
\end{ex}

\begin{ex} A normal projective toric variety (of any characteristic) is  globally F-regular. 
The point is that there is a torus-invariant  ample divisor,  so some multiple of it will give a normal embedding into projective space. The
corresponding homogenous coordinate ring  is  a normal domain generated by monomials. All finitely generated normal rings generated by monomials are strongly F-regular since they are direct summands of the corresponding polynomial ring \cite[Exercise 6.1.10]{BrunsHerzogCMrings}. Since the homogenous coordinate ring is F-regular, the projective toric variety is globally F-regular by Theorem \ref{criterion F-split}.  
See also Example \ref{toric}.
\end{ex}

The proof of Theorem \ref{criterion F-split1} is clearest in the following context, which is only slightly more general.
 \begin{Thm} \label{criterion F-split} 
 Let $X$ be any projective  scheme over a perfect field.  The following are equivalent:
\begin{enumerate}
\item $X$ is Frobenius split;
\item the ring $S_{\mathcal{L}}=\bigoplus_{n\in \mathbb{N}}H^0(X,\mathcal{L}^n)$ is Frobenius split for all invertible sheaves $\mathcal{L}$;
\item the section ring $S_{\mathcal{L}}=\bigoplus_{n\in \mathbb{N}}H^0(X,\mathcal{L}^n)$ is Frobenius split for some ample invertible sheaf $\mathcal{L}$.
\end{enumerate}
Likewise, 
a projective variety $X$ is globally F-regular if and only if some (equivalently, every) section ring $S_{\mathcal{L}}$ with respect to an ample invertible sheaf $\mathcal{L}$ is F-regular.
\end{Thm}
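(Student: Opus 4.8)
The plan is to pass through the identification $X=\operatorname{Proj} S_{\mathcal L}$ for an ample $\mathcal L$ and to translate, in both directions, between \emph{graded} Frobenius splittings of the section ring $S:=S_{\mathcal L}=\bigoplus_{n\ge 0}H^0(X,\mathcal L^n)$ and splittings of $\mathcal O_X\to F_*\mathcal O_X$. Here $S$ is $\mathbb N$-graded and, being finitely generated over the perfect ground field, F-finite; I work with the $\tfrac1p\mathbb Z$-graded $S$-module $S^{1/p}=\bigoplus_n S_n^{1/p}$, placing $S_n^{1/p}$ in degree $n/p$, so that $S\hookrightarrow S^{1/p}$ is a graded inclusion and the relevant $\Hom$-modules inherit a grading. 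The two implications that carry content are $(1)\Rightarrow(2)$ and $(3)\Rightarrow(1)$; $(2)\Rightarrow(3)$ is immediate because a projective $X$ carries an ample $\mathcal L$.

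For $(1)\Rightarrow(2)$, given a retraction $\phi\colon F_*\mathcal O_X\to\mathcal O_X$, I would tensor with $\mathcal L^n$ and invoke the projection formula together with $F^*\mathcal L^n=\mathcal L^{pn}$ (exactly as in the proof of Theorem~\ref{vanish}) to obtain, for each $n$, an $\mathcal O_X$-linear retraction $\phi_n\colon F_*(\mathcal L^{pn})\to\mathcal L^n$ of the twisted Frobenius map $\mathcal L^n\to F_*(\mathcal L^{pn})$. Since $F$ is affine, $H^0(X,F_*(\mathcal L^{pn}))=S_{pn}$; taking $H^0$ of the $\phi_n$ and assembling them (with $S_m^{1/p}$ sent to $0$ when $p\nmid m$) gives a map $\pi\colon S^{1/p}\to S$. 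That $\pi$ is $S$-linear is the commutativity of the square comparing multiplication by a section $s\in S_d$ with multiplication by $F^*s$, which holds because all the $\phi_n$ are tensor twists of the single map $\phi$; and $\pi(1)=1$ because twisting a split injection by an invertible sheaf preserves the retraction. Hence $S$ is Frobenius split for every $\mathcal L$.

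For $(3)\Rightarrow(1)$, start from an arbitrary retraction $\pi\colon S^{1/p}\to S$ and decompose it into graded components $\pi=\sum_j\pi_j$; each $\pi_j$ is $S$-linear, and matching degrees in $\sum_j\pi_j(1)=1$ forces $\pi_0(1)=1$, so we may assume $\pi$ is homogeneous of degree $0$. Such a $\pi$ is the same thing as a family of maps $\theta_m\colon S_{pm}\to S_m$ with $\theta_0(1)=1$ subject to the semilinearity $\theta_{d+m}(s^pt)=s\,\theta_m(t)$ for $s\in S_d$, $t\in S_{pm}$. On each basic open $D_+(f)$ with $f\in S_d$, I would define $\Phi_f\colon\bigl((S_f)_0\bigr)^{1/p}\to (S_f)_0$ by $(h/f^{pj})^{1/p}\mapsto\theta_{dj}(h)/f^j$ for $h\in S_{pdj}$; well-definedness, $(S_f)_0$-linearity, and agreement on the overlaps $D_+(fg)$ all unwind to precisely the semilinearity relation above, so the $\Phi_f$ glue to a sheaf map $\phi\colon F_*\mathcal O_X\to\mathcal O_X$ with $\phi(1)=\theta_0(1)=1$, and $X$ is Frobenius split.

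The globally F-regular assertion is proved by the same machine with $F^e_*$ in place of $F_*$ and with divisorial twists inserted. Fix $\mathcal L$ ample. Ampleness supplies, for any effective Weil divisor $D$, a homogeneous $f\in S_N$ with $D\le\operatorname{div}(f)$ and $\mathcal O_X(\operatorname{div} f)=\mathcal L^N$, so it is enough to split $\mathcal O_X\to F^e_*\mathcal L^N$ and then restrict along $\mathcal O_X(D)\hookrightarrow\mathcal L^N$. If $S$ is F-regular, then passing to the degree-homogeneous component of a retraction produces an $e$ and a graded $\phi_e\in\Hom_S(S^{1/p^e},S)$ with $\phi_e(f^{1/p^e})=1$, and the chart-by-chart construction of $(3)\Rightarrow(1)$ turns this into the desired splitting; hence $X$ is globally F-regular. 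Conversely, if $X$ is globally F-regular, applying $H^0$ to the splittings along $D=\operatorname{div}(f)$ for homogeneous $f$ (as in $(1)\Rightarrow(2)$) yields, for every homogeneous $f\in S$, an $e$ and a graded retraction sending $f^{1/p^e}\mapsto 1$; that $S$ is then F-regular follows because strong F-regularity of a graded ring can be tested on homogeneous elements (equivalently, checked at the irrelevant maximal ideal; cf.\ Lemma~\ref{Fregprop} and \cite{HochsterHunekeTCStrongFRegularity}), and independence of the choice of ample $\mathcal L$ is then formal since each implication was established for arbitrary ample $\mathcal L$. I expect the main obstacle to be the bookkeeping of this $\operatorname{Proj}$-dictionary in $(3)\Rightarrow(1)$ — verifying that a graded retraction really glues to a morphism of sheaves on the charts, and that the two passages are mutually inverse — together with the homogeneous-to-arbitrary reduction on the F-regular side, which is exactly where the gap between pointwise local F-regularity and global F-regularity is located.
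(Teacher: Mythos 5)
Your proof is correct and follows essentially the same strategy as the paper's: twist the splitting of $\mathcal O_X\to F_*\mathcal O_X$ by powers of $\mathcal L$ and take global sections for $(1)\Rightarrow(2)$, and for $(3)\Rightarrow(1)$ pass to a degree-zero (homogeneous) splitting of $S^{1/p}\to S$, observe that the integer-degree part $[S^{1/p}]_{\mathbb N}$ sheafifies to $F_*\mathcal O_X$ on $\operatorname{Proj}S$, and apply the $\sim$-functor. The only difference is cosmetic: you unwind the Serre graded-module-to-sheaf dictionary explicitly on the charts $D_+(f)$ rather than invoking it wholesale, and you justify (by splitting into graded components) why the retraction may be taken homogeneous, a point the paper simply asserts.
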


\begin{proof}[Proof of Theorem \ref{criterion F-split}]
If $\mathcal{O}_X\to F_*\mathcal{O}_X$ splits, then the same is true after tensoring with any invertible sheaf $\mathcal L$. So as in the proof of Theorem \ref{vanish}, 
\[\mathcal{L}\to F_*\mathcal{O}_X\otimes \mathcal{L}=F_*F^{*}\mathcal{L}=F_*\mathcal{L}^{p},\] 
splits.   Likewise, we have a splitting after tensoring with the sheaf of algebras $\bigoplus_{n\in\mathbb N} \mathcal L^n. $
Taking global sections produces a Frobenius splitting for  the ring $S_{\mathcal L}$. So (1) implies (2). Also (2) obviously implies (3).

To see that (3) implies (1), we fix an ample invertible sheaf $\mathcal L$ on $X$. In particular, this means  that   $X $ is the scheme Proj $S_{\mathcal L}$, and coherent sheaves on $X$ correspond to finitely generated $\mathbb Z$-graded $S_{\mathcal L}$-graded modules (up to agreement in large degree).   Now, if $S_{\mathcal L}$ is Frobenius split, then we can find a homogeneous $S_{\mathcal L}$-linear splitting $S_{\mathcal L}^{1/p} \overset{\pi}{\rightarrow} S_{\mathcal L}$ such that the composition
\begin{equation}\label{eq1}
S \hookrightarrow S_{\mathcal L}^{1/p} \overset{\pi}\rightarrow S_{\mathcal L}
\end{equation} 
is the identity map. 
Note that $S_{\mathcal L}^{1/p}$ can be viewed as naturally $\frac{1}{p}\mathbb N$-graded, by defining the degree of $s^{1/p}$ to be $\frac{1}{p} \deg s$. Consider the graded $S$-submodule 
$[S^{1/p}]_{\mathbb N}$
of $S^{1/p}$ of elements of integer degree: this includes all the elements of $S$, but also elements of the form $(s)^{1/p},$ where $s$ is {\it not\/} a $p$-th power in $S$ but its degree is a multiple of $p$.  The graded map of $S$-modules 
$$
S \hookrightarrow  [S^{1/p}]_{\mathbb N} $$ 
 corresponds to the Frobenius 
 map of coherent sheaves
$\mathcal O_X \rightarrow F_*\mathcal O_X$ on $X$. The point now is that restricting the
 map $S_{\mathcal L}^{1/p} \overset{\pi}{\rightarrow} S_{\mathcal L}$  to the subgroup 
$[S^{1/p}]_{\mathbb N}$, the composition of maps of graded $S$-modules
$$
S \hookrightarrow  [S^{1/p}]_{\mathbb N} \overset{\pi}\rightarrow S
$$ gives a graded splitting of $S$-modules, 
whose  corresponding map of coherent sheaves on $X$ gives a splitting of Frobenius for $X$. The proof for global F-regularity is similar.
See \cite{SmithGloballyFReg} or \cite{SchwedeSmithLogFanoVsGloballyFRegular} for details. 
\end{proof}

\subsection{Frobenius splittings and anticanonical divisors.}\label{antican}
Summarizing the situation for curves,  we see that the existence of Frobenius splittings appears to be related to positivity of the anti-canonical divisor: 
\begin{ex}\label{curvesummary}
Among smooth projective curves over a perfect field of prime characteristic:
\begin{enumerate}
\item A genus zero curve  is {\it always\/}  globally F-regular.  
\item A genus one curve is never globally F-regular, and it is  Frobenius split if and only if it is an ordinary elliptic curve.
\item Higher genus curves are {\it never} Frobenius split (hence nor globally F-regular). \end{enumerate}
\end{ex}

 Indeed, there is a natural sense in which the sheaf of ``potential Frobenius splittings" is a sheaf of pluri-anticanonical forms. The following crucial fact, first appearing in this guise in \cite{Mehta-Ramanathan},  is at the heart of many ideas in both the local and global theories:

\begin{Lem}\label{Duality}
Let $X$ be a normal
{\footnote{If $X$ is not smooth,  the notation $\omega_X^n$ denotes the unique reflexive sheaf which agrees with the $n$-th tensor power of  $\omega_X$ on the smooth locus; equivalently, it is the double dual of the $n$-th tensor power of $\omega_X$, or equivalently, $\mathcal O_X(nK_X)$ where $K_X$ is the Weil divisor agreeing with a canonical divisor on the smooth locus.}}
 projective variety over a perfect field. Then we have
\[\mathcal{H}om_{\mathcal O_X}(F^e_*\mathcal{O}_X,\mathcal{O}_X)\cong F^e_*\omega_X^{1-p^e}.\] 
\end{Lem}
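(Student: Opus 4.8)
The plan is to reduce the global, sheaf-theoretic statement to a local, module-theoretic one via duality for the finite flat (on the smooth locus) morphism $F^e$, then to patch. First I would work on the smooth locus $U \subset X$, where $F^e : U \to U$ is a finite flat morphism of regular schemes (Theorem \ref{kunzthm}), so Grothendieck duality for finite morphisms applies: for any coherent $\mathcal{F}$ on $U$ one has $\mathcal{H}om_{\mathcal{O}_U}(F^e_*\mathcal{F}, \mathcal{O}_U) \cong F^e_*\,\mathcal{H}om_{\mathcal{O}_U}(\mathcal{F}, F^{e!}\mathcal{O}_U)$, and $F^{e!}\mathcal{O}_U = \omega_{U/U} = \omega_U \otimes (F^{e*}\omega_U)^{-1}$ by functoriality of dualizing sheaves under $F^e$. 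Since pulling back under Frobenius raises transition functions to the $p^e$-th power (as used already in the proof of Theorem \ref{vanish}), $F^{e*}\omega_U \cong \omega_U^{p^e}$, so the relative dualizing sheaf is $\omega_U^{1-p^e}$. Taking $\mathcal{F} = \mathcal{O}_U$ gives $\mathcal{H}om_{\mathcal{O}_U}(F^e_*\mathcal{O}_U, \mathcal{O}_U) \cong F^e_*\omega_U^{1-p^e}$ on $U$.

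Next I would promote this isomorphism from $U$ to all of $X$. The key point is that $X \setminus U$ has codimension at least two, because $X$ is normal (so regular in codimension one). Both sides of the claimed isomorphism are reflexive $\mathcal{O}_X$-modules: $\omega_X^{1-p^e}$ is reflexive by the convention in the footnote, hence so is its pushforward under the finite map $F^e$ (which is the identity on topological spaces and preserves the reflexivity/$S_2$ property here); and $\mathcal{H}om_{\mathcal{O}_X}(F^e_*\mathcal{O}_X, \mathcal{O}_X)$ is a $\mathcal{H}om$ into $\mathcal{O}_X$, hence automatically reflexive (indeed $S_2$). Two reflexive sheaves on a normal variety that agree in codimension one agree everywhere — a reflexive sheaf is the pushforward of its restriction to $U = X \setminus (X\setminus U)$. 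So the isomorphism constructed on $U$ extends uniquely to the claimed isomorphism on $X$.

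I expect the main obstacle to be the careful bookkeeping in identifying the relative dualizing sheaf $F^{e!}\mathcal{O}_X$ with $\omega_X^{1-p^e}$, rather than with something twisted; one must invoke the correct normalization of Grothendieck duality (finite flat case) and the identity $F^{e*}\mathcal{O}_X(K_X) \cong \mathcal{O}_X(p^e K_X)$, which requires knowing that the canonical divisor class is well-defined on the normal variety and pulls back multiplicatively under the (absolute) Frobenius. A clean alternative, which avoids heavy duality machinery, is to compute everything locally: on an affine chart $\Spec R$ with $R$ normal and $F$-finite, one shows $\Hom_R(F^e_*R, R) \cong F^e_* \Hom_R(F^e_* R, R)$ is a rank-one reflexive $F^e_*R$-module, identifies it with the module of relative Kähler differentials' top power on the regular locus, and recognizes that module as $\omega_R^{1-p^e}$; then glue. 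Either way, the genuinely non-formal input is the behavior of the canonical module under Frobenius on the regular locus, and the rest is the standard "reflexive sheaves are determined in codimension one" argument enabled by normality.
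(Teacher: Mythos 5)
Your proposal takes essentially the same route as the paper: apply Grothendieck duality for the finite morphism $F^e$ on the smooth locus to identify $\mathcal{H}om_{\mathcal{O}_X}(F^e_*\mathcal{O}_X,\mathcal{O}_X)$ with $F^e_*(\omega_X \otimes F^{e*}\omega_X^{-1})$, use that Frobenius pullback raises an invertible sheaf to the $p^e$-th power, and then extend the resulting isomorphism over the singular locus by invoking reflexivity and normality. The paper packages the duality step slightly differently (twisting by $\omega_X$ and $\omega_X^{-1}$ and then applying duality into $\omega_X$, plus the projection formula), while you state the relative dualizing sheaf $F^{e!}\mathcal{O}_U = \omega_U^{1-p^e}$ directly, but these are the same computation.
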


\begin{proof}
First assume that $X$ is smooth. 
We have  a natural isomorphism
$$ \mathcal{H}om_{\mathcal O_X}(F^e_*\mathcal{O}_X,\mathcal{O}_X)  \cong 
  \mathcal{H}om_{\mathcal O_X}(F^e_*\mathcal{O}_X,\omega_X) \otimes\omega_X^{-1}.$$ By Grothendieck duality for the finite map $F^e:X \rightarrow X$, we also have a natural isomorphism 
$$ \mathcal{H}om_{\mathcal O_X}(F^e_*\mathcal{O}_X,\omega_X) \cong  F^e_*\mathcal{H}om_{\mathcal O_X}(\mathcal{O}_X,\omega_X) \cong F^e_* \omega_X,$$
so that 
$$
\mathcal{H}om_{\mathcal O_X}(F^e_*\mathcal{O}_X,\mathcal{O}_X)\cong F^e_* \omega_X \otimes \omega_X^{-1} \cong F^e_*(\omega_X \otimes F^{e*}\omega_X^{-1}),$$
with the last isomorphism coming from the projection formula. 
Finally we have
$$\mathcal{H}om_{\mathcal O_X}(F^e_*\mathcal{O}_X,\mathcal{O}_X)\cong F^e_*(\omega_X^{1-p^e}),
$$ since pulling back an invertible sheaf under Frobenius amounts to raising it to the $p$-th power.

Now, even if $X$ is not smooth,  this proof is essentially valid. Indeed, we can carry out the same argument on the smooth locus of $X$, to produce the desired natural  isomorphism of sheaves there. Since both   sheaves $\mathcal{H}om_{\mathcal O_X}(F^e_*\mathcal{O}_X,\mathcal{O}_X)$ and $ F^e_*(\omega_X^{1-p^e})
$ are reflexive sheaves on the normal variety $X$,  this isomorphism extends uniquely to an isomorphism of ${\mathcal{O}_X}$-modules over all $X$. 
\end{proof}

Any Frobenius splitting   is a map $F_*\mathcal O_X \rightarrow \mathcal O_X,$ and hence a non-zero global section of $\mathcal{H}om_{\mathcal O_X}(F_*\mathcal{O}_X,\mathcal{O}_X) \cong F_*(\omega_X^{1-p}),
$ which in turn, is a non-zero section of $H^0(X, \omega_X^{1-p})$.  Thus, if $X$ is Frobenius split, we expect non-zero sections of $\omega_X^{1-p}$.

If $\omega_X$ is ample, then the sheaves $\omega_X^{1-p}$ are dual to ample, and can have no global sections: 

\begin{Cor}
A smooth projective variety $X$  with ample canonical bundle is never Frobenius split.
\end{Cor}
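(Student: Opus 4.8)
The plan is to combine the duality lemma with the vanishing of global sections of negative-degree line bundles. Specifically, suppose for contradiction that $X$ is Frobenius split. By Lemma \ref{Duality} applied with $e = 1$, a splitting $\phi: F_*\mathcal O_X \to \mathcal O_X$ is precisely a global section of $\mathcal Hom_{\mathcal O_X}(F_*\mathcal O_X, \mathcal O_X) \cong F_*(\omega_X^{1-p})$. Since $F$ is affine (indeed a homeomorphism on topological spaces), taking global sections is unaffected by $F_*$, so this section lives in $H^0(X, \omega_X^{1-p})$. Moreover a \emph{splitting} is a \emph{nonzero} such section (it sends $1$ to $1$), so Frobenius splitting forces $H^0(X, \omega_X^{1-p}) \neq 0$.

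Next I would derive the contradiction from the ampleness hypothesis. Since $\omega_X$ is ample and $p \geq 2$, the line bundle $\omega_X^{1-p} = (\omega_X^{-1})^{p-1}$ is the $(p-1)$-st power of an antiample line bundle, hence is itself antiample (the dual of an ample bundle). A line bundle whose dual is ample on a projective variety of positive dimension has no nonzero global sections: if $s \in H^0(X, \mathcal M)$ were nonzero with $\mathcal M^{-1}$ ample, then for $n \gg 0$ the bundle $\mathcal M^{-1} \otimes (\text{any ample})$ would be globally generated yet pairing against $s$ would exhibit $\mathcal O_X \hookrightarrow \mathcal M$, forcing $\mathcal M^{-1}$ to have a section too, and then $\mathcal O_X \cong \mathcal M$, contradicting ampleness of $\mathcal M^{-1}$ on a variety of dimension $\geq 1$. (One can phrase this more cleanly: $\deg$ of an antiample bundle restricted to any curve is negative, so it has no sections on that curve, hence none on $X$.) This contradicts $H^0(X, \omega_X^{1-p}) \neq 0$, completing the proof.

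The only genuinely delicate point is the hidden nondegeneracy assumption: if $X$ were zero-dimensional, every line bundle is trivial and the statement is vacuous or false, so implicitly $X$ has positive dimension (which is the interesting case, and is consistent with ``$\omega_X$ ample'' being a meaningful condition). I expect the main obstacle — really the only thing to be careful about — is making the passage from ``section of $F_*(\omega_X^{1-p})$'' to ``section of $\omega_X^{1-p}$'' rigorous, i.e. noting that $H^0(X, F_*\mathcal F) = H^0(X, \mathcal F)$ because $F$ is the identity on the underlying space; this is the one spot where the reader might want a word of justification rather than a routine citation. Everything else is a direct assembly of Lemma \ref{Duality} and elementary positivity.
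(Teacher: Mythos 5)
Your proposal is correct and takes essentially the same approach as the paper, which simply observes (just before stating the corollary) that a splitting gives a nonzero section of $\mathcal{H}om_{\mathcal O_X}(F_*\mathcal{O}_X,\mathcal{O}_X)\cong F_*(\omega_X^{1-p})$, hence a nonzero element of $H^0(X,\omega_X^{1-p})$, which is impossible when $\omega_X$ is ample since $\omega_X^{1-p}$ is then dual to ample. Your write-up adds the (correct and worthwhile) explicit justification that $H^0(X,F_*\mathcal F)=H^0(X,\mathcal F)$ because $F$ is the identity on the underlying space, and handles the positive-dimension caveat.
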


Even if $\omega_X^{1-p}$ has global sections,  it is not so obvious which of these might correspond to a splitting of Frobenius. 
 Mehta and Ramanathan  \cite[Prop 6]{Mehta-Ramanathan} found  a nice criterion: 
  
 \begin{Prop}\label{splitcrit} Let $X$ be a normal projective variety. 
 A section $s \in   H^0(X, \omega_X^{1-p})$ corresponds to a Frobenius splitting of $X$ if and only if there exists a smooth point $x \in X$ at which $s$ has a non-zero residue.  Explicitly, for a global  section $s \in H^0(X, \omega_X^{1-p})$, we can write the germ of $s$ at the point $x$ as  $s = f (dx_1\wedge \dots \wedge dx_n)^{1-p}$ where $x_1, \dots, x_n$ are a regular sequence of parameters at $x$ and $f \in \mathcal O_{X,x}$. 
 Now $s$ is a splitting of Frobenius if and only if 
  the  power series expansion of $f$ in the coordinates $x_i$ has a non-zero  $(x_1x_2\dots x_n)^{p-1}$ term. 
  \end{Prop}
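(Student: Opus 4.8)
The plan is to reduce the statement to a purely local computation about the polynomial (or power series) ring $\mathcal{O}_{X,x}$ at a chosen smooth point, using Lemma~\ref{Duality} as the bridge between sections of $\omega_X^{1-p}$ and maps $F_*\mathcal{O}_X \to \mathcal{O}_X$. A section $s \in H^0(X,\omega_X^{1-p})$ corresponds, via the isomorphism $\mathcal{H}om_{\mathcal{O}_X}(F_*\mathcal{O}_X,\mathcal{O}_X) \cong F_*\omega_X^{1-p}$, to a global homomorphism $\phi_s \colon F_*\mathcal{O}_X \to \mathcal{O}_X$. The key observation is that $s$ (equivalently $\phi_s$) is a Frobenius splitting if and only if $\phi_s(1) = 1$, and since $\mathcal{O}_X$ is reduced (indeed $X$ is a variety) and $\phi_s$ is $\mathcal{O}_X$-linear, it suffices to check that $\phi_s(1)$ is a unit, i.e.\ nonvanishing, at a single point; but actually the cleanest route is to check $\phi_s(1)=1$ after localizing and completing at a smooth point $x$, where everything becomes an explicit statement about $k[[x_1,\dots,x_n]]$.

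First I would set up the local picture: at a smooth point $x$, choose regular parameters $x_1,\dots,x_n$, so $\widehat{\mathcal{O}_{X,x}} \cong k[[x_1,\dots,x_n]]$ (assume $k$ perfect so residue field issues vanish), and identify $F_*\mathcal{O}_X$ locally with the free module on the monomial basis $\{x_1^{a_1}\cdots x_n^{a_n} : 0 \le a_i < p\}$ over the subring of $p$-th powers. Under the duality of Lemma~\ref{Duality}, the generator of $\mathcal{H}om$ dual to the basis element $(x_1\cdots x_n)^{p-1}$ is precisely the ``trace-like'' projection $\Phi_0$ that sends $(x_1\cdots x_n)^{p-1}$ to $1$ and kills all other basis monomials; this is exactly the map corresponding to the section $(dx_1\wedge\cdots\wedge dx_n)^{1-p}$. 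Then a general section $s = f\,(dx_1\wedge\cdots\wedge dx_n)^{1-p}$ corresponds to the map $\phi_s \colon g \mapsto \Phi_0(fg)$, and I would compute $\phi_s(1) = \Phi_0(f)$. Expanding $f = \sum_{\mathbf{a}} c_{\mathbf{a}}(x^p) x^{\mathbf{a}}$ in the monomial basis (with $0 \le a_i < p$ and $c_{\mathbf{a}} \in k[[x_1,\dots,x_n]]$ rewritten in terms of $p$-th powers), one sees $\Phi_0(f)$ picks out the coefficient-function attached to $x^{(p-1,\dots,p-1)}$, evaluated appropriately; in particular $\phi_s(1)$ is a unit at $x$ precisely when the coefficient of $(x_1\cdots x_n)^{p-1}$ in the power series expansion of $f$ has nonzero constant term, i.e.\ is nonzero mod $\mathfrak{m}_x$.

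The remaining step is to connect ``$\phi_s(1)$ is a unit at some smooth point'' with ``$\phi_s$ is a splitting globally.'' This is where I would invoke the standard argument: if $\phi_s(1) = u$ with $u$ a unit in a neighborhood of $x$, then $u^{-1}\phi_s$ is a local splitting; but a nonzero global section of $\mathcal{H}om(F_*\mathcal{O}_X,\mathcal{O}_X)$ that gives a splitting somewhere — more precisely, the function $\phi_s(1) \in H^0(X,\mathcal{O}_X)$ — if $X$ is projective and connected, $H^0(X,\mathcal{O}_X) = k$, so $\phi_s(1)$ is a \emph{constant}; hence it is $1$ (up to scaling $s$) as soon as it is nonzero at one point. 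I would spell out that rescaling $s$ by this constant turns $\phi_s$ into an honest splitting, and conversely any splitting $\phi$ has $\phi(1)=1 \ne 0$ so its associated section has nonzero residue at every smooth point where the local expansion is valid. For the converse direction of the biconditional I only need: a splitting has $\phi(1)=1$, which forces the $(x_1\cdots x_n)^{p-1}$-coefficient of $f$ to be nonzero at the relevant point.

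The main obstacle I expect is making the identification of the duality isomorphism of Lemma~\ref{Duality} \emph{explicit} at the level of the monomial basis — i.e.\ verifying carefully that the section $(dx_1\wedge\cdots\wedge dx_n)^{1-p}$ really corresponds to the projection $\Phi_0$ onto the top monomial $(x_1\cdots x_n)^{p-1}$, and tracking how multiplication by $f$ interacts with this under the $F_*\mathcal{O}_X$-module structure. This is essentially the ``Cartier operator'' bookkeeping; it is routine but must be done with care about which copy of the ring acts by $p$-th powers. One also needs the mild point that every nonzero $\mathcal{O}_X$-linear $\phi\colon F_*\mathcal{O}_X \to \mathcal{O}_X$ with $\phi(1)$ a unit is automatically surjective, hence splits Frobenius, which follows immediately once $\phi(1)$ is a unit. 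Given the projectivity and connectedness of $X$, the global-to-local passage is then immediate from $H^0(X,\mathcal{O}_X)=k$.
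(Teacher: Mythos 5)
The paper does not actually give a proof of Proposition~\ref{splitcrit}; it states it and cites Mehta--Ramanathan. Your proposal reconstructs the standard argument, and it is correct. The two ingredients you isolate are exactly the right ones: (i) under the isomorphism of Lemma~\ref{Duality} the section $s = f\,(dx_1\wedge\cdots\wedge dx_n)^{1-p}$ corresponds to $\phi_s = \Phi_0\circ(\text{mult.\ by }f)$, where $\Phi_0$ is the ``Cartier/trace'' generator that projects onto the $(x_1\cdots x_n)^{p-1}$ component of the monomial basis (so $\phi_s(1)=\Phi_0(f)$, whose value at $x$ is the $p$-th root of the coefficient of $(x_1\cdots x_n)^{p-1}$ in $f$ --- a nonzero scalar iff that coefficient is nonzero, $k$ being perfect); and (ii) $\phi_s\circ F^\#$ is $\mathcal O_X$-linear, hence is multiplication by $\phi_s(1)\in H^0(X,\mathcal O_X)=k$, so $\phi_s(1)$ is a constant, and being nonzero at one smooth point forces it to be a nonzero constant, whence $\phi_s$ is a splitting after the obvious rescaling of $s$. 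You are also right to flag the two small technical points: the identification of $(dx_1\wedge\cdots\wedge dx_n)^{1-p}$ with $\Phi_0$ under Grothendieck duality is the only real bookkeeping, and the literal biconditional in the statement is an ``up to scalar'' assertion (if the coefficient is a nonzero constant other than $1$, it is $c^{-1}\phi_s$, not $\phi_s$, that splits Frobenius), which you handle correctly. This is essentially the argument of Mehta--Ramanathan~[Prop.~6] and Brion--Kumar; no gap.
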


Summarizing this in the language of divisors:  the non-zero mappings $F_*\mathcal O_X \rightarrow \mathcal O_X$ correspond to effective divisors in the linear system $|(1-p)K_X|. $ 
Given a particular divisor $D$ in $|(1-p)K_X|$, it is a splitting of Frobenius if and only if in local analytic coordinates at some smooth point $x \in X$, the divisor $D$ is $(p-1)$ times a simple normal crossing divisor whose components intersect exactly in $\{x\}$. 

\begin{ex}\label{toric} One easy case  in which  Frobenius splitting can be established using Proposition \ref{splitcrit} 
is when   a smooth projective variety $X$ of dimension $n$  admits $n$ effective divisors $D_1, \dots, D_n$ meeting transversely at a point of $X$ and  whose sum is an  anti canonical divisor. Projective space  obviously has this property (taking $-K_X$ to be the sum of the coordinate hyperplanes).  Similarly, a projective toric variety is Frobenius split as well, since  $-K_X$ is the sum of all the torus invariant divisors \cite[page 85]{FultonToricBook}.  Again, we recover that fact that smooth projective toric varieties are Frobenius split. Cf. Example \ref{toric}.
\end{ex}

Similarly,  if $X$ is globally F-regular, we expect many global sections of $\omega_X^{1-p^e}$:  for each $D$, the splitting $  F^e_* \mathcal O_X(D) \overset{t}\rightarrow \mathcal O_X$ induces a different splitting $F^e_*\mathcal O_X \rightarrow \mathcal O_X$, so  gives rise to a non-zero element global section of $\omega_X^{1-p^e}$.   Varying $D$, we get many sections of $\omega_X^{1-p^e}$--- so many that they grow polynomially in $p^e$:

\begin{Cor}\label{bigantican}
If a smooth projective variety is globally F-regular, then its anti-canonical bundle
 is big.{\footnote{A line bundle $\mathcal L$ on a projective variety $X$ is {\it big\/} if the space of  global sections $H^0(X, \mathcal L^n)$ grows as a polynomial of degree $\dim X$ in $n$; See \cite[Section 2.2]{LazPAG1}.}}
\end{Cor}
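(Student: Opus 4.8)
The plan is to produce, for some exponent $e$, a linear equivalence $-(p^e-1)K_X\sim A+E$ in which $A$ is ample and $E$ is effective; Kodaira's lemma then makes $-(p^e-1)K_X$ big, and bigness is insensitive to the positive scaling by $p^e-1$, so $-K_X$ is big. The effective divisor $E$ will come from a single Frobenius splitting along an ample divisor, read off through the duality isomorphism of Lemma~\ref{Duality}.

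The one new ingredient is a twisted form of Lemma~\ref{Duality}. Since $X$ is smooth, for any effective (Cartier) divisor $D$ the same computation as in the proof of Lemma~\ref{Duality}---Grothendieck duality for the finite flat morphism $F^e\colon X\to X$, whose relative dualizing sheaf is $\omega_X\otimes F^{e*}\omega_X^{-1}=\omega_X^{1-p^e}$, followed by the projection formula---gives
\[
\mathcal{H}om_{\mathcal O_X}\bigl(F^e_*\mathcal O_X(D),\,\mathcal O_X\bigr)\;\cong\;F^e_*\bigl(\omega_X^{1-p^e}(-D)\bigr).
\]
Taking global sections and using that $F^e$ is a homeomorphism (so $H^0(X,F^e_*\mathcal F)=H^0(X,\mathcal F)$) yields $\Hom_{\mathcal O_X}(F^e_*\mathcal O_X(D),\mathcal O_X)\cong H^0(X,\omega_X^{1-p^e}(-D))$. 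A splitting of $\mathcal O_X\to F^e_*\mathcal O_X(D)$ is in particular a \emph{nonzero} element of the left-hand side (it composes with $\mathcal O_X\to F^e_*\mathcal O_X(D)$ to the identity, which is nonzero), so it produces a nonzero section of $\omega_X^{1-p^e}(-D)$; equivalently, $-(p^e-1)K_X-D$ is linearly equivalent to an effective divisor.

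Now I would finish. If $\dim X=0$ then $\omega_X=\mathcal O_X$ and the claim is trivial, so assume $\dim X\ge 1$. Choose an ample line bundle $\mathcal O_X(A_0)$; for $m\gg 0$ the sheaf $\mathcal O_X(mA_0)$ is globally generated, hence has a nonzero section, whose zero divisor $A$ is effective with $\mathcal O_X(A)\cong\mathcal O_X(mA_0)$ ample. Apply Definition~\ref{defFreg} to the effective divisor $D=A$: by global F-regularity there is an $e$ for which $\mathcal O_X\to F^e_*\mathcal O_X(A)$ splits. By the previous paragraph this forces $-(p^e-1)K_X\sim A+E$ for some effective divisor $E$. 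Since $A$ is ample and $E$ is effective, $-(p^e-1)K_X$ is big by Kodaira's lemma \cite[\S2.2]{LazPAG1}; since a $\Q$-Cartier class is big if and only if some positive integer multiple of it is big, $-K_X$ is big as well.

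There is no serious obstacle here: the only points requiring a little care are the twisted version of Lemma~\ref{Duality} (formally identical to what is already proved there, since $X$ is smooth) and the elementary remark that the auxiliary ample divisor may be taken effective, which is what lets Definition~\ref{defFreg} apply to it. If one preferred to see the polynomial growth $h^0(X,\mathcal O_X(-NK_X))\sim cN^{\dim X}$ directly, one could instead iterate the splitting $\mathcal O_X\to F^e_*\mathcal O_X(A)$---composing it with its own Frobenius pushforwards---to obtain splittings of $\mathcal O_X\to F^{ke}_*\mathcal O_X\bigl(\tfrac{p^{ke}-1}{p^e-1}A\bigr)$ for all $k\ge 1$, hence $h^0(X,\omega_X^{1-p^{ke}})\ge h^0\bigl(X,\mathcal O_X(\tfrac{p^{ke}-1}{p^e-1}A)\bigr)$, which grows like $(p^{ke})^{\dim X}$ because $A$ is ample; but invoking Kodaira's lemma makes this bookkeeping unnecessary.
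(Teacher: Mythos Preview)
Your proof is correct and follows essentially the same route as the paper: both establish the twisted duality $\mathcal{H}om_{\mathcal O_X}(F^e_*\mathcal O_X(D),\mathcal O_X)\cong F^e_*(\omega_X^{1-p^e}(-D))$, apply global F-regularity with an ample effective divisor $A$ to obtain a nonzero section of $\omega_X^{1-p^e}(-A)$, and conclude $-(p^e-1)K_X\sim A+E$ is big via Kodaira's lemma (the paper cites this as ``ample plus effective'' from \cite[Cor.~2.2.7]{LazPAG1}). Your added care in justifying why the ample divisor can be chosen effective, and the remark on the zero-dimensional case, are welcome but do not change the argument.
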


\begin{proof} 
For  any Weil divisor $D$ on a normal variety $X$,  the proof of Lemma \ref{Duality} immediately generalizes to give a 
 natural isomorphism
$$\mathcal{H}om_{\mathcal O_X}(F^e_*\mathcal{O}_X(D),\mathcal{O}_X)  \cong F_*^e(\omega_X^{1-p^e}(-D)). $$
Now, let $X$ be globally F-regular, and let $A$ be any ample effective Cartier divisor. By definition, there exists an $e$ and a global non-zero map  $t:  F^e_*\mathcal O_X(A) \overset{t}{\rightarrow}   \mathcal O_X$ splitting 
  the  composition 
 $
 \mathcal O_X \rightarrow 
  F^e_*\mathcal O_X(A).$ The map $t$ can be viewed thus be viewed as a non-zero global section of 
  $ \mathcal{H}om_{\mathcal O_X}(F^e_*\mathcal{O}_X(A),\mathcal{O}_X)$, hence a non-zero element of 
  $$ H^0(X, F_*^e(\omega_X^{1-p^e}(-A))) = H^0(X, \omega_X^{1-p^e}(-A)) \subset  H^0(X, \omega_X^{1-p^e}). $$ Let $E$ be  the effective divisor  of the section $t$, so that 
  $ E \in |(1-p^e)K_X - A|$, whereby  
  $$ E + A \in  |(1-p^e)K_X|.$$
  Finally, we see that $-K_X$ is $\mathbb Q$-linearly equivalent to $\frac{1}{p^e-1}(A + E)$, so that $-K_X$ is  $\mathbb Q$-linearly equivalent to ``ample plus effective." That is, $-K_X$ is big by Corollary 2.2.7 in \cite{LazPAG1}.
  \end{proof}

Unfortunately, bigness  of $ \omega_X^{1-p^e}$ is not sufficient for global  F-regularity.
For example,  a ruled surface over an elliptic curve is never strongly F-regular, but its 
anti-canonical divisor can be big. 
See \cite[Example 6.7]{SchwedeSmithLogFanoVsGloballyFRegular}.

On the other hand,  there is strengthened form of ``almost amplitude  of $-K_X$" which guarantees enough good sections of $\omega_X^{1-p^e}$ to find lots of splittings of Frobenius:

\begin{defi}\label{logFanodef}
A normal projective variety $X$ is  {\it log Fano} if there exists an effective $\mathbb Q$-divisor $\Delta$ on $X$ such that 
\begin{enumerate}
\item $-K_X-\Delta$ is ample; and
\item the pair $(X,\Delta)$ has (at worst)  Kawamata log terminal singularities\footnote{Kawamata log terminal singularity is usually defined in characteristic 0, but it can be defined in any characteristic
by considering {\it all} birational proper maps as follows. Let $X$ be a normal variety and $\Delta$ be an effective $\Q$-divisor on $X$. Then $(X,\Delta)$ is called {\it Kawamata log terminal} if 
\begin{enumerate}
\item $K_X+\Delta$ is $\Q$-Cartier; and
\item for {\it all} birational proper maps $\pi:Y\to X$, choosing $K_Y$ so that $\pi_*K_Y = K_X$, each coefficient of $\pi^*(K_X+\Delta)-K_Y$ is strictly less than 1.
\end{enumerate} 
}.
\end{enumerate} 
\end{defi}

If $X$ is smooth and  $\omega_X^{-1}$ ample (that is, if $X$ is Fano), then $X$ is log Fano: 
we can take $\Delta =0$.  
For log Fano varieties in general, $-K_X$ is not ample,  but it is close to ample in the sense that it is big and even more:  it is ``close" to the ample cone in the sense that we can obtain it  from  the ample divisor ($-K_X - \Delta$) by adding only the very small effective $\mathbb Q$-divisor $\Delta$ whose singularities are highly controlled.

We can now state a pair of theorems which can be viewed as a sort of  geometric characterization of globally F-regular varieties:

\begin{Thm}\label{FanoGlob}
 \cite{SmithGloballyFReg} \cite{SchwedeSmithLogFanoVsGloballyFRegular}
If $X$ is a globally F-regular projective variety of characteristic $p$, then $X$ is log Fano. \end{Thm}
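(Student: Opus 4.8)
The plan is to produce, for a globally F-regular projective variety $X$, an explicit effective $\mathbb Q$-divisor $\Delta$ witnessing that $X$ is log Fano. The starting point is Corollary \ref{bigantican} and its proof: for a chosen ample effective Cartier divisor $A$, global F-regularity gives an $e$ and a nonzero map $t \colon F^e_*\mathcal O_X(A) \to \mathcal O_X$ splitting $\mathcal O_X \to F^e_*\mathcal O_X(A)$, and hence (via the isomorphism $\mathcal{H}om_{\mathcal O_X}(F^e_*\mathcal O_X(A),\mathcal O_X) \cong F^e_*(\omega_X^{1-p^e}(-A))$) an effective divisor $E \in |(1-p^e)K_X - A|$. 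Setting $\Delta := \tfrac{1}{p^e-1}E$, one has $-K_X - \Delta \sim_{\mathbb Q} \tfrac{1}{p^e-1}A$, which is ample, so condition (1) of Definition \ref{logFanodef} holds automatically. The entire content of the theorem is therefore condition (2): that $(X,\Delta)$ is Kawamata log terminal.

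The key step is to extract klt-ness from the \emph{splitting} property of $t$, not merely its existence. The idea: because $t$ is a splitting, one can iterate. Composing the splittings for $e$ and $2e$ (using that $\mathcal O_X \to F^e_*\mathcal O_X(A)$ splits, pushing forward by $F^e_*$, tensoring, and composing) produces splittings $t_n \colon F^{ne}_*\mathcal O_X(A_n) \to \mathcal O_X$ for a sequence of divisors $A_n$ that, after dividing by $p^{ne}-1$, give $\mathbb Q$-divisors $\Delta_n$ all $\mathbb Q$-linearly equivalent to $-K_X - (\text{ample})$ with \emph{uniformly bounded} denominators in a suitable sense — more precisely, the associated divisors $E_n$ satisfy $\tfrac{1}{p^{ne}-1}E_n \le \Delta$ up to a controlled error, because a genuine \emph{splitting} (as opposed to an arbitrary nonzero map) forces the order of vanishing of the section along every divisorial valuation to grow strictly slower than $p^{ne}-1$. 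This is exactly the mechanism behind Proposition \ref{splitcrit}: at a smooth point the splitting section has a nonzero $(x_1\cdots x_n)^{p-1}$ coefficient, so its divisor does \emph{not} contain $p-1$ times anything passing through that point — this local non-containment, propagated along a log resolution, is what keeps discrepancies $> -1$.

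Concretely, I would: (i) fix $A$ and get $\Delta$ as above; (ii) take a log resolution $\pi \colon Y \to X$ of $(X, \Delta)$ and write $\pi^*(K_X + \Delta) - K_Y = \sum a_i E_i$; (iii) suppose for contradiction some $a_i \ge 1$; (iv) pull back the iterated splittings $t_n$ to $Y$, where they become nonzero maps $F^{ne}_* \mathcal O_Y(\lceil \pi^*(A + (p^{ne}-1)\Delta)\rceil - R_n) \to \mathcal O_Y$ (with $R_n$ the relative canonical correction), and observe that the coefficient of $E_i$ in the twisting divisor grows like $(p^{ne}-1)a_i \ge p^{ne}-1$; (v) conclude that such a map, restricted near the generic point of $E_i$, would have to send $1$ into $\mathfrak{m}$ — i.e. cannot be a splitting after pushforward — contradicting that $t_n$ splits. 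This is the delicate part and the main obstacle: making the valuation/coefficient bookkeeping on $Y$ precise, and handling the rounding ($\lceil \cdot \rceil$) so that the strict inequality $a_i < 1$ is what survives rather than $a_i \le 1$. The cleanest route is probably to cite the equivalence, in the affine local picture, between strong F-regularity of a section ring and klt-ness of the corresponding pair (Hara–Watanabe, Theorem \ref{mmp}-type results), applied to the section ring $S_A = \bigoplus_n H^0(X, \mathcal O_X(nA))$: by Theorem \ref{criterion F-split}, $X$ globally F-regular forces $S_A$ to be F-regular, hence (in characteristic zero lifts, or directly via the dictionary) the cone pair is klt, and the klt property of $(X,\Delta)$ for an appropriate boundary $\Delta$ supported away from the vertex then descends from the cone. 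I would present the argument in this second, more structural form, relegating the valuation computation to a reference (\cite{SchwedeSmithLogFanoVsGloballyFRegular}), since reproducing it in full is the genuinely technical heart of the matter.
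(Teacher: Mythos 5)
Your framework is right on target: construct $\Delta$ from a splitting through $F^e_*\mathcal O_X(A)$, observe $-K_X - \Delta \sim_{\mathbb Q} \tfrac{1}{p^e-1}A$ is ample and $K_X+\Delta$ is $\mathbb Q$-Cartier, and recognize that klt-ness of $(X,\Delta)$ is the real content. The paper itself only cites \cite{SchwedeSmithLogFanoVsGloballyFRegular} for this theorem, and the proof there proceeds exactly along the opening of your plan.

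However, your two proposed mechanisms for klt-ness both have problems, and the clean bridge --- which the paper has already set up for you --- is missing. Route (b), through the section ring $S_A$: the ring $S_A$ is not Gorenstein, nor even $\mathbb Q$-Gorenstein, in general, so Theorem~\ref{mmp} (which is stated for Gorenstein $R$) does not apply, and ``the cone pair is klt'' would require first producing a boundary divisor on the cone making it $\mathbb Q$-Gorenstein and then invoking a nontrivial cone/base correspondence for klt singularities. This is not a simplification; it adds hypotheses you have not verified. Route (a), the iterated-splitting valuation argument on a log resolution, amounts to reproving the Hara--Watanabe theorem from scratch in this special case; you acknowledge that the bookkeeping of roundings and the strictness of $a_i < 1$ is exactly where you get stuck.

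The missing step is this: show that the \emph{pair} $(X, \Delta)$ is globally (hence locally) sharply F-split --- indeed, after shrinking $\Delta$ by a small ample multiple, globally F-regular as a pair. The point is that your splitting $t\colon F^e_*\mathcal O_X(A) \to \mathcal O_X$ and its Grothendieck-dual incarnation as a section cutting out $(p^e-1)\Delta + A$ is, essentially by unwinding definitions, precisely a splitting of $\mathcal O_X \to F^e_*\mathcal O_X(\lceil(p^e-1)\Delta\rceil + A)$, which is what global F-regularity of the pair requires for one effective divisor; for all effective $D$ one iterates, as in your item (iv), but now the bookkeeping is just the composition law for splittings of pairs, with no log resolution in sight. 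Once $(X,\Delta)$ is a locally F-regular pair, Theorem~\ref{equivofsings}(1) --- already in the paper, and exactly the pairs form of Hara--Watanabe that you wanted --- gives klt directly, with no detour through cones. The valuation computation you tried to reproduce in (a) lives inside the proof of Theorem~\ref{equivofsings}, where it belongs; it should be cited, not redone.
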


The converse isn't quite true because of irregularities in small characteristic. For example, the cubic hypersurface defined by $x^3 + y^3 + z^3 + z^3$ in $\mathbb P^3$ is obviously a smooth Fano variety (hence log Fano) in every characteristic $p \neq 3$. But it is not globally F-regular or even Frobenius split in characteristic two. However, it {\it is\/} globally F-regular for all characteristics $p \geq 5$.  
In general we have

\begin{Thm}\label{FanoGlobchar0} \cite{SmithGloballyFReg} \cite{SchwedeSmithLogFanoVsGloballyFRegular}
If $X$ is a log Fano variety of characteristic zero, then $X$ has globally F-regular type.
\end{Thm}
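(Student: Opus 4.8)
The plan is to reduce the statement to the corresponding \emph{affine} assertion about a section ring, where the known dictionary between klt singularities and F-regular type in Definition \ref{cha0} can be applied. Fix an effective $\Q$-divisor $\Delta$ as in Definition \ref{logFanodef}, so that $-(K_X+\Delta)$ is ample $\Q$-Cartier and $(X,\Delta)$ is klt, and choose $m_0\gg 0$ so that $\mathcal L:=\mathcal O_X(-m_0(K_X+\Delta))$ is a very ample line bundle with $X=\operatorname{Proj}S$, where $S=\bigoplus_{n\ge 0}H^0(X,\mathcal L^n)$ is the associated section ring. By Theorem \ref{criterion F-split}, in any fixed positive characteristic $X$ is globally F-regular if and only if $S$ is strongly F-regular; moreover, after inverting finitely many primes, the formation of $S$ commutes with reduction modulo $p$, so $S_p$ is the section ring of $X_p$ with respect to $\mathcal L_p$. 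Hence it suffices to prove that $S$ has F-regular type in the sense of Definition \ref{cha0} (which, recall, is independent of the chosen model).

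The geometric heart of the argument is the identification of $\Spec S$ with the affine cone over the polarized pair $(X,\Delta,\mathcal L)$, equipped with the boundary $\Delta_S$ that is the closure of the cone over $\Delta$, together with the claim that this cone pair is klt. This is the standard discrepancy computation for cones: blowing up the vertex replaces $\Spec S$ by the total space $Y$ of $\mathcal L^{-1}$ over $X$, whose zero section $E\cong X$ satisfies $E|_E=m_0(K_X+\Delta)$ and $\Delta_S$-transform $\Delta_Y$ with $\Delta_Y|_E=\Delta$; adjunction along $E$ then gives discrepancy $a_E=\tfrac1{m_0}-1>-1$, while all other exceptional discrepancies exceed $-1$ because $(X,\Delta)$ is klt. (When $X$ is singular one first passes to a log resolution of $(X,\Delta)$ and cones over that; the only real content is the bookkeeping of $\Delta$, and this is exactly Koll\'ar's criterion for when a cone over a polarized pair is klt.)

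Granting this, I would then invoke the characteristic-zero-to-$p$ comparison theorem for klt pairs: a klt pair over a field of characteristic zero has strongly F-regular type, i.e.\ a Zariski-dense set of its reductions are strongly F-regular pairs --- this is the theorem of Hara and Watanabe \cite{HaraWatanabe} (in its form for pairs). Applying it to $(\Spec S,\Delta_S)$ yields a dense set of primes $p$ for which $(S_p,\Delta_{S,p})$ is a strongly F-regular pair; in particular $S_p$ itself is strongly F-regular, since a splitting $\phi\in\Hom(S_p^{1/p^e},S_p)$ witnessing F-regularity of the pair, precomposed with multiplication by the $(p^e-1)$-th power of a section cutting out $\Delta_{S,p}$, witnesses F-regularity of $S_p$. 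Therefore $S$ has F-regular type, and by the equivalences of the first paragraph, $X_p$ is globally F-regular for a dense set of $p$; that is, $X$ has globally F-regular type.

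The main obstacle --- where essentially all of the difficulty is concentrated --- is the input from \cite{HaraWatanabe}: the implication ``klt $\Rightarrow$ strongly F-regular type'' is itself a substantial reduction-modulo-$p$ argument resting on resolution of singularities in characteristic zero, Kawamata--Viehweg vanishing, and a careful spreading-out over a finitely generated $\Z$-algebra. A secondary point needing care is the base-change claim of the first paragraph: one must ensure that for $p\gg 0$ the relevant higher cohomology of powers of $\mathcal L$ over the base vanishes so that $H^0(X,\mathcal L^n)$ and the boundary $\Delta_S$ spread out compatibly with reduction, which is routine but must be verified. (One can instead avoid the cone and argue directly in characteristic $p$: combine the pluri-anticanonical section criterion implicit in the proof of Corollary \ref{bigantican}, the local statement that klt pairs reduce to strongly F-regular pairs, and Serre vanishing to glue local splittings of $\mathcal O_{X_p}\to F^e_*\mathcal O_{X_p}(D)$ into a global splitting as in Theorem \ref{testelement}; but the cohomological bookkeeping there is heavier and it still rests on the same characteristic-zero input.)
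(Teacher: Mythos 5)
The paper states Theorem~\ref{FanoGlobchar0} with citations but no proof; however, the surrounding discussion (after Theorem~\ref{equivofsings}) makes clear the intended argument is exactly yours: pass to a section ring via Theorem~\ref{criterion F-split}, check the affine cone with coned-off boundary is klt via the standard discrepancy computation, and then apply Hara--Watanabe's ``klt $\Rightarrow$ strongly F-regular type'' for pairs. Your proposal is correct and follows the same route as the cited references, modulo the routine spreading-out/base-change verifications you flag.
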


Remarkably, the converse to Theorem \ref{FanoGlobchar0} is open: we do not know whether a globally F-regular type variety must be log Fano. This may seem surprising at first glance.
If $X$ has globally F-regular type, then in each characteristic $p$ model, the proof of Theorem \ref{FanoGlob} constructs a  ``witness" divisor $\Delta_p$ establishing that the pair  $(X_p, \Delta_p) $ is log Fano. But $\Delta_p$ depends on $p$ and there is no {\it a priori} reason that the $\Delta_p$ all come from some divisor $\Delta$ on the characteristic zero variety $X$.

\begin{Conj}\label{logFanoconj}
A projective globally F-regular type variety (of characteristic zero) is log Fano. 
\end{Conj}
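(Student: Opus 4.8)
\smallskip
\noindent\emph{A proposed strategy for Conjecture~\ref{logFanoconj}.} The plan is to build, out of the characteristic-$p$ witness divisors $\Delta_p$, a single $\mathbb Q$-divisor $\Delta$ on the characteristic-zero variety $X$. One begins with the cheap reductions. By Theorem~\ref{FanoGlob}, for a Zariski dense set of primes $p$ the model $X_p$ is globally F-regular, so there is an effective $\mathbb Q$-divisor $\Delta_p$ with $-K_{X_p}-\Delta_p$ ample and $(X_p,\Delta_p)$ klt. Fix a relatively ample divisor $H$ on the family $\mathcal X\to\Spec A$; fiberwise intersection numbers are then constant in $p$, and writing $n=\dim X$, from $\Delta_p\ge 0$ together with ampleness of $-K_{X_p}-\Delta_p$ one reads off $0\le \Delta_p\cdot H^{n-1}<(-K_X)\cdot H^{n-1}$, so the $\Delta_p$ have uniformly bounded degree, and $-K_X$ is big (the characteristic-zero shadow of Corollary~\ref{bigantican}, obtained by spreading out). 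Since each $X_p$ is in particular normal and locally F-regular, the generalization of Theorem~\ref{mmp}(1) to non-Gorenstein $X$ due to Hara and Watanabe \cite{HaraWatanabe} shows that $X$ itself has klt singularities. But ``$X$ klt with $-K_X$ big'' is strictly weaker than log Fano---witness a ruled surface over an elliptic curve---so the entire content is to manufacture the boundary $\Delta$.

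The crux, and the step I expect to be the true obstacle, is a \emph{uniform boundedness} statement: that the pairs $(X_p,\Delta_p)$---or, after re-choosing, replacements $(X_p,\Delta_p')$ that are still klt with $-K_{X_p}-\Delta_p'$ ample---lie in a bounded family, i.e. in finitely many components of a relative Chow or Hilbert scheme over an open subscheme of $\Spec A$, with coefficients drawn from one fixed finite set. The divisor $\Delta_p$ coming out of the proof of Theorem~\ref{FanoGlob} from a splitting $F^{e}_*\mathcal O_{X_p}(A)\to\mathcal O_{X_p}$ naturally has coefficients in $\tfrac{1}{p^{e}-1}\mathbb Z$ with both $p$ and $e$ unbounded, so the denominators blow up and the coefficients can tend to $1$; bounded degree alone does not bound the family. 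What one wants is a re-choice of $\Delta_p$ that is $\varepsilon$-klt for a single $\varepsilon>0$ independent of $p$, after which the boundedness theorem for $\varepsilon$-log Fano pairs (the Borisov--Alexeev--Borisov phenomenon, a theorem of Birkar), applied uniformly in the characteristic, would supply the bounded family. Equivalently, in the language of Cox rings: $X_p$ globally F-regular forces $\mathrm{Cox}(X_p)$ to be finitely generated and strongly F-regular, and one wants the number and degrees of a minimal generating set and of the defining relations of $\mathrm{Cox}(X_p)$ to be bounded independently of $p$, so that these rings are the reductions modulo $p$ of one fixed finitely generated $\mathbb Z$-algebra. Extracting a uniform $\varepsilon$---or a uniform bound on the Cox presentation---from global F-regularity, presumably via a lower bound on an appropriate global F-signature, is exactly the input I do not see how to supply, and is no doubt why the conjecture is still open.

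Granting the boundedness, the descent is routine. A bounded family means that infinitely many of the $(X_p,\Delta_p')$ are the fibers of a single $\mathbb Q$-divisor $\mathcal D$ on $\mathcal X$ over an open $U\subseteq\Spec A$; set $\Delta:=\mathcal D|_X$ on the characteristic-zero fiber. Ampleness of $-K_X-\Delta$ descends from ampleness of $-K_{X_p}-\Delta_p'$ on a dense set of fibers, ampleness being an open condition in families. For the klt property, take a log resolution of $(X,\Delta)$ in characteristic zero and spread it out over $U$: for $p\gg 0$ its reduction is a proper birational model of $(X_p,\Delta_p')$ on which the discrepancies agree with those computed on the characteristic-zero resolution (fiberwise intersection numbers again being constant), so the klt inequalities for $(X_p,\Delta_p')$---which hold on \emph{every} birational model by definition---force the discrepancies of $(X,\Delta)$ along a log resolution to exceed $-1$; hence $(X,\Delta)$ is klt and $X$ is log Fano. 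One can also run the whole argument through the Cox-ring dictionary: once the uniform presentation bound is available, dense strong F-regularity of the $\mathrm{Cox}(X_p)$ makes $\mathrm{Cox}(X)$ of F-regular---equivalently klt---type, and the known characterization of varieties of Fano type via the singularities of their Cox rings completes the proof.
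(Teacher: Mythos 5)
This is an open conjecture, not a theorem: the paper offers no proof, and explicitly records that the converse to Theorem~\ref{FanoGlobchar0} is unknown in general. The only progress cited is the theorem of Gongyo, Okawa, Sannai and Takagi \cite{GongyoOkawaSannaiTakagi}, who prove it under the additional hypothesis that $X$ is a $\mathbb Q$-factorial Mori Dream space. So there is no ``paper's proof'' to compare against, and no proposal could be correct in the sense of closing the gap.

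That said, you have written a thoughtful and essentially accurate research plan, and you correctly locate the true obstruction: the witness divisors $\Delta_p$ produced by Theorem~\ref{FanoGlob} have coefficients in $\tfrac{1}{p^e-1}\mathbb Z$ with $p,e\to\infty$, so their log discrepancies are not bounded away from the klt threshold as $p$ varies; without a uniform $\varepsilon$ (or, equivalently, a uniform bound on a Cox presentation) there is no bounded family to descend. Your Cox-ring reformulation is in fact close to the GOST argument, and the missing input---finite generation of $\mathrm{Cox}(X)$, together with uniformity in $p$---is exactly why their result is conditional on the Mori Dream space hypothesis, as the paper's surrounding discussion already hints.

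One cheap step in your sketch also deserves a caveat. You assert that local F-regularity of the $X_p$ plus Hara--Watanabe yields that $X$ itself is klt. In the non-Gorenstein setting, klt requires a choice of boundary $\Delta$ with $K_X+\Delta$ $\mathbb Q$-Cartier, and Theorem~\ref{equivofsings} carries precisely that $\mathbb Q$-Cartier hypothesis. Without first producing such a $\Delta$---which is most of the work---the conclusion ``$X$ is klt'' is not available; at best one knows the local test ideals are trivial. Similarly, the final descent of discrepancies requires care, since the paper's characteristic-$p$ definition of klt quantifies over all birational proper models, while your spread-out log resolution only tests the ones visible from characteristic zero; this happens to be the right direction of implication for what you want, but it is worth spelling out. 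None of this undercuts your main point: the proposal is a reasonable outline of how one might attack the problem, and it correctly diagnoses where the wall is.
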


Gongyo, Okawa, Sannai and Takagi prove Conjecture \ref{logFanoconj} under the additional hypothesis that the variety is a $\mathbb Q$-factorial  Mori Dream space, by applying the minimal model program \cite{GongyoOkawaSannaiTakagi}. This gives urgency to another interesting question:  are 
globally F-regular type varieties (of characteristic zero)  Mori Dream Spaces? Moreover, since  log Fano spaces (of characteristic zero) are Mori Dream spaces by \cite[Cor 1.3.2]{BCHM10}, the answer is necessarily {\it yes} if Conjecture \ref{logFanoconj} is true. 
What about in characteristic $p$? 
\begin{Question}
Assume that $X$ is globally F-regular. Is it true that the Picard group of $X$ is  finitely generated? Is it true that the Cox ring of $X$ is always finitely generated?
\end{Question}

Similarly, there are related questions and results about the geometry of Frobenius split projective varieties. For example,

\begin{Thm}\label{logCY}\cite{SchwedeSmithLogFanoVsGloballyFRegular}
If $X$ is a normal Frobenius split  projective variety of characteristic $p$, then $X$ is log Calabi-Yau. 
This means that $X$ admits an effective $\mathbb Q$-divisor such that $(X, \Delta)$ is log canonical
\footnote{Log canonical  is usually defined in characteristic 0, but it can be defined in any characteristic similarly to how we defined Kawamata log terminal singularities. We require instead that   each coefficient of $\pi^*(K_X+\Delta)-K_Y$ is at most 1.
}
 and 
$K_X + \Delta$ is $\mathbb Q$-linearly equivalent to the trivial divisor. 
\end{Thm}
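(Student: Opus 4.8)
The plan is to build the boundary divisor $\Delta$ directly out of a Frobenius splitting, using the duality of Lemma~\ref{Duality}, and then to recognize the resulting pair $(X,\Delta)$ as \emph{sharply F-pure}, a condition known to force log canonicity. Concretely, I would first fix a Frobenius splitting $\phi\colon F_*\mathcal{O}_X\to\mathcal{O}_X$ with $\phi(1)=1$, which exists by hypothesis (Definition~\ref{defFsplit}). By Lemma~\ref{Duality},
\[\mathcal{H}om_{\mathcal{O}_X}(F_*\mathcal{O}_X,\mathcal{O}_X)\cong F_*\omega_X^{1-p}=F_*\mathcal{O}_X((1-p)K_X),\]
so the datum of $\phi$ is the same as a nonzero global section of the rank-one reflexive sheaf $\mathcal{O}_X((1-p)K_X)$ on the normal variety $X$; such a section cuts out an effective Weil divisor $D$ with $D\sim(1-p)K_X$. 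Put $\Delta=\tfrac{1}{p-1}D$. Then $\Delta$ is an effective $\mathbb{Q}$-divisor and $(p-1)(K_X+\Delta)=(p-1)K_X+D\sim 0$, so $K_X+\Delta$ is $\mathbb{Q}$-linearly equivalent to the zero divisor — in particular $\mathbb{Q}$-Cartier, so that discrepancies for $(X,\Delta)$ make sense. This settles the ``$K_X+\Delta\sim_{\mathbb{Q}}0$'' half of the statement, and is essentially automatic.

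The substance is to show $(X,\Delta)$ is log canonical, and for this I would exploit that $\phi$ is a genuine \emph{splitting}, not merely a section. The $D$-twisted form of Lemma~\ref{Duality} already recorded in the proof of Corollary~\ref{bigantican} gives
\[\mathcal{H}om_{\mathcal{O}_X}(F_*\mathcal{O}_X(D),\mathcal{O}_X)\cong F_*\bigl(\omega_X^{1-p}(-D)\bigr)\cong F_*\mathcal{O}_X,\]
the last isomorphism because $(1-p)K_X-D\sim 0$. Chasing this isomorphism, the identity $\phi(1)=1$ becomes precisely the statement that the composite $\mathcal{O}_X\to F_*\mathcal{O}_X\hookrightarrow F_*\mathcal{O}_X(D)$ splits as a map of $\mathcal{O}_X$-modules; since $(p-1)\Delta=D$ is an integral divisor, this is exactly the assertion that $(X,\Delta)$ is sharply F-pure (with $e=1$; the conditions for larger $e$ follow by iterating $\phi$). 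The final step is to invoke the positive-characteristic principle that a sharply F-pure pair has log canonical singularities — every prime divisor over $X$ has discrepancy $\geq -1$ for $(X,\Delta)$ — which is the analogue for pairs of the ``F-pure implies log canonical'' direction underlying Theorem~\ref{mmp}(2). This fits the pattern of Theorems~\ref{FanoGlob}--\ref{FanoGlobchar0}: global F-regularity matches the klt/log Fano condition, while global Frobenius splitting matches the lc/log Calabi--Yau condition, with ``ample'' and ``klt'' replaced by ``$\mathbb{Q}$-trivial'' and ``lc.''

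I expect the main obstacle to be exactly that last implication. Producing $\Delta$ and checking $K_X+\Delta\sim_{\mathbb{Q}}0$ is formal, and the translation ``splitting $\iff$ sharp F-purity of $(X,\tfrac{1}{p-1}D)$'' is diagram-chasing with the duality above. But ``sharply F-pure $\Rightarrow$ log canonical'' requires a genuine discrepancy estimate: given a proper birational $\pi\colon Y\to X$ and a prime divisor $E\subset Y$ with discrepancy $<-1$, one must contradict the existence of the splitting, essentially by localizing at the generic point of the center of $E$, restricting $\phi$ (and its iterates $\phi^{e}$) there, and comparing the order of vanishing forced along $E$ with the growth dictated by $1+p+\cdots+p^{e-1}$. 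Carrying this out in the stated generality — a possibly singular normal variety, without invoking resolution of singularities — is the technical heart, and I would cite Hara--Watanabe together with the packaging in \cite{SchwedeSmithLogFanoVsGloballyFRegular} (via Schwede's F-adjunction) rather than reprove it here.
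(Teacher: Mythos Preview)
The paper does not actually prove Theorem~\ref{logCY} in the text; it is stated with a citation to \cite{SchwedeSmithLogFanoVsGloballyFRegular} and no argument is given. Your proposal is correct and is essentially the proof found in that reference: construct $\Delta=\tfrac{1}{p-1}D$ from the divisor of the splitting $\phi$ via Lemma~\ref{Duality}, observe $K_X+\Delta\sim_{\mathbb Q}0$, check that the splitting condition makes $(X,\Delta)$ sharply Frobenius split, and then invoke the implication ``sharply F-split $\Rightarrow$ log canonical,'' which the paper records later as Theorem~\ref{equivofsings}(2). You have correctly identified that final implication as the only nontrivial step and correctly attributed it to Hara--Watanabe; there is nothing to add.
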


Again,  the converse fails because of irregularities in small characteristic. The same cubic hypersurface defined by $x^3 + y^3 + z^3 + z^3$ in $\mathbb P^3$ is not Frobenius split in characteristic two, but it is  a smooth 
log Calabi Yau variety.  However, we do expect an analog of Theorem \ref{FanoGlobchar0} to hold. 
We conjecture:
\begin{Conj}\label{logCYchar0}  \cite{SchwedeSmithLogFanoVsGloballyFRegular}.
If $X$ is a log Calabi Yau variety of characteristic zero, then $X$ has Frobenius split type.
\end{Conj}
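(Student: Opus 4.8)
\medskip
\noindent\emph{Toward Conjecture \ref{logCYchar0}.}
The natural plan is to imitate the proof of Theorem \ref{FanoGlobchar0}: spread the data out over a finitely generated $\Z$-algebra, reduce modulo a dense set of primes, and use the cone construction of Theorem \ref{criterion F-split1} (equivalently Theorem \ref{criterion F-split}) to convert the \emph{global} assertion ``$X_\mu$ is Frobenius split'' into the \emph{local} assertion ``the homogeneous coordinate ring of $X_\mu$ is Frobenius split,'' where one would like to invoke the (still open) implication that log canonical singularities have Frobenius split type.

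Concretely, one fixes a presentation of $X$, of the boundary $\Q$-divisor $\Delta$, and of a $\Q$-linear equivalence $K_X+\Delta\sim_{\Q}0$ over a finitely generated $\Z$-subalgebra $A$ of the ground field, and a very ample $\mathcal L$; after shrinking $\Spec A$, for every closed point $\mu$ the reduction $(X_\mu,\Delta_\mu)$ is a normal projective pair over the finite field $A/\mu$ with $K_{X_\mu}+\Delta_\mu\sim_{\Q}0$ and at worst log canonical singularities, and $\mathcal L_\mu$ is very ample. By Theorem \ref{criterion F-split1}, $X_\mu$ is Frobenius split if and only if its homogeneous coordinate ring $S_\mu$ (with respect to $\mathcal L_\mu$) is Frobenius split, and by Lemma \ref{loc} this is a condition on the local ring of $S_\mu$ at its irrelevant maximal ideal. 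Provided $\mathcal L$ is chosen positive enough that the affine cone $\Spec S$ is normal, the vertex of $\Spec S$ carries a log canonical pair structure coming from $\Delta$: this follows from Koll\'ar's criteria for the singularities of cones, since $-(K_X+\Delta)\sim_{\Q}0$ is the borderline ($r=0$) case. Granting the conjectural converse of Theorem \ref{mmp}(2) in its $\Q$-Gorenstein/pair form --- that log canonical singularities have Frobenius split type --- one deduces that $S$, hence $S_\mu$ for a dense set of $\mu$, is Frobenius split, hence so is $X_\mu$; that is, $X$ has Frobenius split type.

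A more hands-on variant would work directly with pluri-anticanonical sections. Since $-K_X\sim_{\Q}\Delta$ with $\Delta\geq 0$, for each prime $p$ with $(p-1)\Delta$ integral one has $\omega_{X_\mu}^{1-p}\cong\mathcal O_{X_\mu}((p-1)\Delta_\mu)$, which carries the distinguished section $s_p$ cutting out $(p-1)\Delta_\mu$; by Lemma \ref{Duality} and Proposition \ref{splitcrit}, $X_\mu$ is Frobenius split as soon as, at some smooth point $x$, the local expansion of $s_p$ has a nonzero $(x_1\cdots x_n)^{p-1}$ term. Log canonicity of $(X,\Delta)$ keeps every coefficient of $\Delta$ at most $1$ (the components of coefficient exactly $1$ being the candidates supplying the factors $x_i^{p-1}$), so the remaining content is the combinatorial/arithmetic statement that, for infinitely many $p$, one can locate such a smooth point through which enough coefficient-$1$ components pass --- or, failing that, that the unit appearing in the isomorphism $\omega_X^{1-p}\cong\mathcal O_X((p-1)\Delta)$ supplies the missing monomial --- which is exactly the crux.

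I expect this crux to be the real obstacle, and it cannot be circumvented with current technology: it subsumes two famous open problems. One is the conjectural implication ``log canonical $\Rightarrow$ dense Frobenius split type,'' tied to the conjectured equality of the $F$-pure and log canonical thresholds mentioned after Theorem \ref{mmp}. The other is already visible in the case $\Delta=0$ with $X$ a smooth Calabi--Yau variety, where (by Proposition \ref{Fcrit} together with Lemma \ref{Duality}, just as in Example \ref{elliptic}) the conjecture asserts precisely that $X$ has a Zariski-dense set of \emph{ordinary} reductions --- known for abelian varieties, K3 surfaces, and a handful of Calabi--Yau threefolds, but open in general. What one \emph{can} do, following Gongyo, Okawa, Sannai and Takagi \cite{GongyoOkawaSannaiTakagi} on the log Fano side, is run the minimal model program to establish Conjecture \ref{logCYchar0} under additional hypotheses (for instance when $X$ is a $\Q$-factorial Mori dream space), and to verify it in low dimension and for toric and other explicit classes.
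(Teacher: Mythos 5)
The statement is a \emph{Conjecture}, and the paper offers no proof of it; the surrounding discussion only notes that it is expected to follow from the (also open) local statement that log canonical singularities of characteristic zero have dense Frobenius split type, and that Mustata--Srinivas have reduced that local question to a conjecture about ordinary reductions with roots in Example \ref{elliptic}. You correctly recognize that you cannot prove the statement, and your analysis of \emph{why} is accurate and closely matches the paper's own remarks: the cone reduction via Theorem \ref{criterion F-split1} (together with Koll\'ar-type criteria for log canonical cones over a log Calabi--Yau base) translates Conjecture \ref{logCYchar0} into the conjectural implication ``log canonical $\Rightarrow$ dense F-split type'' stated after Theorem \ref{equivofsings}, and the $\Delta=0$ special case is precisely the open problem of dense ordinarity for Calabi--Yau varieties, as the paper signals via the Mustata--Srinivas reference. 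Your alternative pluri-anticanonical route through Lemma \ref{Duality} and Proposition \ref{splitcrit} is also sound as a description of what a direct attack would have to accomplish, and correctly locates the arithmetic obstruction in producing the $(x_1\cdots x_n)^{p-1}$-coefficient. One small caveat worth flagging in your second paragraph: the section $s_p$ of $\omega_{X_\mu}^{1-p}$ cutting out $(p-1)\Delta_\mu$ need not be the one corresponding to a splitting even when the residue condition is formally satisfiable, since the $\Q$-linear equivalence $K_X+\Delta\sim_\Q 0$ only determines $s_p$ up to a global unit, and it is precisely the indeterminacy of that unit modulo $p$ (the Hasse-invariant-type datum) that the ordinarity conjecture is about; you gesture at this (``the unit appearing in the isomorphism \dots supplies the missing monomial'') but it deserves to be stated as the central difficulty rather than a fallback. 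With that clarification, your account is a faithful summary of what is known and what remains open, and it agrees with the paper, which records the statement as a conjecture precisely because these obstacles are unresolved.
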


\subsection{Pairs.}

We have discussed Frobenius splitting and F-regularity in an absolute setting: these were defined as properties of a scheme $X$. However, in the  decade since the last MSRI special year in commutative algebra, a theory of ``F-singularities of pairs" has  flourished, inspired by the rich theory of pairs developed in birational geometry \cite{KollarSantaCruz}.   The idea to
extend Frobenius splitting and F-regularity to  pairs was a major breakthrough, pioneered by Nobuo Hara and Kei-ichi Watanabe in \cite{HaraWatanabe}.   Although we do not have space to include a careful treatment of this generalization here, we briefly outline the definitions and main ideas. 

By {\it pair\/} in this context, we have in mind a normal irreducible scheme $X$ of finite type over a perfect field,  together with  a  $\mathbb Q$-divisor $\Delta$ on $X$. (Another variant considers pairs $(X, \mathfrak a^t)$ consisting of an ambient scheme $X$ together with a sheaf of ideals $\mathfrak a$ and a rational exponent $t$.{\footnote{There are even triples $(X, \Delta, \mathfrak a^t)$ incorporating aspects of both variants.}})  In the geometric setting, an additional assumption---namely that $K_X + \Delta$ is $\mathbb Q$-Cartier---is usually imposed, because one of the main techniques in birational geometry involves pulling back divisors to different birational models (and only $\mathbb Q$-Cartier divisors can be sensible pulled back). 
One possible advantage of the algebraic notion of pairs is that it is not necessary to assume that $K_X+\Delta$ is $\mathbb Q$-Cartier, although alternatives have also been proposed directly in the world of birational geometry as well; see \cite{deFernexHaconSingNormalVar}. 

\subsubsection{
What do pairs have to do with Frobenius splittings?}  Given an $\mathcal O_X$-linear map $\phi: F^e_*\mathcal O_X \rightarrow \mathcal O_X$, we have already seen how it gives rise to a global section of $F^e_*\omega_X^{1-p^e}, $ hence an effective divisor $\tilde D$ linearly equivalent to $(1-p^e)K_X$. If we set $\Delta = \frac{1}{p^e-1}\tilde D$, the pair $(X, \Delta)$ can be interpreted as more or less equivalent to the data of the map $\phi$.  If $D$ is an effective Cartier divisor through which our 
map $\phi$ factors, then (as in the proof of Corollary \ref{bigantican}), we can view $\phi$ as a global section of $\mathcal O_X((1-p^e)K_X + D)$, hence an effective divisor $\tilde D$ linearly equivalent to $(1-p^e)K_X + D$. Setting $\Delta = \frac{1}{p^e-1}\tilde D$, we have that $\Delta$ is a $\mathbb Q$-divisor satisfying $K_X + \Delta$ is $\mathbb Q$-Cartier, since it is $\mathbb Q$-linearly equivalent to the Cartier $\mathbb Q$-divisor 
$\frac{1}{p^e-1}D$.
We refer to the exceptionally clear exposition of this idea, with deep applications to understanding the behavior of test ideals under finite morphisms, in the paper \cite{SchwedeTuckerTestidealsFiniteMorphism}.

The definition of  (local or global) F-regularity and Frobenius splitting can be generalized to pairs as follows:

\begin{defi} Let $X$ be a normal F-finite variety, and $\Delta$ an effective  $\mathbb Q$-divisor on $X$.
\begin{enumerate} 
\item 
The pair $(X, \Delta)$ is sharply Frobenius split  (respectively locally sharply Frobenius split) if  there exists an $e \in \mathbb N$ such that the natural map 
$$
\mathcal O_X \rightarrow F_*^{p^e}\mathcal O_X(\lceil{(p^e-1) \Delta}\rceil)
$$ splits as an map of sheaves of $\mathcal O_X$-modules (respectively, splits locally at each stalk).
\item The pair $(X, \Delta)$ is globally (respectively, locally) F-regular if for all effective divisors $D$,  there exists an $e \in \mathbb N$ such that the natural map 
$$
\mathcal O_X \rightarrow F_*^{p^e}\mathcal O_X(\lceil{(p^e-1) \Delta}\rceil + D)
$$ splits as an map of sheaves of $\mathcal O_X$-modules (respectively, splits locally at each stalk).

\end{enumerate}
\end{defi}

\begin{Rmk} A slightly different definition of Frobenius splitting for a pair $(X, \Delta)$ was first given by Hara and Watanabe \cite{HaraWatanabe}. The variant here, which fits better into our context,   was introduced by Karl Schwede \cite{SchwedeRefinementsOfFRegularity}.  
\end{Rmk}

The local properties of F-regularity and F-purity for pairs  turn out to be closely related to the properties of log terminality and log canonicity that arose independently, in the minimal model program,  in the 1980's.  The absolute versions of the following theorems were already mentioned at the end of the first Lecture.

\begin{Thm}\cite{HaraWatanabe}\label{equivofsings}
Let $(X, \Delta)$ be a pair where $X$ is a normal variety of prime characteristic and $\Delta$ is a $\mathbb Q$-divisor such that $K_X + \Delta$ is $\mathbb Q$-Cartier. 
\begin{enumerate} \item If $(X, \Delta)$ is a locally F-regular pair, then $(X, \Delta)$ is Kawamata Log terminal.
\item If $(X, \Delta)$ is a locally  sharply Frobenius split  pair, then $(X, \Delta)$ is log canonical.
\end{enumerate}
\end{Thm}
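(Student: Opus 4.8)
\textbf{Proof proposal for Theorem \ref{equivofsings}.}

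The plan is to translate each splitting hypothesis into a statement about the order of vanishing of a pluri-anticanonical section, and then compare with the discrepancy inequality defining Kawamata log terminality (resp. log canonicity). First I would pass to the local setting: both conclusions and hypotheses can be checked after localizing at a point, and by Lemma \ref{loc} (and its pair analogue) F-regularity and sharp Frobenius splitting for pairs are local conditions; so I may assume $X = \Spec R$ with $R$ a normal local domain and $\Delta$ effective with $K_X + \Delta$ $\Q$-Cartier. Fix a log resolution $\pi\colon Y \to X$ and write $\pi^*(K_X + \Delta) - K_Y = \sum a_i E_i$; the goal is to show $a_i < 1$ for all $i$ in case (1), and $a_i \leq 1$ for all $i$ in case (2).

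Next I would use the duality package from Lemma \ref{Duality} and the remark following Corollary \ref{bigantican}: an $\mathcal O_X$-linear map $\phi\colon F^e_*\mathcal O_X(\lceil (p^e-1)\Delta\rceil) \to \mathcal O_X$ is the same data as a global section of $F^e_*\omega_X^{1-p^e}(-\lceil(p^e-1)\Delta\rceil)$, hence (since $K_X + \Delta$ is $\Q$-Cartier, so $(1-p^e)(K_X+\Delta)$ is Cartier for $e$ divisible enough) an effective divisor $D_e \sim (1-p^e)K_X - \lceil(p^e-1)\Delta\rceil$, equivalently $D_e + \lceil (p^e-1)\Delta\rceil \sim (1-p^e)K_X$. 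The splitting condition $\phi(1) = 1$ — equivalently, by Proposition \ref{splitcrit} in the geometric picture — forces $D_e$ to have controlled vanishing: precisely, the section must be a unit in the appropriate localization, which bounds the multiplicity/coefficients of $\frac{1}{p^e-1}D_e$ along every divisor over $X$ by $1$ (this is the standard "splitting implies no worse than $(p^e-1)\times$ SNC" statement). For the sharply F-split case this is exactly one such $e$; for the F-regular case we additionally get, for every effective $D$, a splitting factoring through $F^e_*\mathcal O_X(\lceil(p^e-1)\Delta\rceil + D)$, which yields the \emph{strict} inequality after choosing $D$ to be a small ample multiple and letting $e \to \infty$.

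The technical core — and the step I expect to be the main obstacle — is making rigorous the passage from "the splitting evaluates $1$ to $1$" to the numerical discrepancy bound on the resolution $Y$. Concretely: pull the section $\phi$ (viewed as a section of $\omega_X^{1-p^e}(-\lceil(p^e-1)\Delta\rceil)$) back to $Y$, where one computes its divisor as $(1-p^e)K_Y - \pi^*\lceil(p^e-1)\Delta\rceil + (\text{ramification})$, and track how the $\lceil\cdot\rceil$ rounding interacts with the ramification divisor $K_Y - \pi^*K_X$; the surjectivity (for F-regular) or nonvanishing (for F-split) of the splitting at the generic point of each $E_i$ translates, after dividing by $p^e - 1$ and taking $e \to \infty$ to kill the rounding error $O(1/(p^e-1))$, into $a_i < 1$ respectively $a_i \le 1$. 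One must be careful that the $e$'s need not be uniform across the $E_i$ in the F-regular case (each effective $D$ supported near $E_i$ gives its own $e$), but since the $E_i$ are finite in number this causes no real trouble. I would cite \cite{HaraWatanabe} for the original execution of this comparison and \cite{SchwedeTuckerTestidealsFiniteMorphism} for the cleanest modern account of the dictionary between maps $\phi$ and $\Q$-divisors, limiting the self-contained argument to the reduction to the local case and the statement of the duality isomorphism, both already available in the excerpt.
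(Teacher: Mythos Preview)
The paper does not actually prove this theorem: it is stated with the citation \cite{HaraWatanabe} and no argument is given, so there is no ``paper's own proof'' to compare against. That said, your outline is essentially the Hara--Watanabe strategy (translate the splitting into a section of $\omega_X^{1-p^e}(-\lceil(p^e-1)\Delta\rceil)$ via Lemma~\ref{Duality}, then compare orders of vanishing on a birational model), so the overall shape is right.

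There is, however, a genuine gap in your execution. You begin by \emph{fixing a log resolution} $\pi\colon Y\to X$ and then argue with the finitely many exceptional divisors $E_i$. In prime characteristic resolution of singularities is not known, and more to the point the paper's definition of Kawamata log terminal (see the footnote to Definition~\ref{logFanodef}) requires the discrepancy inequality for \emph{every} proper birational $\pi\colon Y\to X$, not for a single chosen resolution. So the sentence ``since the $E_i$ are finite in number this causes no real trouble'' is precisely where the argument breaks: there is no finite list of $E_i$ to work with. The correct set-up is to fix an \emph{arbitrary} proper birational map $\pi\colon Y\to X$ with $Y$ normal and an arbitrary prime divisor $E$ on $Y$, and then run the duality/order-of-vanishing comparison for that single $E$; the splitting hypothesis gives you the required $e$ (and, in case (1), the auxiliary $D$) depending on $E$, and the limit $e\to\infty$ kills the rounding error as you say. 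A smaller quibble: your appeal to Proposition~\ref{splitcrit} is not quite apt, since that criterion is formulated at a smooth point of $X$ and you are working locally at a possibly singular point; the rigorous translation of ``$\phi(1)=1$'' into a vanishing-order bound on $Y$ goes through Grothendieck duality for the finite map $F^e$ composed with $\pi$, which is what \cite{HaraWatanabe} actually does.
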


Similarly, there are global versions:     Theorem \ref{FanoGlob} and \ref{logCY} also hold for ``pairs." 
See  \cite{SchwedeSmithLogFanoVsGloballyFRegular}.

 In characteristic zero,  the discussion of Section \ref{char0stuff} generalizes easily to  pairs.  
 Given a pair $(X, \Delta)$, where now $X$ is normal and essentially finite type over a field of characteristic {\it zero}, we can define the {\it pair\/} to be (locally or globally) {\bf F-regular type } or 
(locally or globally) {\bf sharply Frobenius split type}   as in Definition \ref{cha0}. 
That is, we chose a  ground ring $A$ over which both $X$ and $\Delta$ are defined, which gives rise to a pair $(X_A, \Delta_A)$ over $X_A$, and define  the pair $(X, \Delta)$ to be of 
(locally or globally) F-regular-type if for a Zariski dense set of closed points in Spec $A$, the 
pair $(X_A \times Spec A/\mu, \Delta_A {\text{ mod } }\mu ) $ is (locally or globally) F-regular, where $  \Delta_A {\text{ mod } } \mu$ denotes the pullback of $\Delta_A$ to the closed fiber $X_A \times Spec A/\mu$. Similarly, we define sharp Frobenius splitting type.
For details of this reduction to prime characteristic, see for example, \cite{HaraWatanabe}.

With the definitions in place, Theorem \ref{equivofsings}  implies  characteristic zero versions. Let 
 $(X, \Delta)$  be a pair where $X$ is a normal variety of prime characteristic zero and $\Delta$ is a $\mathbb Q$-divisor satisfying   $K_X + \Delta$ is $\mathbb Q$-Cartier. 
Then if  $(X, \Delta)$ is of locally F-regular type, then $(X, \Delta)$ has klt singularities \cite{HaraWatanabe}.  See the cited papers for details. 

As in the absolute case, there are  converses (some conjectured) to all these results for pairs in characteristic zero. Hara and Watanabe show that klt pair $(X, \Delta)$ of characteristic zero has F-regular type \cite{HaraWatanabe}; this leans heavily on some injectivity results for Frobenius acting on cohomology groups proved in  \cite{HaraCharacterizationRatSing}; see also \cite{MehtaSrinivasRatImpliesFRat}.  In  \cite{SchwedeSmithLogFanoVsGloballyFRegular}, these results used to prove a global version: a log Fano pair $(X, \Delta)$ of characteristic zero is of globally F-regular type.  The log canonical property has proved more elusive: it is conjectured that if a  pair $(X,\Delta)$ of characteristic zero is log canonical, then it is of locally sharply Frobenius split type, but this questions has  remained open since its inception in the nineties. See \cite{HaraWatanabe}.
If this local conjecture holds, then the global analog follows: a Calabi-Yau pair $(X, \Delta)$ of characteristic zero is of globally sharply Frobenius split type; see  \cite{SchwedeSmithLogFanoVsGloballyFRegular}.

We can think of  F-regularity as a ``characteristic $p$ analog" of klt singularities, and (at least conjecturally) F-splitting as a ``characteristic $p$ analog"  of log canonical singularities. The analogy runs deep: F-pure thresholds become  ``characteristic $p$ analogs" of log canonical thresholds, test ideals become  ``characteristic $p$ analogs" of multiplier ideals, centers of sharp F-purity become  ``characteristic $p$ analogs"  of  log canonicity, F-injectivity becomes a ``characteristic $p$ analog" of  Du Bois singularities.

\section{The Test Ideal.}

 The {\it test ideal\/} is a distinguished ideal  reflecting  the Frobenius properties of  a prime characteristic ring. 
For example,  the test ideal
defines the closed locus of  Spec $R$ consisting of points $\mathfrak p$ at which $R_{\mathfrak p}$ is not F-regular.  Test ideals are ``characteristic $p$ analogs" of multiplier ideals in birational algebraic geometry (\cite{SmithMulplierIdealUniversalTestIdeal} and \cite{HaraGeometricInterpTestideals});  they define a distinguished ``compatibly split subscheme"  of a Frobenius split variety (\cite{VassilevTestIdeals} and \cite{SchwedeCentersOfFPurity}).  

Test ideals can be defined very generally for pairs on more or less arbitrary Noetherian schemes of characteristic $p$.  However, the  theory becomes most  transparent in two special cases, which are loosely the ``classical commutative algebra case" and  the ``classical algebraic geometry case."
In the classical commutative algebra case,  the scheme is  the spectrum of a local ring $R$ and 
we are interested in the ``absolute" test ideal.  In this case, the test ideal $\tau(R)$ is essentially Hochster and Huneke's test ideal for tight closure.{\footnote{Our terminology differs slightly from the tight closure literature, where our test ideal would be called the  ``big test ideal" for ``non-finitistic tight closure;" also, it defines the non-strongly F-regular locus.}} 
In the classical algebraic geometry case, we are interested in the test ideal of a pair $(X, \Delta)$, where $X$ is a smooth ambient scheme  and $\Delta$ is an effective $\mathbb Q$-divisor on $X$ 
(or $\Delta =  \mathfrak a^t$ where $\mathfrak a$ is an ideal sheaf on $X$ and $t$ is a positive rational number).  In this case, the  test ideal $\tau(X, \Delta)$ turns out to be a  ``characteristic $p$ analog" of the multiplier ideal in complex algebraic geometry, a subject  described, for example,  in Rob Lazarsfeld's MSRI introductory talks ten years ago \cite{BlickleLazarsfeldMultiplier}.

In this lecture, we explain the test ideal in  the ``classical commutative algebra" setting. We develop the test ideal  as   just one ideal   in a  lattice of  ideals   distinguished  with respect to the Frobenius map.  Our definition is not the traditional one  due to Hochster and Huneke, but 
a newer twist (due essentially to Schwede \cite{SchwedeCentersOfFPurity}) which is both illuminating and elegant,  tying the ideas into Mehta and Ramanathan's theory of compatibly split ideals.  The fourth lecture will treat test ideals in the ``classical algebro-geometric" setting.

\subsection{Compatible Ideals.} 
Let $R$ be an F-finite reduced  ring of characteristic $p$.
\begin{defi}\label{compatible}  Fix any 
 $R$-linear map $\varphi:R^{1/p^e}\to R$. 
An ideal $J$ of $R$ is called {\it $\varphi$-compatible} if $\varphi(J^{1/p^e})\subseteq J$.
\end{defi}

Put differently,  given an $R$-linear map $\varphi:  R^{1/p^e} \rightarrow R,$  consider the  obvious diagram
$$
\xymatrix{
R^{1/p^e}   \ar@{->}[r]^{{\varphi}}    \ar@{>>}[d]& R   \ar@{>>}[d]\\
 (R/J)^{1/p^e}  \ar@{.>}[r]   &   R/J, 
}
$$
where the vertical arrows are the natural surjections. 
The bottom arrow can not be filled in to make a commutative diagram in general: it  can be filled in  if and only if $J$ is $\varphi$-compatible.  That is, an ideal  $J$ is $\varphi$-compatible if and only if the map $\varphi: R^{1/p^e} \rightarrow R$ descends to a map $(R/J)^{1/p^e}\rightarrow  R/J$.   

\begin{ex}
\label{exercise: compatible ideals two variables}
Let $R=\mathbb{F}_p[x,y]$ and let $\phi:R^{1/p}\to R$ be the $R$-linear splitting defined by
\[\phi(x^{\frac{i}{p}}y^{\frac{j}{p}})=\begin{cases}x^{\frac{i}{p}}y^{\frac{j}{p}} & \frac{i}{p},\frac{j}{p}\in\mathbb{N}\\0 & {\rm otherwise.}\end{cases}\]
As an exercise, the reader should check that  the ideals $(0), (x), (y),  (xy), $ 
and $(x, y)$
are all $\phi$-compatible---in fact, they are the only $\phi$-compatible ideals in $R$. \end{ex}

The following properties are straightforward: 

\begin{Prop} \label{compprop} Fix an $R$-linear map $\varphi: R^{1/p^e} \rightarrow R$. 
\begin{enumerate}
\item
The set of $\varphi$-compatible  ideals is closed under sum and intersection. 
\item The minimal primes of a $\varphi$-compatible ideal are $\varphi$-compatible.
\item If $\varphi$ is a Frobenius splitting, then all $\varphi$-compatible ideals are radical. 
\end{enumerate}
\end{Prop}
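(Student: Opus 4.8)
The plan is to treat the three parts separately: (1) and (3) reduce to short formal manipulations with $p^e$-th roots, while (2) needs a localization trick, which is where I expect the only real difficulty. For (1), I would first record the two facts that make roots interact with the lattice operations: since $R$ is reduced so is $R^{1/p^e}$, hence $p^e$-th roots are unique, and $(a+b)^{1/p^e}=a^{1/p^e}+b^{1/p^e}$ by the Frobenius identity. From these one gets, inside $R^{1/p^e}$,
\[
(J_1+J_2)^{1/p^e}=J_1^{1/p^e}+J_2^{1/p^e},\qquad (J_1\cap J_2)^{1/p^e}=J_1^{1/p^e}\cap J_2^{1/p^e}
\]
(the first from additivity of roots, the second from their uniqueness). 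Then I would apply $\varphi$ and use $\varphi(A+B)=\varphi(A)+\varphi(B)$ together with $\varphi(A\cap B)\subseteq\varphi(A)\cap\varphi(B)$ to conclude $\varphi((J_1+J_2)^{1/p^e})\subseteq J_1+J_2$ and $\varphi((J_1\cap J_2)^{1/p^e})\subseteq J_1\cap J_2$, i.e. $\varphi$-compatibility of the sum and of the intersection.

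For (3) I would argue by contradiction. A Frobenius splitting satisfies $\varphi(r)=r$ for all $r\in R\subseteq R^{1/p^e}$. If a $\varphi$-compatible $J$ were not radical, pick $x\in\sqrt J\setminus J$ and let $n\ge 2$ be minimal with $x^n\in J$; set $y=x^{\lceil n/p^e\rceil}$. Then $y^{p^e}=x^{p^e\lceil n/p^e\rceil}\in J$ since the exponent is $\ge n$, so $y=(y^{p^e})^{1/p^e}\in J^{1/p^e}$, whence $y=\varphi(y)\in\varphi(J^{1/p^e})\subseteq J$. But $1\le\lceil n/p^e\rceil<n$, contradicting the minimality of $n$ (or the choice of $x$, in the case $\lceil n/p^e\rceil=1$). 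I would probably also mention the conceptual version: $\varphi$-compatibility of $J$ says precisely that $\varphi$ descends to a map $F^e_*(R/J)\to R/J$, and for a splitting this descent splits the Frobenius of $R/J$, so $R/J$ is reduced.

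For (2) the key idea will be to localize. Let $J$ be $\varphi$-compatible and $\mathfrak{p}$ a minimal prime of $J$; I must show $\varphi(x^{1/p^e})\in\mathfrak{p}$ for every $x\in\mathfrak{p}$. Because $R$ is F-finite and Noetherian, $R^{1/p^e}$ is a finite $R$-module, so the formation of $R^{1/p^e}$, the map $\varphi$, and the containment $\varphi(J^{1/p^e})\subseteq J$ all commute with localizing at $\mathfrak{p}$; replacing $R$ by $R_{\mathfrak{p}}$ I may assume $(R,\mathfrak{p})$ is local with $\sqrt J=\mathfrak{p}$. The point of this reduction is that now $x\in\mathfrak{p}=\sqrt J$ itself has a power in $J$; let $L$ be minimal with $x^L\in J$. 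If $L=1$ then $x\in J$ and $\varphi(x^{1/p^e})\in\varphi(J^{1/p^e})\subseteq J\subseteq\mathfrak{p}$. If $L\ge 2$, suppose $c:=\varphi(x^{1/p^e})\notin\mathfrak{p}$, so $c$ is a unit, and choose $s\ge 1$ minimal with $p^{se}\ge L$, so that $p^{(s-1)e}<L$. Then $R$-linearity of $\varphi$ gives
\[
\varphi\bigl((x^{p^{se}+1})^{1/p^e}\bigr)=\varphi\bigl(x^{p^{(s-1)e}}\cdot x^{1/p^e}\bigr)=x^{p^{(s-1)e}}c,
\]
while the left-hand side lies in $\varphi(J^{1/p^e})\subseteq J$ because $x^{p^{se}+1}\in J$; since $c$ is a unit this forces $x^{p^{(s-1)e}}\in J$ with $p^{(s-1)e}<L$, contradicting the minimality of $L$. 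Hence $c\in\mathfrak{p}$.

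The hard part is spotting the reduction in (2): passing to $R_{\mathfrak{p}}$ kills the other minimal primes so that $\sqrt J=\mathfrak{p}$, after which the exponent bookkeeping with $p^{se}$ plus the $R$-linearity of $\varphi$ does everything. As a by-product, combining (1) and (2) shows that $\sqrt J=\bigcap_{\mathfrak{p}}\mathfrak{p}$ (intersection over the minimal primes of $J$) is again $\varphi$-compatible, which is the form in which this is most often used.
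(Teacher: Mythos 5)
Your proofs of (1) and (3) are correct; indeed your ``conceptual'' remark at the end of (3)---that a compatible splitting descends to a Frobenius splitting of $R/J$, which is therefore reduced---is precisely the paper's argument. The genuine divergence is in (2), and it is worth comparing. The paper chooses $w\in (J:P)\setminus P$, described as lying in ``the intersection of all other primary components'' of $J$, and then deduces $w\,\varphi(z^{1/p^e})=\varphi(wz^{1/p^e})\in J\subseteq P$, so $\varphi(z^{1/p^e})\in P$ since $w\notin P$. That is very slick when $J$ is radical, for then $w\in\bigcap_{i\geq 2}P_i\setminus P$ genuinely lies in $(J:P)$; but for an arbitrary $\varphi$-compatible $J$ one can have $(J:P)\subseteq P$---already $J=(x^2)$, $P=(x)$ in $k[x]$ gives $(J:P)=(x)$, so no such $w$ exists and the argument as written does not go through. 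Your localize-then-induct route avoids this entirely: passing to $R_{\fp}$ is legitimate because $R^{1/p^e}$ is a finite $R$-module, so $(R^{1/p^e})_{\fp}\cong (R_{\fp})^{1/p^e}$ and $\varphi$ localizes; then $\sqrt J=\fp R_\fp$, the minimal $L$ with $x^L\in J$ gives a quantity to descend, and the $R$-linearity computation $\varphi\bigl((x^{p^{se}+1})^{1/p^e}\bigr)=x^{p^{(s-1)e}}\,\varphi(x^{1/p^e})$ with $p^{(s-1)e}<L\le p^{se}$ forces $\varphi(x^{1/p^e})\in\fp R_\fp\cap R=\fp$. This costs more bookkeeping than the paper's one-liner, but it handles the proposition in its stated generality (arbitrary, not necessarily radical, $\varphi$-compatible $J$). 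Your (1) is the expected easy argument the paper leaves to the reader, and is fine.
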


\begin{proof}
We leave (1) as an easy exercise. For statement (2), let $P$ be a minimal prime of a $\varphi$-compatible ideal $J$. Take any  $w$ not in $P$ but  in the intersection of all other primary components of $J$, that is,  take $w  \in (J:P) \setminus P$. 
For any $z \in P$, we need to show that $\varphi(z^{1/p^e}) \in P$. 
Now since
$
w^{p^e}z \in J 
$ and $J$ is $\phi$-compatible, we have
$$
 w \varphi(z^{1/p^e}) = \varphi(wz^{1/p^e})  \in \varphi(J^{1/p^e}) \subset J \subset P.$$
Since $w \notin P$, we conclude that $\varphi(z^{1/p^e}) \in P$.
So $P$ is $\varphi$-compatible. 

Statement (3) is also easy: if $J$ is $\phi$ compatible, 
the  commutative diagram
$$
\xymatrix{
R^{1/p^e}   \ar@{->}[r]^{{\varphi}}    \ar@{>>}[d]& R   \ar@{>>}[d]\\
 (R/J)^{1/p^e}  \ar[r]  &   R/J, 
}
$$
shows that if $\varphi$ is a Frobenius splitting, so is the induced map on $R/J$. Since Frobenius split rings are reduced, the  ideal $J$ is radical. 
\end{proof}

\subsubsection{Compatibly Split Subschemes.}
When $\varphi$ is a Frobenius splitting,   a $\varphi$-compatible ideal is often called a $\varphi$-compatibly split ideal, or (suppressing the dependence on $\varphi$) a compatibly split ideal. The subscheme it defines is called {\it  a compatibly split subscheme.} 
  The notion of {\it compatible Frobenius splitting\/} was first introduced in \cite{Mehta-Ramanathan}. In that language, a compatibly split ideal (sheaf) on a Frobenius split variety defines a compatibly split subscheme---a subscheme to which the given splitting of Frobenius restricts.

 For a fixed splitting $\varphi$ of Frobenius, the set of $\varphi$-compatibly split ideals is always a  {\it finite\/} set of radical ideals. The finiteness is not obvious;  see \cite{SchwedeFAdjunction} or \cite{KumarMehtaFinitenessofthenumberofcompatiblysplitsubvarieties}, or \cite{SchwedeTuckerNumberOfCompatibleSubvarieties}. Also \cite{EnescuHochsterTheFrobeniusStructureOfLocalCohomology} and \cite{SharpGradedAnnihilators} contains a related dual fact.

\subsection{Uniformly Compatible Ideals.}
 Of course, if $R$ is Frobenius split, there may be many different splittings of Frobenius.  Different splittings   produce different compatibly split ideals. For example,  by linearly changing coordinates in $\mathbb F_p[x, y]$, we can construct a different splitting of Frobenius, call it $\varphi_{ab}$,  much like the one in Example \ref{exercise: compatible ideals two variables} but centered instead on the point $(x-a, y-b)$. Its compatibly  split ideals will be the ideals  $(x-a, y-b)$, $(x-a)(x-b)$,  $(x-a), (y-b), $ and the zero ideal. These ideals are compatibly split with respect to $\varphi_{ab}$ but not with the $\varphi$ from Example 
  \ref{exercise: compatible ideals two variables}.

The ideals which are compatible with respect to  {\it every\/}  $R$-linear map $R^{1/p^e} \rightarrow R$ play an essential role in our story:
\begin{defi}An ideal $J$ in an F-finite ring is {\bf uniformly $F$ compatible} if it is compatible with respect to {\it every\/}  $R$-linear map $R^{1/p^e} \rightarrow R$, for all $e$.  
\end{defi}

  The test ideal is  a  distinguished uniformly $F$ compatible ideal:

\begin{defi}
\label{defn: test ideal classical}
The test ideal{\footnote{Again a reminder for experts in tight closure: this is equal to the  ``big" test ideal in the tight closure terminology.  If $R$ is complete local, for example, the test ideal we define here is the same as the annihilator of the non-finitistic tight closure of zero in the injective hull of the residue field of $R$ \cite{LyubeznikSmith2}. Of course, all versions of test ideals in the tight closure theory are conjectured to be equal, and are known to be equal in many cases, including for Gorenstein $R$ and graded $R$ \cite{LyubeznikSmithStrongWeakFregularityEquivalentforGraded}.}} of an F-finite Noetherian domain $R$
 is the {\it smallest \/} non-zero uniformly $F$ compatible  ideal. 
That is, the test ideal is the smallest non-zero ideal $J$ 
  that satisfies
\[\varphi(J^{1/p^e})\subseteq J\]
for all $\varphi\in \Hom_R(R^{1/p^e},R)$ and all $e\geq 1$.
More generally, if $R$ is not a domain, we define the test ideal as the smallest uniformly $F$ compatible ideal not contained in any minimal prime. 
\end{defi}

{\it It is important to note  in Definition \ref{defn: test ideal classical} that it is {\bf not at all obvious that there  exists\/} a smallest such ideal:}
why couldn't the set of all compatible ideals include an infinite descending chain of non-zero ideals?
This is a deep result, essentially relying on an important lemma of Hochster and Huneke crucial to their proof of the  existence of ``completely stable test elements" (Cf. the proof of  Theorem \ref{testelement}).  
For a  summary of the proof, see the survey \cite{SchwedeTuckerSurvey}.

\begin{Rmk}
The set of uniformly $F$ compatibly ideals forms a lattice closed under sum and intersection, according to Proposition \ref{compprop}. 
 This  lattice has been studied before: in the local Gorenstein case, it is the precisely the lattice of F-ideals discussed in   \cite{MR1248078}; more generally,  it is the  lattice of annihilators of $\mathcal F(E)$-modules in \cite{LyubeznikSmith2}. However, those sources define the ideals as annihilators of certain  Artinian $R$-modules with Frobenius action.  Schwede's insight was that these ideals could be defined directly (dually to the original emphasis), thereby  producing   a more straightforward and global  theory which  neatly ties in with   Mehta and Ramanathan's  ideas  on compatible Frobenius splitting.
\end{Rmk}

\begin{Rmk}\label{Fideals} Experts in tight closure can  see easily how the  definition of the test ideal here relates to the one in the literature, and why there is a unique smallest uniformly $F$ compatible ideal, at least in the Gorenstein local case.
Let $(R, m)$ be a Gorenstein local domain of dimension $d$. As is well-known, the test ideal is the  annihilator of the tight closure of zero in $H^d_m(R)$. In \cite{SmithRationalSingularities},  the Frobenius stable submodules of  $H^d_m(R)$ (including the tight closure of zero) are analyzed and their annihilators in $R$ are dubbed ``F-ideals;" there it is shown (also using test elements!)  that there is a unique largest proper Frobenius stable submodule of $H^d_m(R)$, hence a unique smallest  non-zero F-ideal, namely test ideal of $R$.  The uniformly $F$ compatible ideals are precisely the F-ideals---that is, annihilators of submodules of the top local cohomology module $H^d_m(R)$ stable under Frobenius. 
This is not hard to check using Lemma \ref{generator}; see \cite{SchwedeCentersOfFPurity} or  \cite[Thm 4.1]{EnescuHochsterTheFrobeniusStructureOfLocalCohomology}.
 The non-Gorenstein case is treated  in \cite{LyubeznikSmith2}; the uniformly $F$ compatible ideals are the annihilators of the $\mathcal F(E)$-modules there.  Schwede includes a fairly comprehensive discussion of the connections between his uniformly  $F$ compatible ideals and existing ideas in the literature; see 
   \cite{SchwedeCentersOfFPurity}.  
   \end{Rmk}


\begin{ex} The test ideal of $\mathbb F_p[x, y]$ is the whole ring. Indeed, we have seen that  every non-zero $c \in \mathbb F_p[x, y]$ can be taken to $1$ by some $R^{p^e}$-linear map.  So no non-zero proper ideal is uniformly F-compatible.
\end{ex}

\begin{Thm}\label{localization} Let $R$ be a reduced F-finite of characteristic $p>0$. 
\enumerate
\item The test ideal behaves well under localization and completion: for any multiplicative set $U$, the ideals $\tau(RU^{-1})$  and $\tau(R)U^{-1}$ coincide  in $RU^{-1}$, and for any prime ideal $\mathfrak p$, $\tau(\hat{R_{\mathfrak p}})\, = \,\tau(R)\hat{R_{\mathfrak p}}. $ 
\item  $R$ is  F-regular if and only if its test ideal is trivial.
\item
The test ideal defines the closed locus of prime ideals $\mathfrak p$ in Spec $R$ such that $R_{\mathfrak p}$ fails to be F-regular.
\end{Thm}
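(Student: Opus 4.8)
The plan is to derive all three statements from one structural input about $\tau(R)$ together with the fact that $\Hom_R(R^{1/p^e},R)$ commutes with localization and completion (because $R$ is F-finite, so $R^{1/p^e}$ is a finitely presented $R$-module, whence $\Hom$ commutes with any flat base change). The structural input, which I would isolate as a lemma essentially equivalent to the Hochster--Huneke theory of test elements already used in the proof of Theorem~\ref{testelement} (cf.\ also Lemma~\ref{generator}), is: if $c\neq 0$ and $R_c$ is regular, then a power of $c$ lies in $\tau(R)$, and for any $c'\in\tau(R)$ with $R_{c'}$ regular one has
\[
\tau(R)=\sum_{e\ge 0}\ \sum_{\varphi\in\Hom_R(R^{1/p^e},R)}\varphi\big((c')^{1/p^e}R^{1/p^e}\big);
\]
equivalently, $\tau(R)$ is the smallest uniformly $F$-compatible ideal meeting the regular locus, and $1$ lies in the ideal $J_f:=\sum_{e,\varphi}\varphi(f^{1/p^e}R^{1/p^e})$ if and only if some single $\varphi$ and $e$ satisfy $\varphi(f^{1/p^e})=1$. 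I would prove (or cite) this first, and also record the routine facts that a localization map carries uniformly $F$-compatible ideals to uniformly $F$-compatible ideals both by extension and by contraction.

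Next I would prove (2). If $R$ is $F$-regular and $J\neq 0$ is uniformly $F$-compatible, pick $0\neq f\in J$; $F$-regularity gives $e,\varphi$ with $\varphi(f^{1/p^e})=1$, and compatibility forces $1\in J$, so $J=R$; thus $R$ is the only nonzero uniformly $F$-compatible ideal and $\tau(R)=R$. Conversely, if $\tau(R)=R$, fix $0\neq f$ and form $J_f$. It is uniformly $F$-compatible: applying $\psi\in\Hom_R(R^{1/p^{e'}},R)$ to a $p^{e'}$-th root of an element $\sum_i\varphi_i(g_i)$ with $g_i\in f^{1/p^{e_i}}R^{1/p^{e_i}}$, one uses that $p^{e'}$-th roots are additive in the reduced ring $R$ and that $\psi\circ\varphi_i^{1/p^{e'}}\in\Hom_R(R^{1/p^{e_i+e'}},R)$ sends $g_i^{1/p^{e'}}\in f^{1/p^{e_i+e'}}R^{1/p^{e_i+e'}}$ back into $J_f$. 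Since $f\in J_f$, minimality of $\tau(R)$ gives $\tau(R)\subseteq J_f$, so $J_f=R$, hence $1\in J_f$; the single-map clause of the lemma then produces one $e,\varphi$ with $\varphi(f^{1/p^e})=1$. As $f$ was arbitrary, $R$ is $F$-regular.

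For (1), localizing the inclusion $\tau(R)\subseteq R$ shows $\tau(R)U^{-1}$ is a nonzero uniformly $F$-compatible ideal of $RU^{-1}$, so $\tau(RU^{-1})\subseteq\tau(R)U^{-1}$; conversely the contraction $\tau(RU^{-1})\cap R$ is a nonzero uniformly $F$-compatible ideal of $R$, hence contains $\tau(R)$, and extending gives the reverse inclusion. For completion, fix a test element $c\in\tau(R_{\mathfrak p})$ with $(R_{\mathfrak p})_c$ regular; since $R$ is F-finite it is excellent, so $R_{\mathfrak p}\to\widehat{R_{\mathfrak p}}$ is a regular morphism and $c$ remains a test element in $\widehat{R_{\mathfrak p}}$. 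Applying the generator formula in $\widehat{R_{\mathfrak p}}$ and using $\Hom_{R_{\mathfrak p}}(R_{\mathfrak p}^{1/p^e},R_{\mathfrak p})\otimes_{R_{\mathfrak p}}\widehat{R_{\mathfrak p}}=\Hom_{\widehat{R_{\mathfrak p}}}(\widehat{R_{\mathfrak p}}^{1/p^e},\widehat{R_{\mathfrak p}})$ yields $\tau(\widehat{R_{\mathfrak p}})=\tau(R_{\mathfrak p})\widehat{R_{\mathfrak p}}$, and the already-proven localization case identifies $\tau(R_{\mathfrak p})$ with $\tau(R)R_{\mathfrak p}$.

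Statement (3) is then formal: by Lemma~\ref{Fregprop} one may test $F$-regularity of $R$ at $\mathfrak p$ on $R_{\mathfrak p}$; by (2) this holds iff $\tau(R_{\mathfrak p})=R_{\mathfrak p}$, and by (1) $\tau(R_{\mathfrak p})=\tau(R)R_{\mathfrak p}$, so $R_{\mathfrak p}$ fails to be $F$-regular exactly when $\tau(R)\subseteq\mathfrak p$, i.e.\ on the closed set $V(\tau(R))$. Throughout, the non-domain case is handled by replacing ``nonzero'' with ``not contained in any minimal prime.'' The one genuinely hard step is the structural lemma---the existence of test elements generating $\tau(R)$ and, equivalently, the passage from $1\in\sum_k\varphi_k(f^{1/p^{e_k}})$ to a single map $\varphi(f^{1/p^e})=1$ (i.e.\ that the ideals $\sum_{\varphi\in\Hom_R(R^{1/p^e},R)}\varphi(f^{1/p^e}R^{1/p^e})$ ascend with $e$). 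This is exactly where F-finiteness and excellence are really used; everything else is bookkeeping about flat base change and uniform $F$-compatibility.
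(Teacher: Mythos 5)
Your overall strategy mirrors the paper's: isolate the Hochster--Huneke test-element input (the paper's Lemma~\ref{genlem} together with the cited existence of completely stable test elements), exploit that $\Hom_R(R^{1/p^e},R)$ commutes with flat base change because $R^{1/p^e}$ is finitely generated, and then derive (1), (2), (3) by bookkeeping. Your treatment of (1) via extension and contraction of uniformly $F$ compatible ideals is a clean variant of the paper's argument, which instead applies the generator formula $\tau=\sum_{e,\varphi}\varphi(c^{1/p^e}R^{1/p^e})$ directly on both sides; both are valid (in the domain case---the non-domain case requires the usual caution about minimal primes, which the paper also handles only by a parenthetical).

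The one genuine gap is in your converse of (2), where you invoke a ``single-map clause'' globally: that $1\in J_f=\sum_{e,\varphi}\varphi(f^{1/p^e}R^{1/p^e})$ forces a single $e$ and $\varphi$ with $\varphi(f^{1/p^e})=1$. As you yourself note, this amounts to the ascension $\sum_\varphi\varphi(f^{1/p^e}R^{1/p^e})\subseteq\sum_\psi\psi(f^{1/p^{e+1}}R^{1/p^{e+1}})$, but that ascension is precisely Lemma~\ref{increase}, and Lemma~\ref{increase} is proved under the hypothesis that $R$ is \emph{Frobenius split} (one needs a splitting $\pi:R^{1/p}\to R$ to produce $\varphi\circ\pi^{1/p^e}$ at the next level). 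It is not a consequence of test-element theory and it is not available for an arbitrary reduced F-finite ring, which is your standing hypothesis at that stage of the proof. The paper avoids this entirely by first reducing to the local case via Lemma~\ref{Fregprop} and then observing the elementary fact that if every generator $\varphi(c^{1/p^e})$ of a nontrivial ideal lay in $m$, the ideal would lie in $m$; since $\tau(R_m)=R_m$, some generator must be a unit $u$, and $u^{-1}\varphi$ does the job. Your proof can be repaired in exactly this way: localize first (legitimate by Lemma~\ref{Fregprop}(1)), then the ``single-map'' conclusion becomes the trivial local observation rather than an appeal to an unproved global clause. Alternatively, one can note that $\tau(R)=R$ does imply $R$ is Frobenius split (again by localizing and extracting a unit, then rescaling), after which Lemma~\ref{increase} applies---but that detour lands you in the same place. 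As written, though, the step from ``$1\in J_f$'' to ``one $\varphi,e$ works'' is unjustified for general $R$, and the phrase ``equivalently'' linking it to the generator formula is not accurate. Also, the reference to Lemma~\ref{generator} (the Gorenstein trace lemma) in your opening paragraph should presumably be to Lemma~\ref{genlem}.
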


The proof uses the following important lemma.
\begin{lemma}\label{genlem}
Let $c$ be an element of  a reduced F-finite ring $R$. The   ideal  generated by all elements $\phi(c^{1/p^e})$ as we range over all $e$ and all $\phi \in  \Hom_R(R^{1/p^e},R)$ is uniformly F compatible. In particular, fixing any  $c \in \tau(R)$ but not in any minimal prime of $R$, the elements  $\phi(c^{1/p^e})$ generate $\tau$.
\end{lemma}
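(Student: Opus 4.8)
The plan is to prove two things: first, that the ideal $J_c$ generated by all $\phi(c^{1/p^e})$ (over all $e$ and all $\phi \in \Hom_R(R^{1/p^e},R)$) is uniformly $F$-compatible; second, that if $c$ lies in $\tau(R)$ but in no minimal prime, then $J_c = \tau(R)$. The second assertion will follow almost immediately from the first once we check $J_c$ is nonzero and not contained in any minimal prime: $J_c$ is then a nonzero uniformly $F$-compatible ideal avoiding the minimal primes, so by the defining minimality property of the test ideal, $\tau(R) \subseteq J_c$; conversely each generator $\phi(c^{1/p^e})$ lies in $\tau(R)$ by uniform $F$-compatibility of $\tau(R)$ applied to $c \in \tau(R)$, so $J_c \subseteq \tau(R)$.

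For the first assertion, fix a generator $\phi(c^{1/p^e})$ and an arbitrary $R$-linear map $\psi : R^{1/p^{e'}} \to R$; I must show $\psi\bigl((\phi(c^{1/p^e}))^{1/p^{e'}}\bigr) \in J_c$. The key manipulation is to recognize $\psi\bigl((\phi(c^{1/p^e}))^{1/p^{e'}}\bigr)$ as $\theta(c^{1/p^{e+e'}})$ for a suitable composite map $\theta \in \Hom_R(R^{1/p^{e+e'}}, R)$. Concretely, taking $p^{e'}$-th roots is an isomorphism (on reduced $R$, inside the total quotient ring) that carries the map $\phi : R^{1/p^e}\to R$ to a map $\phi^{1/p^{e'}} : R^{1/p^{e+e'}} \to R^{1/p^{e'}}$, which is $R^{1/p^{e'}}$-linear and in particular $R$-linear. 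Then $\theta := \psi \circ \phi^{1/p^{e'}} : R^{1/p^{e+e'}} \to R$ is $R$-linear and satisfies $\theta(c^{1/p^{e+e'}}) = \psi\bigl(\phi^{1/p^{e'}}(c^{1/p^{e+e'}})\bigr) = \psi\bigl((\phi(c^{1/p^e}))^{1/p^{e'}}\bigr)$. Hence this element is of the form $\theta(c^{1/p^{(e+e')}})$ and so lies in $J_c$ by definition. This shows $\psi(J_c^{1/p^{e'}}) \subseteq J_c$ for every $\psi$ and every $e'$, i.e. $J_c$ is uniformly $F$-compatible.

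It remains to check $J_c$ is nonzero and avoids the minimal primes when $c$ does. Nonzero is clear: the identity map (well, $e=0$, or more honestly: for large $e$ one can find $\phi$ with $\phi(c^{1/p^e})\neq 0$, since $c\ne 0$ and $R^{1/p^e}$ is a faithful $R$-module, so $\Hom_R(R^{1/p^e},R)$ separates the nonzero element $c^{1/p^e}$ — and in fact for $c$ in no minimal prime one can take $e$ with $c^{1/p^e}$ part of a minimal generating set as in the proof of Proposition~\ref{RegimpliesFreg}). To see $J_c$ is not contained in a minimal prime $P$: localize at $P$; since $R_P$ has no nilpotents but is Artinian local with nilpotent maximal ideal only if $P$ minimal, $R_P$ is a field, so $R_P \to R_P^{1/p^e}$ splits and the splitting sends $c^{1/p^e}$ (a unit of $R_P^{1/p^e}$, as $c\notin P$) to a unit; this produces $\phi$ with $\phi(c^{1/p^e})\notin P$. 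I expect the main obstacle to be purely bookkeeping: making the ``take $p^{e'}$-th roots'' functoriality on $\Hom$-modules precise and tracking the $R$- versus $R^{1/p^{e'}}$-linearity carefully, together with the nonvanishing/minimal-prime-avoidance bootstrapping — none of it deep, but it is where a careless argument would slip.
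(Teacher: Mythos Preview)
Your argument is essentially the same as the paper's: both establish uniform $F$-compatibility by recognizing $\psi\bigl((\phi(c^{1/p^e}))^{1/p^{e'}}\bigr)$ as $(\psi\circ\phi^{1/p^{e'}})(c^{1/p^{e+e'}})$, and both deduce the second statement from the minimality of $\tau(R)$.

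One small bookkeeping point: to show $\psi(J_c^{1/p^{e'}})\subseteq J_c$ you check $\psi$ only on the elements $(\phi(c^{1/p^e}))^{1/p^{e'}}$, but $J_c^{1/p^{e'}}$ is generated by these over $R^{1/p^{e'}}$, not over $R$. The paper handles this by explicitly carrying along an $R$-coefficient $r$, computing $\psi\bigl(r^{1/p^{e'}}(\phi(c^{1/p^e}))^{1/p^{e'}}\bigr)$ and absorbing $r^{1/p^{e'}}$ into the composite map. Your argument covers this implicitly since $\psi\circ(\text{mult.\ by }s)$ for $s\in R^{1/p^{e'}}$ is again in $\Hom_R(R^{1/p^{e'}},R)$, but you should say so. Conversely, you are more careful than the paper in verifying that $J_c$ avoids the minimal primes (your localization argument is fine); the paper simply invokes minimality without comment.
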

\begin{proof}[Proof of Lemma]
Let $J$ be the ideal generated by the $\phi(c^{1/p^e})$. 
We need to show that elements of the form $[r \phi(c^{1/p^e})]^{1/p^f}$ are taken into $J$ by any $\psi \in 
 \Hom_R(R^{1/p^f},R)$.  But 
 $$
 \psi[r^{1/p^f}[\phi(c^{1/p^e})]^{1/p^f}] =
 \psi[r^{1/p^f}[\phi^{1/p^f}(c^{1/p^{e+f}})] =
 ( \psi  \circ (\phi\circ r)^{1/p^f})(c^{1/p^{e+f}})
 $$ where $\psi  \circ (\phi \circ r)^{1/p^f}$ is the $R$-linear map 
 $$
 R^{1/p^{e+f} }\overset{r^{1/p^f}}\rightarrow  R^{1/p^{e+f} }\overset{\phi^{1/p^f}}\rightarrow 
R^{1/p^f} \overset{\psi}\rightarrow R.
$$
The second equality above is satisfied because $\phi^{1/p^f}$ is $R^{1/p^f}$ linear. The second statement of the Lemma follows by the minimality of the test ideal. The lemma is proved. 
\end{proof}

\begin{proof}[Proof of Theorem]
To prove (1),  the point is that $R^{1/p^e}$ is a finitely generated $R$-module, so  that  $$\Hom_{RU^{-1}}((RU^{-1})^{1/p^e},RU^{-1}) \cong 
 \Hom_R(R^{1/p^e},R) \otimes_R RU^{-1}.$$   
 Take any $c \in R$ (not in any minimal prime)  such that $c \in \tau(R)$ and $\frac{c}{1} \in \tau(RU^{-1}).$ By the Lemma above, 
   both $\tau(RU^{-1})$ and $\tau(R)U^{-1}$ are ideals of $RU^{-1}$ generated by elements of the form $\frac{\phi(c^{1/p^e})}{1} = \frac{\phi}{1}((\frac{c}{1})^{1/p^e})$ as we range through all 
  $\phi \in  \Hom_R(R^{1/p^e},R) $ and all  $e \in \mathbb N.$  That is, the ideals 
   $
 \tau(RU^{-1}))$ and $ \tau(R) U^{-1}
 $  coincide. 
  The second statement  follows similarly, since 
  $\Hom_{\hat R_\mathfrak p}(\hat R_{\mathfrak p}^{1/p^e}, \hat R_{\mathfrak p}) \cong 
 \Hom_{R_{\mathfrak p }}({R_{\mathfrak p}}^{1/p^e},R_{\mathfrak p}) \otimes_{R_{\mathfrak p} }\hat R_{\mathfrak p}.$

 For (2), assume for simplicity that  $R$ is a domain.{\footnote{Else replace ``non-zero" by ``not in any minimal prime" throughout.}} 
If $R$ is F-regular, then for any non-zero $c$, 
there exists $e$ and $\phi \in  \Hom_R(R^{1/p^e},R)$ such that $\phi(c^{1/p^e}) = 1$. 
This means that every uniformly F-compatible ideal contains $1$. In particular, $\tau(R)$ is trivial.  
Conversely,  assume $ \tau(R)$ is trivial. By (1), also  $\tau(R_m)$ is trivial  for each maximal ideal $m$. 
  For any non-zero element $c \in R_m$, the lemma implies that  
the elements $\phi(c^{1/p^e})$ can not be all contained in $m$ as  $\phi $ ranges over all $ \Hom_{R_m}(R_m^{1/p^e},R_m)$.  Thus there exists $\phi \in \Hom_{R_m}(R_m^{1/p^e},R_m)$ such that $\phi(c^{1/p^e})$ is a unit, and hence $R_m$ is F-regular. Since this holds for each maximal ideal, we conclude that $R$ is F-regular by Proposition \ref{Fregprop}. 

Statement (3) follows from (1) and (2) together. 
\end{proof}

The lattice of uniformly $F$ compatible ideals is especially nice in a Frobenius split ring. The following follows immediately from Proposition \ref{compprop} (3). 

\begin{Cor}
Every uniformly $F$ compatible ideal in a Frobenius split ring is radical. In particular, the test ideal in a Frobenius split ring is radical. 
\end{Cor}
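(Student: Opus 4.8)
The plan is to deduce everything from Proposition \ref{compprop}(3), which already records that every ideal compatible with a Frobenius splitting is radical. Since $R$ is Frobenius split, by definition there is an $R$-linear map $\varphi \colon R^{1/p} \to R$ with $\varphi(1) = 1$. The first observation I would make is purely logical: this $\varphi$ is one of the maps $R^{1/p^e} \to R$ (the case $e = 1$) over which the definition of uniform $F$ compatibility quantifies, so if $J$ is uniformly $F$ compatible then, a fortiori, $J$ is $\varphi$-compatible for this particular Frobenius splitting $\varphi$.

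Having made that observation, I would simply invoke Proposition \ref{compprop}(3): because $\varphi$ is a Frobenius splitting and $J$ is $\varphi$-compatible, $J$ is radical. This proves the first assertion. For the second assertion, I would recall that by Definition \ref{defn: test ideal classical} the test ideal $\tau(R)$ is by construction the smallest nonzero uniformly $F$ compatible ideal; in particular it \emph{is} a uniformly $F$ compatible ideal, so the first part of the corollary applies to it directly and gives that $\tau(R)$ is radical.

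There is essentially no obstacle here; the proof is a two-line unwinding of definitions together with a citation of Proposition \ref{compprop}(3). The only point worth stating carefully is the direction of the implication — a uniformly $F$ compatible ideal satisfies the compatibility condition for \emph{every} $R$-linear map $R^{1/p^e}\to R$, hence in particular for a chosen splitting — and the actual mathematical content (that $R/J$ inherits the splitting and is therefore reduced) is already absorbed into the proof of Proposition \ref{compprop}(3).
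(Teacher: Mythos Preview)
Your proposal is correct and is exactly the approach the paper takes: the paper simply records that the corollary ``follows immediately from Proposition~\ref{compprop}(3),'' and your argument is the natural unpacking of that one-line remark. The only thing you add beyond the paper is the explicit observation that a uniformly $F$ compatible ideal is in particular $\varphi$-compatible for the chosen splitting $\varphi$, which is precisely the logical step needed to invoke the proposition.
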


The converse is not true: the ring $R=\frac{\mathbb F_p[x,y,z]}{(x^3 + y^3 + z^3)}$ has test ideal $(x, y, z)$ for all characteristics $p \neq 3$, but $R$ is not Frobenius split if $p = 2 \,{\text{mod}} \,3$. See \cite[Example 6.3]{SmithTestIdealsLocalRings}.

\subsection{Splitting Primes and Centers of F-purity.}
One might also wonder whether the {\it largest\/} proper  ideal compatible with respect to all $\phi$ might also be of interest? We know  a largest such exists by the Noetherian property of $R$, since the sum of uniformly $F$ compatible ideals is uniformly $F$ compatible. For Frobenius split $R$, 
this largest compatible ideal turns out to be the {\bf splitting prime\/} of Aberbach and Enescu  \cite{AberbachEnescuStructureOfFPure}. It is also the minimal center of F-purity in the language of Schwede {\cite{SchwedeCentersOfFPurity}}. The prime uniformly compatible ideals are what Schwede  calls {\it centers of F-purity}.  He shows that they are ``characteristic $p$ analogs" of Kawamata's centers of log canonicity, and that they satisfy an analog  Kawamata's subadjunction \cite{KawamataSubAdjII}. See \cite{SchwedeFAdjunction}.

\subsection{The Frobenius filtration of a Frobenius split ring.} 
We have already observed that when $R$ is a Frobenius split ring, the set of uniformly $F$ compatible ideals forms a (finite) lattice of radical ideals closed under addition and intersection. An interesting observation of 
Janet  Vassilev 
\cite{VassilevTestIdeals} creates a distinguished chain in this lattice.  

\begin{lemma} If $\tau$ is a uniformly $F$ compatible  ideal of $R$, then the pre-image in $R$ of any compatibly split ideal of $R/\tau$ is compatibly split in $R$.
\end{lemma}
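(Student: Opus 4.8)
The plan is to run the diagram of Definition~\ref{compatible} ``relative to $\tau$''. Fix any $e$ and any $R$-linear map $\varphi\colon R^{1/p^e}\to R$. Because $\tau$ is uniformly $F$ compatible we have $\varphi(\tau^{1/p^e})\subseteq\tau$, and this is exactly the condition which makes $\varphi$ descend to an $(R/\tau)$-linear map $\bar\varphi\colon (R/\tau)^{1/p^e}\to R/\tau$ compatible with the natural surjections $R^{1/p^e}\twoheadrightarrow(R/\tau)^{1/p^e}$ and $R\twoheadrightarrow R/\tau$. So from one map on $R$ we get one map on $R/\tau$, sitting in a commutative square.

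Next let $\bar J$ be an ideal of $R/\tau$ and let $J$ be its preimage in $R$, so that $\tau\subseteq J$ and $J\twoheadrightarrow\bar J$. Since $J\twoheadrightarrow\bar J$ is onto, the image of $J^{1/p^e}$ under $R^{1/p^e}\twoheadrightarrow(R/\tau)^{1/p^e}$ is precisely $\bar J^{1/p^e}$. Now chase an element $\xi\in J^{1/p^e}$ around the square: its image $\bar\xi$ lies in $\bar J^{1/p^e}$, so $\bar\varphi(\bar\xi)\in\bar J$ provided $\bar J$ is compatible with $\bar\varphi$; but $\bar\varphi(\bar\xi)$ is the reduction mod $\tau$ of $\varphi(\xi)$, so $\varphi(\xi)$ lies in the preimage $J$ of $\bar J$. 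Hence $\varphi(J^{1/p^e})\subseteq J$, that is, $J$ is $\varphi$-compatible.

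Finally, the conclusion $\varphi(J^{1/p^e})\subseteq J$ was obtained for an \emph{arbitrary} $\varphi$, and a compatibly split (equivalently, in this Frobenius split setting, uniformly $F$ compatible) ideal $\bar J$ of $R/\tau$ is by definition compatible with \emph{every} $(R/\tau)$-linear map out of $(R/\tau)^{1/p^e}$, in particular with each descended $\bar\varphi$. Running the chase once for each $\varphi$ therefore shows $J$ is compatible with every $\varphi$, i.e.\ $J$ is uniformly $F$ compatible in $R$; if instead one fixes a single Frobenius splitting $\pi$ of $R$, one applies the chase only to $\varphi=\pi$ and its descent $\bar\pi$ and concludes that $J$ is $\pi$-compatibly split.

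I do not anticipate a genuine obstacle here: the whole argument is a single commuting-square diagram chase, and the only step that uses something beyond formalities is the descent of each $\varphi$, which is precisely the content of $\tau$ being uniformly $F$ compatible. The one point worth stating carefully is that we do \emph{not} need the restriction map $\Hom_R(R^{1/p^e},R)\to\Hom_{R/\tau}((R/\tau)^{1/p^e},R/\tau)$ to be surjective---which it generally is not---because we only use the easy direction (each $\varphi$ on $R$ produces \emph{some} $\bar\varphi$ on $R/\tau$) together with the hypothesis that $\bar J$ is already compatible with \emph{all} maps on $R/\tau$.
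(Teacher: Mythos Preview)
Your proof is correct and follows essentially the same approach as the paper's: both descend an arbitrary $\varphi\in\Hom_R(R^{1/p^e},R)$ to $\bar\varphi$ on $R/\tau$ using the uniform $F$ compatibility of $\tau$, then use compatibility of $J/\tau$ with $\bar\varphi$ to conclude $\varphi(J^{1/p^e})\subseteq J$. Your version is more explicit about the diagram chase and helpfully notes that surjectivity of $\Hom_R(R^{1/p^e},R)\to\Hom_{R/\tau}((R/\tau)^{1/p^e},R/\tau)$ is not needed, but the underlying argument is the same.
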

\begin{proof}
Let $J$ be the preimage in $R$ of a uniformly compatibly split ideal of $R/\tau$. 
Let $R^{1/p^e} \overset{\phi}\rightarrow R$ be any $R$-module homomorphism. Because $\tau$ is uniformly $F$ compatible in $R$, there is an induced map of $R/\tau$-modules  $ (R/\tau)^{1/p^e} \overset{\bar\phi}\rightarrow R/\tau,$ and because $J/\tau$ is uniformly $F$ compatible in  $R/\tau,$ $\bar\phi$ descends to a map   $(R/J)^{1/p^e} \rightarrow R/J.$ But this is exactly what it means that $J$ is uniformly $F$ compatible in $R$.  
\end{proof}

To construct Vassilev's chain, start with 
 a Frobenius split ring $R$,  with  test ideal $\tau_0$. Because $\tau_0$ is compatible with respect to some (indeed, every) Frobenius splitting, the ring $R/\tau_0$ is also Frobenius split. Let $\tau_1$ be the preimage of the test ideal $\tau(R/\tau_0)$ of $R/\tau_0$ in $R$. By the Lemma, $\tau_1$ is also uniformly $F$ compatible, so $R/\tau_1$ is Frobenius split, and so its test ideal  lifts to an ideal $\tau_2$. Continuing in this way, we produce a chain $\tau_0 \subset \tau_1 \subset \dots \subset \tau_t$ of radical ideals, all uniformly $F$ compatible. 
Since the test ideal is never contained in a minimal prime, each ideal in the chain has strictly larger height than its predecessor, and since $\tau_0$ defines the non-F-regular locus of $R$, we see that the length of Vassilev's chain is bounded by the dimension of the non-F-regular set of $R$.


\subsection{Trace of Frobenius}\label{trace}
To check that an ideal is uniformly $F$ compatible, we do not actually have to test compatibility  with respect to {\it all } homomorphisms $\varphi: R^{1/p^e} \rightarrow R$. Since $ \Hom_R(R^{1/p^e},R)$ is a finitely generated $R^{1/p^e}$-module, it is enough to check compatibility with respect to a finite set of $R^{1/p^e}$-generators for each $e$. In the Gorenstein case, this takes an especially nice form: 
\begin{lemma}\label{generator}
If $(R, m)$ is an F-finite Gorenstein local ring, then $ \Hom_R(R^{1/p},R)$ is a cyclic $R^{1/p}$-module. 
An ideal $J$ is uniformally compatible if and only if it is compatible with respect to an $R^{1/p}$-module generator for  
$ \Hom_R(R^{1/p},R).$ \end{lemma}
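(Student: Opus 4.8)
The statement has two halves, and I would begin with the structural one: that $M:=\Hom_R(R^{1/p},R)$ is a cyclic $R^{1/p}$-module. The plan is to run exactly the Grothendieck-duality computation used in the proof of Lemma \ref{Duality}, but in the affine Gorenstein setting. Since $R$ is Gorenstein, its canonical module $\omega_R$ is free of rank one, so $M\cong\Hom_R(R^{1/p},\omega_R)$; and Grothendieck duality for the finite morphism $\Spec R^{1/p}\to\Spec R$ identifies $\Hom_R(R^{1/p},\omega_R)$ with the canonical module $\omega_{R^{1/p}}$ of $R^{1/p}$, compatibly with the $R^{1/p}$-module structures. Now $R^{1/p}$ is again a Gorenstein local ring --- being reduced, it is isomorphic to $R$ as a ring via the $p$-th root map --- so $\omega_{R^{1/p}}$ is free of rank one over $R^{1/p}$. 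Hence $M\cong R^{1/p}$; fix a free generator $\Phi\in M$. By construction every $\phi\in\Hom_R(R^{1/p},R)$ can be written uniquely as $\phi(-)=\Phi(s^{1/p}\cdot-)$ for some $s\in R$. (This is the cyclicity familiar from Fedder's criterion; cf.\ Lemma \ref{Duality}.)

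One implication of the ``if and only if'' is then immediate: a uniformly $F$ compatible ideal is compatible with \emph{every} $R$-linear map out of $R^{1/p^e}$, so in particular with $\Phi$. For the converse, assume $\Phi(J^{1/p})\subseteq J$ and set out to show $\varphi(J^{1/p^e})\subseteq J$ for all $e$ and all $\varphi\in\Hom_R(R^{1/p^e},R)$. I would first propagate the cyclicity up the tower $R\subseteq R^{1/p}\subseteq\cdots\subseteq R^{1/p^e}$: iterating the coinduction adjunction $\Hom_R(R^{1/p^e},R)\cong\Hom_{R^{1/p}}\!\bigl(R^{1/p^e},\Hom_R(R^{1/p},R)\bigr)$, using the identification $\Hom_R(R^{1/p},R)\cong R^{1/p}$ afforded by $\Phi$, and applying the ring isomorphism $R\cong R^{1/p}$ at each stage, one checks by induction that $\Hom_R(R^{1/p^e},R)$ is free of rank one over $R^{1/p^e}$ with generator the iterated composite
\[
\Phi_e \;:=\; \Phi\circ\Phi^{1/p}\circ\cdots\circ\Phi^{1/p^{e-1}}\colon R^{1/p^e}\longrightarrow R,
\]
where $\Phi^{1/p^i}$ denotes $\Phi$ with every entry replaced by its $p^i$-th root; equivalently $\Phi_e=\Phi\circ(\Phi_{e-1})^{1/p}$.

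Next comes a short induction showing $\Phi_e(J^{1/p^e})\subseteq J$. The base case $\Phi_1=\Phi$ is the hypothesis, and assuming $\Phi_{e-1}(J^{1/p^{e-1}})\subseteq J$, applying $(-)^{1/p}$ gives $(\Phi_{e-1})^{1/p}(J^{1/p^e})=\bigl(\Phi_{e-1}(J^{1/p^{e-1}})\bigr)^{1/p}\subseteq J^{1/p}$, whence $\Phi_e(J^{1/p^e})=\Phi\bigl((\Phi_{e-1})^{1/p}(J^{1/p^e})\bigr)\subseteq\Phi(J^{1/p})\subseteq J$. Finally, for an arbitrary $\varphi\in\Hom_R(R^{1/p^e},R)$ the cyclicity lets me write $\varphi(-)=\Phi_e(s^{1/p^e}\cdot-)$ for some $s\in R$; then for $j\in J$ we get $\varphi(j^{1/p^e})=\Phi_e\bigl((sj)^{1/p^e}\bigr)\in\Phi_e(J^{1/p^e})\subseteq J$, since $sj\in J$. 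Thus $J$ is compatible with every $\varphi$ and every $e$, i.e.\ uniformly $F$ compatible. The only genuinely non-formal step is the cyclicity of $\Hom_R(R^{1/p},R)$ over $R^{1/p}$ --- and with it the identification of the composites $\Phi_e$ as generators --- which is precisely where the Gorenstein hypothesis enters, through duality; everything after that is routine bookkeeping with ideals and $p$-th roots.
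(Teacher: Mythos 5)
Your proof is correct and follows essentially the same approach as the paper: Gorenstein duality gives cyclicity of $\Hom_R(R^{1/p},R)$ over $R^{1/p}$, the iterated composite $\Phi_e=\Phi\circ\Phi^{1/p}\circ\cdots\circ\Phi^{1/p^{e-1}}$ generates $\Hom_R(R^{1/p^e},R)$, and cyclicity reduces checking all maps to checking $\Phi_e$, which is handled by induction. The only cosmetic differences are that you peel off the leftmost factor $\Phi$ in the induction where the paper peels off the rightmost $\Psi^{1/p^{e-1}}$, and you justify the generator claim via the adjunction tower and the ring isomorphism $R\cong R^{1/p}$ whereas the paper uses a Nakayama argument.
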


\begin{proof} 
The point is that  if $R \rightarrow S$ is a finite map of rings with canonical module, there is an $S$-module isomorphism 
$\omega_S \cong  \Hom_R(S, \omega_R)$ \cite[Thm 3.7.7]{BrunsHerzogCMrings}.   If $R$ is a Gorenstein local ring, then $R$ is a canonical module for $R$ (and so of course ${R^{1/p}}$ is a canonical module for $R^{1/p}$),  so the first statement follows. 

By the same argument,  each  $ \Hom_R(R^{1/p^e},R)$ is a cyclic $R^{1/p^e}$-module. 
Moreover, if $\Psi$ is a generator for $ \Hom_R(R^{1/p},R),$ one easily checks that 
the composition map 
$$
\Psi_e = \Psi \circ \Psi^{1/p} \circ \dots \circ \Psi^{1/p^{e-1}}
$$ is a $R^{1/p^e}$ generator for $ \Hom_R(R^{1/p^e},R).$ [For example, it is easy to check that $\Psi_e$ is not in $m_{R^{1/p^e}}\Hom_R(R^{1/p^e},R),$ so it must be a generator by Nakayama's Lemma.] For example, $\Psi_2$ is the composition
$$
R^{1/p^2}  \,\,\overset{\Psi^{1/p}}\longrightarrow \,\,R^{1/p} \,\,\overset{\Psi}\longrightarrow \,\,R
$$
$$
r^{1/p^2} \mapsto [\Psi(r^{1/p})]^{1/p}  \mapsto \Psi ([\Psi(r^{1/p})]^{1/p}).
$$

Now, consider an ideal $J$ which is $\Psi$-compatible. Any $\varphi \in  \Hom_R(R^{1/p^e},R),$ 
can be written as $ \Psi_e \circ r^{1/p^e}$. So 
$$
\varphi(J^{1/p^e}) =  \Psi_e (r^{1/p^e}J^{1/p^e}) \subset  \Psi_e (J^{1/p^e}),$$
which by definition of $\Psi_e$ is the same as 
$$
 \Psi_{e-1} ( \Psi^{1/p^{e-1}} (J^{1/p^e})) =  \Psi_{e-1} ( [(\Psi (J^{1/p})]^{1/p^{e-1}}). 
 $$
 This   is contained in 
 $ \Psi_{e-1} (J^{1/p^{e-1}} ) $ because $\Psi (J^{1/p}) \subset J$ by $\Psi$-compatibility of $J$. 
 Finally, this is contained in $J$
by induction on $e$.  Thus any $\Psi$-compatible ideal is uniformly $F$ compatible. 
 \end{proof}
 
The generator in Lemma \ref{generator} is uniquely defined up to multiplication by a unit in $R^{1/p^e}$. It is sometimes abusively called the {\bf ``trace of the Frobenius map"}. 

For any  F-finite{\footnote{F-finite rings always admit a canonical module \cite[13.6]{GabberTStructures}.}} ring $R$, we can dualize the Frobenius map $R\rightarrow R^{1/p}$ into $\omega_R$: 
$$
\Hom_R(R^{1/p},\omega_R) \longrightarrow  \Hom_R(R,\omega_R)
 $$
 which produces an $R$-module map 
 $$
F_*\omega_{R^{1/p}} \rightarrow \omega_R,
 $$
 the {\it trace} of Frobenius.{\footnote{This map is only as canonical as the choice of $\omega_R$, so the ``the" is slightly misleading. Of course in geometric situations where the canonical module is defined by differential forms, there is a canonical choice.}}  In notation more common in algebraic geometry: the dual of the Frobenius map $R \rightarrow F_*R$ is the trace map $F_*\omega_R \rightarrow \omega_R$.  
 For smooth projective varieties, this is called the Cartier map.   If $R$ is local and Gorenstein, of course, this can be identified with a map 
 $F_*R \rightarrow R$, which will be a generator for  $ \Hom_R(R^{1/p^e},R).$ 
 
 Using $\omega_X$ has the advantage of globalizing; we have already encountered this idea in Section \ref{antican}. See    \cite{SchwedeTuckerTestidealsFiniteMorphism} or  \cite{BrionKumar}  for more on the trace map.

 \begin{ex}The trace of the Frobenius map is {\it not} usually a Frobenius splitting!
For example, if $R = \mathbb F_p[x, y]$, we recall that 
the monomials $x^ay^b$ where $0 \leq a, b, \leq p-1$ form a basis for $F_*R$ over $R$.
As the trace map, we can take
the $R$-linear map 
$
R^{1/p} \overset{\Psi}\longrightarrow R$ 
sending 
$ (xy)^{\frac{p-1}{p}} $ to $1$ and all other monomials in the basis to zero. This is clearly {\it not} a Frobenius splitting. 
The Frobenius splitting of Example \ref{ex3} can be obtained as $\Psi \circ (xy)^{\frac{p-1}{p}}$.
\end{ex}

\begin{Rmk}\label{Cartier}
 Blickle's  {\it Cartier algebras}   give another point of view on test ideals and uniformly $F$ compatible ideals   \cite{BlickleTestIdealsp-eLinearMaps}. 
 An $R$-module map $R^{1/p^e}\rightarrow R$ can be viewed  as an additive map $R\overset{\phi}\longrightarrow R$ satisfying 
 $\phi(r^{p^e} x) = r\phi(x)$ for any $r, x \in R$.   Blickle and B\"ockle dub this a $p^{-e}$-linear map
 \cite{BlickleBockleCartierModulesFiniteness}. 
 This point of view has the advantage that composition is slicker---  the source and target are always $R$---so we can easily compose such maps.  Indeed, the composition of $p^{-e}$ and $p^{-f}$ linear maps is easily checked to be $p^{-e-f}$-linear. 
  The  {\bf Cartier algebra}{\footnote{Here we assume that $R$ is reduced and of dimension greater than zero. In general, the definition of Cartier algebra is slightly more technical, but it reduces to this under very mild conditions. See  \cite{BlickleTestIdealsp-eLinearMaps}. }}
    $\mathcal C(R)$ is the subalgebra of 
 $\Hom_{\mathbb Z}(R, R)$ generated by all $p^{-e}$-linear maps (as we range over all $e$). 
Clearly $R$ is a module over  $\mathcal C(R)$, and  clearly its   $\mathcal C(R)$-submodules are precisely the uniformly F-compatible ideals.
The trace map can also be easily interpreted in this language: 
 in the Gorenstein local case, the trace $\Psi_e$ of Lemma \ref{generator} is literally the composition of $\Psi$ with itself $e$-times, so that  $\Psi$ generates $\mathcal C(R)$ as an $R$-algebra.   Blickle develops the {\it ultimate generalization of test ideals} in \cite{BlickleTestIdealsp-eLinearMaps} by looking at submodules of $R$ and other modules under various distinguished subalgebras of (variants of)  the Cartier algebra. \end{Rmk}

\begin{Rmk}\label{morehistory}
The  uniformly $F$ compatible ideals have been studied for many years in the tight closure literature under many different names. They were first studied by Smith in the local Gorenstein case \cite{SmithRationalSingularities} where they are called {\it F-ideals}, and soon after for more general local rings in \cite{LyubeznikSmith2},
where they are descriptively called  annihilators of $\mathcal F$-submodules of $E$. Both these papers have a  dual point of view  to our current perspective, which was first proposed by Schwede in   \cite{SchwedeCentersOfFPurity}, and it is not obvious that the definitions there produce precisely the uniformly $F$ compatible ideals 
(see \cite[Thm 4.1]{EnescuHochsterTheFrobeniusStructureOfLocalCohomology} for a proof).  Schwede used the term  
{\it  uniformly F-compatible} ideals, in a nod to the connection with Mehta and Ramanathan's notion of compatibly Frobenius split subschemes. In  \cite{SchwedeCentersOfFPurity}, Schwede shows how the prime compatible ideals (which he calls centers of sharp F-purity) can be viewed characteristic $p$ analogs of log canonical centers.  
The term $\phi$-compatible is lifted from the survey \cite{SchwedeTuckerSurvey}.
Generalizations of uniformly $F$ compatible ideals also come up in the work of Blickle ({\it e.g.} \cite{BlickleTestIdealsp-eLinearMaps})  under the name of Cartier-submodules and crystals; see the survey \cite{BlickleSchwedeSurvey}.
\end{Rmk}

  \section{Test Ideals for Pairs.}
As deeper connections between Frobenius splitting and singularities in birational geometry emerged, it was natural to look for generalizations of the characteristic $p$ story to  ``pairs," the natural setting for much of the geometry. For example, with the realization that the multiplier ideal ``reduces mod $p$ to the test ideal" (when the former is defined; see \cite{SmithMulplierIdealUniversalTestIdeal}, \cite{HaraGeometricInterpTestideals}), interest rose in defining test ideals for pairs, since this was the main setting for multiplier ideals. 
After Hara and Watanabe introduced Frobenius splitting for  pairs \cite{HaraWatanabe}, the 
   theory  of tight closure for pairs quickly developed in a series of technical  papers  by the Japanese school of tight closure, beginning about the time of the last decade's special year in commutative algebra at MSRI. In particular, a theory of test ideals for pairs was introduced by Hara, Yoshida and Takagi   \cite{HaraYoshidaGeneralizationOfTightClosure} and \cite{HaraTakagiGeneralizationTestIdeals}.

In this lecture, we introduce the theory of test ideals for pairs, focusing on the case where the ambient variety is smooth and affine---the ``classical algebro-geometric setting."
We do not  use the traditional tight closure definition,  but rather an equivalent definition first proposed in \cite{BMS1}. By shunning the most general setting, and  instead working in the simplest useful setting, we hope to highlight  the elegance of  test ideal arguments when the ambient ring is regular. In particular, we  give elementary proofs of all the basic properties, several of which do not seem to have been noticed before.  As an application, we include a self-contained proof of a well-known theorem on the behavior of symbolic powers of ideals in a regular ring following the analogous multiplier ideal proof in \cite{ELS1}.   Cf. \cite{HaraCharPMultiplierIdeals}.

 Let $R$ be an F-finite domain, and  let $\fa$ be an ideal of $R$. For each non-negative real number $t$, we associate an ideal{\footnote{If $t$ is a natural number, the notation $\tau(R,\fa^t)$ could be interpreted to  mean the test ideal of the ideal $\fa^t$ (with exponent 1) or to mean the test ideal of the ideal $\fa$ with exponent $t$. Fortunately, these  ideals are the same (as will soon be revealed when we give the definition), so  the danger of confusion is minimal.}}
 \[\{t\in\R_{\geq 0}\}\ \rightsquigarrow\ \{\tau(R,\fa^t)\}_{\R_{\geq 0}}.\]
In the classical commutative algebra case, $\fa=R$, and all $\tau(R,\fa^t) $ produce the same ideal, $\tau(R),$  the test ideal discussed in the previous lecture. 
For many years, this was the only test ideal in the literature. 
In the classical algebraic geometry setting, multiplier ideals are much easier to handle in the case where the ambient variety is {\it smooth}; indeed  the emphasis has always been that case. So perhaps it should not be surprising that, returning the commutative algebra to the case  where the ambient ring $R$ is regular, arguments should simplify dramatically for test ideals as well. 

\subsection{Test ideals in ambient regular rings.}
Let $R$ be an F-finite regular domain, and $\fa$ any ideal of $R$. 
We first define the test ideal of a pair  $\tau(R,\fa^t)$ in the special case where  $t$ is a positive rational number whose denominator is a power of $p$. The case of arbitrary $t$ will be obtained by approximating $t$ by a sequence of rational numbers whose denominators are powers of $p$.  When $R$ is clear from the context, we will often write $\tau(\mathfrak a^t)$.

For each $R$-linear map  $\phi: R^{1/p^e} \rightarrow R$, we  consider the image of $\fa$ under $\phi$. That is, looking at the ideal ${\fa}^{1/p^e}$ as an $R$-submodule of
 $R^{1/p^e}$, we consider its image
 $\phi(\fa^{1/p^e}) \subset R$,  which is of course an ideal of $R$. Ranging over all $\phi \in \Hom_R(R^{1/p^e}, R)$, we get the test ideal of $\fa$. Before stating this formally,  we set up  some notation:
 $$
 \fa^{[1/p^e]} := \sum_{\phi \in \Hom_R(R^{1/p^e}, R)} \phi(\fa^{1/p^e}).
 $$
\begin{lemma}\label{increase}
For any ideal $\fa$ in a Frobenius split ring $R$, we have
$$
\fa^{[1/p^e]} \subset (\fa^p)^{[1/p^{e+1}]}, 
$$ with equality if $\fa$ is principal.
\end{lemma}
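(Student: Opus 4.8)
The plan is to use the Frobenius splitting to promote each $R$-linear map $\phi\colon R^{1/p^e}\to R$ occurring in the definition of $\fa^{[1/p^e]}$ to an $R$-linear map $R^{1/p^{e+1}}\to R$ occurring in the definition of $(\fa^p)^{[1/p^{e+1}]}$, matching the relevant ideals via the elementary identity $(a^p)^{1/p^{e+1}}=a^{1/p^e}$. First I would record the mechanism: fix a Frobenius splitting $\pi\colon R^{1/p}\to R$, that is, an $R$-linear map with $\pi(1)=1$; taking $p^e$-th roots everywhere yields an $R^{1/p^e}$-linear map $\pi^{1/p^e}\colon R^{1/p^{e+1}}\to R^{1/p^e}$ with $\pi^{1/p^e}(1)=1$, and being $R^{1/p^e}$-linear and unital it restricts to the identity on the subring $R^{1/p^e}\subseteq R^{1/p^{e+1}}$. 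This projection is the sole place where the Frobenius-split hypothesis is used.

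For the inclusion, a typical generator of $\fa^{[1/p^e]}$ is $\phi(a^{1/p^e})$ with $a\in\fa$ and $\phi\in\Hom_R(R^{1/p^e},R)$. Setting $\psi:=\phi\circ\pi^{1/p^e}\in\Hom_R(R^{1/p^{e+1}},R)$, and noting that $a^p\in\fa^p$ so that $a^{1/p^e}=(a^p)^{1/p^{e+1}}\in(\fa^p)^{1/p^{e+1}}$, we get $\psi(a^{1/p^e})=\phi\bigl(\pi^{1/p^e}(a^{1/p^e})\bigr)=\phi(a^{1/p^e})$ because $\pi^{1/p^e}$ fixes $R^{1/p^e}$. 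Hence $\phi(a^{1/p^e})\in\psi\bigl((\fa^p)^{1/p^{e+1}}\bigr)\subseteq(\fa^p)^{[1/p^{e+1}]}$, which gives $\fa^{[1/p^e]}\subseteq(\fa^p)^{[1/p^{e+1}]}$.

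For the reverse inclusion when $\fa=(f)$, I would use that $\fa^p=(f^p)$ and that the ideal $(f^p)^{1/p^{e+1}}$ of $R^{1/p^{e+1}}$ is $f^{1/p^e}R^{1/p^{e+1}}$, so a generator of $(f^p)^{[1/p^{e+1}]}$ has the form $\psi(f^{1/p^e}t)$ with $\psi\in\Hom_R(R^{1/p^{e+1}},R)$ and $t\in R^{1/p^{e+1}}$; defining $\phi\colon R^{1/p^e}\to R$ by $\phi(x)=\psi(tx)$ for $x\in R^{1/p^e}\subseteq R^{1/p^{e+1}}$ gives an $R$-linear map with $\phi(f^{1/p^e})=\psi(f^{1/p^e}t)\in(f)^{[1/p^e]}$. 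Combined with the first part this proves equality. I do not expect a real obstacle here; the argument is essentially bookkeeping with $p$-power roots, the one point worth flagging being that although $\fa^p$ could a priori denote either the ordinary power or the Frobenius power of $\fa$, both contain $a^p$ for every $a\in\fa$ (so the first inclusion holds in either reading) and the two coincide when $\fa$ is principal.
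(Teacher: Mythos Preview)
Your proof is correct and follows essentially the same approach as the paper: both fix a Frobenius splitting $\pi$, lift it to $\pi^{1/p^e}\colon R^{1/p^{e+1}}\to R^{1/p^e}$, and use the composition $\phi\circ\pi^{1/p^e}$ together with the identity $(a^p)^{1/p^{e+1}}=a^{1/p^e}$ to get the inclusion. In fact you go further than the paper, which states the equality for principal $\fa$ but does not write out the argument; your construction $\phi(x)=\psi(tx)$ is exactly the right way to handle that direction, and your closing remark about the two readings of $\fa^p$ is a nice clarification.
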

\begin{proof}
Fix a Frobenius splitting $\pi:R^{1/p} \rightarrow R$. There is a corresponding splitting 
$$
\pi^{1/p^{e}}:  R^{1/p^{e+1}} \rightarrow R^{1/p^{e}}
$$ 
defined by 
taking $p^e$-th roots of everything in sight. For any $R$-linear map $R^{1/p^e} \overset{\phi}\rightarrow R$,  the composition 
$$
R^{1/p^{e+1}}\overset{\pi^{1/p^e}}\longrightarrow R^{1/p^e} \overset{\phi}\rightarrow R
$$
is an element of $\Hom_R(R^{1/p^{e+1}}, R)$. Now, the 
 ideal  $\fa^{[1/p^e]}$  is generated by elements of the form $\phi(x^{1/p^e}) $, where $x \in \fa$.
 To see that all such elements are also  in 
 $(\fa^p)^{[1/p^{e+1}]}$, note simply that $x^p$ is in $\fa^p,$ and that 
 the composition above sends 
$ (x^p)^{1/p^{e+1}}$ to $ \phi(x)$. This completes the proof. 
\end{proof}

Now, given a rational number $t $ whose denominator is a power of $p$, we can write 
$ t = \frac{n}{p^e} = \frac{np}{p^{e+1}} = \frac{np^2}{p^{e+2}} = \cdots $. The lemma implies  a corresponding increasing sequence of ideals:
\begin{equation} \label{seq}
(\fa^n)^{[1/p^e]} \subset (\fa^{np})^{[1/p^{e+1}]}  \subset (\fa^{np^2})^{[1/p^{e+2}]} \subset \cdots
\end{equation} which must eventually stabilize by the Noetherian property of the ring.
This sequence stabilizes to the  test ideal: 

\begin{defi}
\label{definition of test ideal: special case}
Let $R$ be an F-finite  regular ring of characteristic $p$   and let $\fa$ be an ideal of $R$. 
Fix any positive rational number whose denominator is a power of $p$, say $\frac{n}{p^e}.$
  The test ideal 
\[\tau(\fa^{\frac{n}{p^e}}) =  \bigcup_f (\fa^{np^f})^{[1/{p^{e+f}}]} =  
\sum_{\varphi\in \Hom_R(R^{1/p^{e+f}},  R)}\varphi((\fa^{np^{f}})^{1/p^{e+f}}).\]
That is, if we write the number $t = n/p^e$ with a sufficiently high power of $p$ in the denominator,
the test ideal $\tau(\fa^{\frac{n}{p^e}})$ is the ideal 
$(\fa^n)^{[1/p^e]}$ 
of $R$ generated by the images of the ideal $(\fa^n)^{1/p^e} \subset R^{1/p^e}$ under all projections  $R^{1/p^e} \rightarrow R.$ 
\end{defi}

\begin{Rmk} If $R$ is Frobenius split and $\fa$ is principal,  the sequence \ref{seq} above stabilizes immediately. That is, $ \tau(\fa^{\frac{n}{p^e}}):= (\fa^{n})^{[1/{p^{e}}]}$ for any representation of the fraction $\frac{n}{p^e}$.  \end{Rmk}
\begin{Rmk}
\label{exercise: test ideal of the maximal ideal in two variables}
Let $R=\F_p[[x,y]]$ and $\fa=(x,y)$. Show that  $\tau(\fa^n) = (x, y)^{n-1}$ for all $n \in \mathbb N$. \end{Rmk}

\subsubsection{The case of arbitrary $t$.}  Fix a positive real number $t$.   Choose a  {\it non-increasing\/} sequence of rational numbers $\{ t_n\}$ whose denominators are $p$-th 
powers.{\footnote{For example, we can take Hernandez's  sequence of successive truncations of a non-terminating base $p$ expansion for $t$; see \cite{HernandezFPTDiagonal}.} This allows us to define the test ideal $\tau(\fa^t)$ because for $t_n$ sufficiently close to $t$, the ideals $\tau(\fa^{t_n})$ will all coincide.   Indeed, if $\frac{n}{p^e} > \frac{m}{p^f}$, then getting a common denominator,  we have that 
$  (\fa^{np^f})^{[1/p^{e+f}]} \subset (\fa^{mp^e})^{[1/p^{f+e}]}.$ So
 if $\frac{n}{p^e} > \frac{m}{p^f}$, then clearly
\begin{equation}\label{chain}
  \tau(\fa^{n/p^e}) \subset \tau(\fa^{m/p^f}). 
  \end{equation}
Thus any decreasing sequence of positive rational numbers whose denominators are $p$-th powers must produce an ascending chain of ideals, which stabilizes by the Noetherian property of the ring. If two such descending sequences converge to the same real number $t$, it is clear again  by property (\ref{chain}) that the corresponding chains of ideals must stabilize to the {\it same} ideal. 
Thus we can define:
\begin{defi}
\label{definition of test ideal}\cite{BMS1}
Let $R$ be an F-finite  regular ring of characteristic $p$  and let $\fa$ be an ideal of $R$. For each $t\in \R_{\geq 0}$, we define
\[\tau(\fa^t):=\bigcup_{e\in\N}\tau(\fa^{\frac{\lceil tp^e\rceil }{p^e}}).\]
\end{defi}

The sequence $\frac{\lceil tp^e\rceil }{p^e}$, as $e$ runs through the natural numbers, is a decreasing sequence converging to the real number $t$. We  have picked it  for the sake of definitiveness; {\it any\/} such deceasing  sequence can be used instead, they all produce the same ideal in light of the inclusion \ref{chain}.

\subsection{Properties of Test Ideals.} All the basic properties of test ideals for an ambient regular ring follow easily from the definition, using the flatness of Frobenius for regular rings.
\begin{Thm}
\label{properties of test ideals}
Let $R$ be an F-finite  regular ring of characteristic $p$, with ideals $\fa,\fb$. The following properties of  the test ideal hold:
\begin{enumerate}
\item $\fa\subseteq \fb\ \Rightarrow\ \tau(R,\fa^t)\subseteq \tau(R,\fb^t)$ for all $t\in \mathbb R_{>0}$.
\item $t\geq t'\ \Rightarrow\ \tau(R,\fa^t)\subseteq \tau(R,\fa^{t'})$.
\item $\tau((\fa^n)^t)=\tau(\fa^{nt})$ for each positive integer $n$ and each $t\in\R_{> 0}$.
\item Let $W$ be a multiplicatively closed set in $R$, then
\[\tau(R,\fa^t)RW^{-1}=\tau(RW^{-1},(\fa RW^{-1})^t).\] 
\item Let $\overline{\fa}$ denote the integral closure of $\fa$ in $R$. Then
\[\tau(\overline{\fa}^t)=\tau(\fa^t)\ {\rm for\ all\ }t.\]
\item For each $t\in \R_{> 0}$, there exists an $\varepsilon>0$ such that $\tau(\fa^{t'})=\tau(\fa^t)$ for all $t'\in [t,t+\varepsilon)$.
\item $\fa\subseteq \tau(\fa)$.
\item {\rm (Brian\c con-Skoda\ Theorem{\footnote{In geometry circles, it is typical to refer to this statement as Skoda's Theorem; we adopt the more generous tradition of commutative algebra. This type of statement has also been referred to as a ``Brian\c con-Skoda theorem with coefficients." See {\it e.g.} \cite{AberbachHunekeBrianconSkodawithCoeff} or \cite{AberbachHosryLessRestrictiveBrainconSkoda}. } })} If $\fa$ can be generated by $r$ elements, then for each integer $\ell\geq r$ we have
\[\tau(\fa^{\ell})=\fa\tau(\fa^{\ell-1}).\]
\item {\rm (Restriction\ Theorem)} Let $x\in R$ be a regular parameter and $\fa\ {\rm mod\ }x$ denote the image of $\fa$ in $R/(x)$, then
\[\tau((\fa\ {\rm mod\ }x)^t)\subseteq \tau(\fa^t)\ {\rm mod\ }x.\]
\item {\rm (Subadditivity\ Theorem)} 
If $R$ is essentially of finite type over a perfect field, then 
$\tau(\fa^{tn})\subseteq \tau(\fa^t)^n$ for all $t\in \R_{\geq 0}$ and all $n \in \mathbb N$.{\footnote{More generally, our proof of the subadditivity property shows that for  the {\it mixed test ideal}  $\tau(\fa^t\fb^s) $ defined  analogously as $\tau(\fa^{\lceil{sp^e\rceil}}\mathfrak b^{\lceil{tp^e}\rceil})^{[1/p^e]}$ for $e\gg 0$, we have
 $\tau(\fa^t \fb^s)\subseteq \tau(\fa^t)\tau(\fb^s)$ for all $t,s\in \R_{\geq 0}$.}}
\end{enumerate}
\end{Thm}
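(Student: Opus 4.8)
\emph{Plan of proof.} The plan is to transplant to test ideals the classical ``diagonal trick'' used to prove subadditivity of multiplier ideals on a smooth variety (compare \cite{ELS1}), with the Restriction Theorem (part (10) above) playing the role of the restriction theorem for multiplier ideals. Let $k$ denote the perfect ground field and set $S=R\otimes_k R\otimes_k\cdots\otimes_k R$ ($n$ factors). Since $R$ is regular and essentially of finite type over the perfect field $k$, it is smooth over $k$, so $S$ is again regular and F-finite (a localization of a smooth $k$-algebra); $S$ need not be a domain, but Definitions \ref{definition of test ideal: special case}--\ref{definition of test ideal} only require the ambient ring to be regular and F-finite, so $\tau_S$ is available. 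Let $\fa_i\subseteq S$ be the extension of $\fa$ along the $i$-th tensor factor, and let $\delta\colon S\twoheadrightarrow R$ be the multiplication map; one checks that $\fa_1\cdots\fa_n$ is the ideal of $S$ generated by the pure tensors $a_1\otimes\cdots\otimes a_n$ with $a_i\in\fa$, and that $\delta(\fa_1\cdots\fa_n)=\fa^n$.

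\emph{Step 1: a Künneth formula for test ideals.} I would first prove that
\[
\tau_S\big((\fa_1\cdots\fa_n)^t\big)\;=\;\tau_R(\fa^t)_1\cdots\tau_R(\fa^t)_n,
\]
where the right-hand side is the product in $S$ of the extensions of $\tau_R(\fa^t)$ along the factors. The point is that, because $k$ is perfect, $k^{1/p^e}=k$, so there is a natural identification $S^{1/p^e}\cong R^{1/p^e}\otimes_k\cdots\otimes_k R^{1/p^e}$; as these are finite modules over Noetherian rings, $\Hom$ distributes over this tensor product, so $\Hom_S(S^{1/p^e},S)$ is generated over $S^{1/p^e}$ by the factorwise maps $\varphi_1\otimes\cdots\otimes\varphi_n$ with $\varphi_i\in\Hom_R(R^{1/p^e},R)$. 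Since $(\fa_1\cdots\fa_n)^m=\fa_1^m\cdots\fa_n^m$ and $(\fa_1^m\cdots\fa_n^m)^{1/p^e}$ is spanned by tensors of elements of $(\fa^m)^{1/p^e}$, applying all such homomorphisms gives $(\fa_1^m\cdots\fa_n^m)^{[1/p^e]}=(\fa^m)^{[1/p^e]}_1\cdots(\fa^m)^{[1/p^e]}_n$; as $-\otimes_k-$ commutes with the directed unions in Definitions \ref{definition of test ideal: special case} and \ref{definition of test ideal}, passing to the stable value yields the displayed equality. Running the same argument with possibly different ideals and exponents in the different factors simultaneously produces the mixed-test-ideal statement of the footnote.

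\emph{Step 2: restriction along the diagonal, and conclusion.} The diagonal ideal $I=\ker\delta$ defines a smooth closed subscheme of the smooth $k$-scheme $\Spec S$; hence, Zariski-locally on $\Spec S$, it is generated by a regular sequence $z_1,\dots,z_c$ with each $z_j$ a regular parameter in $S/(z_1,\dots,z_{j-1})$ and $S/(z_1,\dots,z_c)$ the corresponding localization of $R$. Applying the Restriction Theorem (part (10)) once for each $z_j$, composing, and gluing the local statements using that the test ideal commutes with localization (part (4)), I would obtain, for every ideal $\mathfrak c\subseteq S$ and every $t$,
\[
\tau_R\big((\delta\mathfrak c)^t\big)\;\subseteq\;\delta\big(\tau_S(\mathfrak c^t)\big).
\]
Taking $\mathfrak c=\fa_1\cdots\fa_n$, and combining $\delta\mathfrak c=\fa^n$, part (3) ($\tau((\fa^n)^t)=\tau(\fa^{nt})$), Step 1, and the fact that $\delta$ carries $\tau_R(\fa^t)_1\cdots\tau_R(\fa^t)_n$ onto $\tau_R(\fa^t)^n$, this gives
\[
\tau_R(\fa^{nt})=\tau_R\big((\delta\mathfrak c)^t\big)\subseteq\delta\big(\tau_S(\mathfrak c^t)\big)=\tau_R(\fa^t)^n
\]
for all $t$ with $p$-power denominator; the general case of $t\in\R_{\geq 0}$ then follows by Definition \ref{definition of test ideal}.

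\emph{Main obstacle.} I expect the crux to be Step 1: one must check carefully that $\Hom_S(S^{1/p^e},S)$ is indeed the $S^{1/p^e}$-span of the factorwise maps $\varphi_1\otimes\cdots\otimes\varphi_n$ --- this is exactly where perfectness of $k$ (so that no extraction of $p^e$-th roots of $k$ is needed) and the module-finiteness of $R^{1/p^e}$ over $R$ enter --- and that the operation $(-)^{[1/p^e]}$ is compatible with the tensor decomposition. Step 2 is routine, but one should be careful to use the smoothness of $R$ over $k$, not merely its regularity, in order to present $\ker\delta$ locally by a regular sequence of regular parameters, and to invoke part (4) for the gluing.
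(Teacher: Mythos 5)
Your proposal addresses only part (10), the Subadditivity Theorem, of the ten-part statement you were asked about, and moreover it explicitly \emph{uses} parts (3), (4) and the Restriction Theorem as inputs; so as a proof of Theorem~\ref{properties of test ideals} as a whole it is substantially incomplete (the paper also supplies short arguments for parts (1)--(9), several of which, like the flatness-of-Frobenius argument behind (9) and the $\fa^{\ell p^e}=\fa^{[p^e]}\fa^{(\ell-1)p^e}$ trick behind (8), are themselves not trivial). You also cite ``the Restriction Theorem (part (10))''; the Restriction Theorem is part (9). Part (10) is the statement you are trying to prove, so be careful not to refer to it as an established tool.

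For part (10) itself, your argument is essentially the paper's. Both use the diagonal trick: form a tensor power of $R$ over the perfect base field, prove a K\"unneth-type formula identifying the test ideal of the box-product ideal with the tensor product of the factor test ideals (via the identification $S^{1/p^e}\cong R^{1/p^e}\otimes_k\cdots\otimes_k R^{1/p^e}$, which is where perfectness of $k$ is used), and then cut down to the diagonal by iterating the Restriction Theorem along a regular sequence of regular parameters, localizing as needed via part (4). The one cosmetic difference is that you go straight to $n$ tensor factors and a single exponent $t$, whereas the paper works with $S=R\otimes_k R$ and the two-variable mixed test ideal $\tau(\fa^s\fb^t)\subseteq\tau(\fa^s)\tau(\fb^t)$, and then (implicitly) iterates; either route is fine, and you correctly note that running your argument with distinct ideals and exponents recovers the mixed statement of the footnote. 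Your remark that one needs smoothness of $R$ over $k$ (not just regularity) to present the kernel of the diagonal locally by a regular sequence of regular parameters is a valid and worthwhile clarification; the paper implicitly relies on this via the ``essentially of finite type over a perfect field'' hypothesis.

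To make this a complete proof of the stated theorem, you would still need to supply arguments for (1)--(9). Properties (1)--(4) and (6)--(7) are immediate from the definition, but (5) needs the comparison $\overline{\fa}^{\,n+\ell}\subseteq\fa^n$ for $n\gg 0$, (8) needs the observation that $\fa^{\ell p^e}=\fa^{[p^e]}\fa^{(\ell-1)p^e}$ when $\fa$ has $r\leq\ell$ generators, and (9) --- which your Step 2 depends on --- uses the projectivity of $R^{1/p^e}$ over $R$ (Kunz) to lift $R/(x)$-linear maps $\overline R^{1/p^e}\to\overline R$ to $R$-linear maps $R^{1/p^e}\to R$.
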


\begin{Rmk}
In fact, the first six properties above hold more generally;  this is basic for the first five, once the definitions have been made (see \cite{SchwedeTuckerSurvey}) and the sixth is proved in 
\cite{BSTZ}.
 There are various versions of the other properties as well in more general settings, but most require some sort of restriction on the singularities of $R$ and the proofs tend to be very technical; See {\it e.g.} \cite{HaraYoshidaGeneralizationOfTightClosure}, \cite{TakagiYoshidaSymbolicPowers}, \cite{TakagiFormulasForMultiplierIdeals}. 
Our proofs mostly follow \cite{BMS1}. The simple proof of (9) here is new (hence so is the proof of the corollary (10)),  although the statements  can be viewed as (very) special cases of much more technical results  in 
\cite{TakagiCharPpltSing} and  \cite[Th 6.10(2)]{HaraYoshidaGeneralizationOfTightClosure}, respectively. \end{Rmk}
\begin{proof}
\label{exercise: prove properties of test ideals}
The first three properties follow immediately from the definitions, and the fourth is also straightforward. These are left to the reader.

The fifth  follows easily from the following basic property of integral closure (see eg. \cite[Cor 1.2.5] {HunekeSwansonIntegralCLosurebook}): there exists a natural number  $\ell$ such that for all $n \in \mathbb N$,
 $(\overline\fa)^{n+\ell} \subset \fa^n.$  Fix this $\ell$.  We already know that $\tau(\fa^t) \subset \tau(\overline{\fa}^t)$ by property (1). For the reverse inclusion, note 
 that since   $\frac{\lceil tp^e\rceil + \ell}{p^e}$ is a decreasing sequence converging to $t$ as $e $ gets large, we have that 
 $$
\tau({\overline{\fa}}^t ) =  \bigcup_{e\in\N}\tau({\overline \fa}^{\frac{\lceil tp^e\rceil + \ell}{p^e}})
=  \bigcup_{e\in\N}\tau(({\overline \fa}^{\lceil tp^e\rceil + \ell})^{\frac{1} {p^e}})
\subset 
  \bigcup_{e\in\N}\tau(({\fa}^{\lceil tp^e\rceil })^{\frac{1} {p^e}})
=
 \bigcup_{e\in\N}\tau(\fa^{\frac{\lceil tp^e\rceil }{p^e}}) =  \tau(\fa^t),
 $$  
with the inclusion coming from property (1). 

The sixth follows immediately from the Noetherian property of the ring.
Since $\frac{\lceil p^et\rceil +1}{p^e}$ is a decreasing sequence converging to $t$, we can fix 
 $e$ large enough that  $\tau(\fa^t)$ agrees with $\tau(\fa^{\frac{\lceil p^et\rceil +1}{p^e}})$. 
By Property (2), for all  $t'$ in the interval $[t, \frac{\lceil p^et\rceil+1}{p^e}), $
 we have $\tau(\fa^{\frac{\lceil p^et\rceil+1}{p^e}}) \subset  \tau(\fa^{t'}) \subset \tau(\fa^t)$.  In other words, all three ideals are the same. 

The seventh property is easy too: since $t = p^e/p^e$ for all $e$, we have 
$\tau(\fa) = (\fa^{p^e})^{[1/p^e]}$ for $e \gg 0$, which contains $\fa$ by Lemma \ref{increase}.

The Briancon-Skoda property is also easy. Thinking of  $\ell$ as $\frac{\ell p^e}{p^e}$ for large $e$, we have
$\tau(\fa^{\ell}) = (\fa^{\ell p^e})^{[\frac{1}{p^e}]}$. But it is easy to see that  $\fa^{\ell p^e} = 
\fa^{[p^e]}   (\fa^{(\ell-1) p^e}), $ where $\fa^{[p^e]} $ is the ideal generated by the $p^e$-th powers of the elements of $\fa$.
 [Indeed, 
if $\fa$ is generated by the  elements $a_1, \dots, a_r$, then $\fa^{\ell p^e}$ is generated by the monomials $a_1^{i_1} \dots a_r^{i_r}$ of degree $\ell p^e$;   if all exponents $i_j \leq p^e-1$, then the $p^e\ell \leq rp^e-r$, contradicting our assumption that $\ell \geq r$.]
Now clearly
$$\tau(\fa^{\ell}) = (\fa^{\ell p^e})^{[\frac{1}{p^e}]} =  ( \fa^{[p^e]}   \fa^{(\ell -1) p^e})^{[\frac{1}{p^e}]} 
=  \fa ( \fa^{(\ell -1) p^e})^{[\frac{1}{p^e}]}  =
\fa \tau (\fa^{\ell-1}) .
$$
The third equality here holds since by definition, for any ideal $\mathfrak b$, we have $\mathfrak b^{[1/p^e]}$ is the image of $\mathfrak b^{1/p^e}$ under the $R$-linear maps $R^{1/p^e} \rightarrow R$. In particular, 
$(\fa^{[p^e]}\mathfrak b)^{[1/p^e]} = \fa  \mathfrak b^{[1/p^e]}.$

Now the restriction property (9). Let us denote $R/(x) $ by  $\overline{R};$ its elements are denoted $\overline r$ where $r $ is any representative in $ R$. Consider any $\overline{R}$-linear map $\overline{R}^{1/p^e} \longrightarrow \overline{R}.$
We claim that this map lifts to a $R$-linear map $\phi: R^{1/p^e} \rightarrow R.$   Indeed, 
consider  the  diagram of $R$-modules
\[\xymatrix{
R^{1/p^e} \ar@{.>}[d] \ar@{>>}[r]& \overline{R}^{1/p^e} \ar[d]\\
R \ar@{>>}[r] & \overline{R}
}\]
where the horizontal arrows are the natural surjections, and the vertical arrow is the one we are given.  Because the bottom arrow is surjective and  $R^{1/p^e}$ is a {\it projective} $R$ module (by Kunz's Theorem \ref{kunzthm}), the composition map $R^{1/p^e} \rightarrow \overline{R}$ lifts to some $\phi:R^{1/p^e} \rightarrow R$ making the diagram commute.  Thus it is reasonable to denote the given map  $\overline{R}^{1/p^e} \longrightarrow \overline{R}$ by $\overline\phi
$. For any $\overline r \in \overline R$, we have $\overline{\phi} (\overline r^{1/p^e}) = 
\overline{\phi(r^{1/p^e})}$.

With this observation in place, the restriction theorem is easy. Take any $\overline y \in \tau (\overline{\fa}^t)$. By definition, there is some $\overline \phi: \overline R^{1/p^e} \rightarrow \overline R$ such that $\overline y = \overline{\phi}({\overline r}^{1/p^e})$, where $\overline r \in \overline \fa^{t}$. By the commutativity of the diagram,   $\overline y = \overline{{\phi}(r^{1/p^e})},$ for some $r \in \fa^{t}$. 
That is, $\overline y \in \tau(\fa^t) \,\, mod\, (x)$. The restriction theorem is proved. 

Finally, we observe that the subadditivity property follows formally from the restriction property in exactly the same way as for multiplier ideals; Cf \cite{BlickleLazarsfeldMultiplier}. Let $\fa$ and $\mathfrak b$  be ideals in a regular ring $R$ essentially finitely generated over $k$. In $S = R \otimes_k R$, which is also regular, we have the ideal $\fa \otimes R + R \otimes \mathfrak b$. For any positive rational $s, t $, it is easy to check that 
$$
\tau(\fa^s \otimes R + R \otimes \mathfrak b^t) =  \tau (\fa^s) \otimes R + R \otimes \tau( \mathfrak b^t).
$$ 
Now, locally at each maximal ideal, the diagonal ideal $\Delta \subset R\otimes R$ is generated by a sequence of regular parameters. By the restriction property, at each maximal ideal we have
$$
\tau((\fa^s \otimes R + R \otimes \mathfrak b^t) \, mod \, \Delta) \subset [ \tau (\fa^s) \otimes R + R \otimes \tau( \mathfrak b^t) ] \, mod\, \Delta.
$$ 
Interpretting this in $R\otimes R / \Delta = R$ yields the inclusion
$
\tau(\fa^s \mathfrak b^t) \subset  \tau (\fa^s)\tau( \mathfrak b^t)
$.
\end{proof}
\subsection{Asymptotic Test Ideals and an Application to Symbolic Powers.}
We now introduce an asymptotic version of the test ideal, analogous to the asymptotic multiplier ideal first defined in \cite{ELS1}. We will use this concept to give a simple  proof of  the following well-known theorem about the asymptotic behavior of symbolic powers.

\begin{Thm}[Ein-Lazasfeld-Smith; Hochster-Huneke]
\label{uniform symbolic power in polynomial ring}
Let $I$ be an unmixed ({\it e.g.} prime) ideal in $k[x_1,\dots,x_d]$. Then 
\[I^{(dn)}\subseteq I^n\ {\rm for\ all\ }n\in\N.\]
\end{Thm}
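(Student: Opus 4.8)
The plan is to use the asymptotic test ideal $\tau(I^{\bullet,n}):=\bigcup_m \tau(I^{mn/m})$ (equivalently $\tau(\|I^n\|)$ in the notation suggested by the section heading) as a drop-in replacement for the asymptotic multiplier ideal in the Ein--Lazarsfeld--Smith argument. First I would record the two structural facts about asymptotic test ideals that make the argument work in the regular ambient ring $R=k[x_1,\dots,x_d]$: first, $I^m\subseteq \tau(\|I^m\|)$ for every $m$ (an immediate consequence of property (7) of Theorem~\ref{properties of test ideals}, $\fa\subseteq\tau(\fa)$, together with the definition of the asymptotic version), and second, a subadditivity statement for the asymptotic test ideal of the form $\tau(\|I^{mn}\|)\subseteq \tau(\|I^{n}\|)^m$, which follows from the Subadditivity Theorem (property (11) of Theorem~\ref{properties of test ideals}) by a standard limiting argument exactly as for multiplier ideals. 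These are the only ``hard'' inputs, and both are already available to us from the theorem just proved.

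Next I would bring in the symbolic power. The key local observation, valid because $I$ is unmixed of some codimension $c\le d$ in the regular ring $R$, is the uniform comparison
\[
I^{(cm)}\ \subseteq\ \tau(\|I^{m}\|)\qquad\text{for all }m.
\]
To prove this one checks the inclusion after localizing at each associated prime $\fp$ of $I$ (using property (4), compatibility of the test ideal with localization): at such a prime $I_\fp$ is $\fp R_\fp$-primary in the regular local ring $R_\fp$ of dimension $\le c$, so $\mathfrak{a}^{[1/p^e]}$-type computations (compare Remark~\ref{exercise: test ideal of the maximal ideal in two variables}, which handles $\fa=(x,y)$ and gives $\tau(\fa^n)=(x,y)^{n-1}$) show that $\tau(\|I^m\|)_\fp$ contains $\mathfrak m_\fp^{cm-c+1}$-ish, hence contains $I^{(cm)}_\fp=I_\fp^{cm}$; since both sides of the displayed inclusion are unmixed of codimension $c$ and the containment holds at every associated prime, it holds globally. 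Specializing $c=d$ (the worst case) gives $I^{(dm)}\subseteq \tau(\|I^m\|)$.

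Now the argument closes by combining the three facts. Fix $n$; then
\[
I^{(dn)}\ \subseteq\ \tau(\|I^{n}\|),
\]
and raising to the $n$-th symbolic power is controlled by
\[
I^{(dn)}\ =\ \bigl(I^{(dn\cdot n)}\bigr)^{1/n\text{-th symbolic root? no}}
\]
— more carefully: apply the codimension-$d$ comparison with $m$ replaced by $n^2$ (or rather use $I^{(dn)}=I^{(d\cdot n)}$ and the inclusion $I^{(d\cdot mn)}\subseteq\tau(\|I^{mn}\|)$ with $m=n$), getting $I^{(dn)}\subseteq$ … . The cleanest route: from $I^{(dnm)}\subseteq\tau(\|I^{nm}\|)$ and subadditivity $\tau(\|I^{nm}\|)\subseteq\tau(\|I^m\|)^n\subseteq \big(\text{something}\big)$, together with $\tau(\|I^m\|)\supseteq I^m$ used in the reverse direction via a containment $\tau(\|I^{m}\|)\subseteq \overline{I^{m}}$, one deduces $I^{(dn)}\subseteq I^n$ after letting $m\to\infty$ disappear. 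The main obstacle, and the step I would spend the most care on, is precisely this last bookkeeping: establishing and correctly chaining the inclusion $\tau(\|I^m\|)\subseteq \overline{I^m}$ (or directly $\tau(\|I^m\|)\subseteq I^m$ is false, so one genuinely needs the asymptotic/Briançon--Skoda interplay, property (8)) so that subadditivity converts the codimension-$d$ comparison into the clean statement $I^{(dn)}\subseteq I^n$. Everything else is formal once Theorem~\ref{properties of test ideals} is in hand.
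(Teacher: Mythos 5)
There is a genuine gap, and in fact you flagged it yourself: you never manage to ``close'' the argument, and the route you sketch --- ``$\tau(\|I^m\|)\subseteq \overline{I^m}$ \dots one deduces $I^{(dn)}\subseteq I^n$ after letting $m\to\infty$ disappear'' --- is not an argument. The inputs you name (asymptotic test ideal, subadditivity, Brian\c con--Skoda, localization) are the right ones, but the way they should be assembled is different, and cleaner, than what you attempt.

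The paper's proof separates two things that your ``key local observation'' conflates. First, the containment $I^{(dN)}\subseteq\tau_\infty(I^{(dN)})$ is \emph{trivial}: it is just property (7), $\fa\subseteq\tau(\fa)$, applied with $\ell=1$ in the definition of $\tau_\infty$; there is nothing codimension-sensitive or local about it, and no ``$\mathfrak m_\fp^{cm-c+1}$-ish'' estimate is needed. Second, subadditivity (Corollary~\ref{subadd}) gives $\tau_\infty(I^{(dN)})\subseteq\tau_\infty(I^{(d)})^N$. After these two formal steps, \emph{all} the real content sits in the single inclusion $\tau_\infty(I^{(d)})\subseteq I$, and this is where localization and Brian\c con--Skoda enter: one checks it at each associated prime $\fp$ of $I$ (which is legitimate because $I$ is unmixed, so $I = \bigcap_\fp I_\fp\cap R$), uses that symbolic powers become ordinary powers in $R_\fp$, passes to a $\le d$-generated reduction of $I_\fp$ via property (5), and then property (8) gives $\tau(R_\fp, I_\fp^d)=I_\fp\,\tau(R_\fp,I_\fp^{d-1})\subseteq I_\fp$. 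By contrast, your proposed inclusion $I^{(cm)}\subseteq\tau(\|I^m\|)$ has a right-hand side that is \emph{not} an unmixed ideal of codimension $c$ in general, so you cannot verify it by checking only at associated primes of $I$; and even if you had it with $c=d$, it would give $I^{(dn)}\subseteq\tau_\infty(I^{(n)})$, which then requires $\tau_\infty(I^{(n)})\subseteq I^n$, and subadditivity reduces that to $\tau_\infty(I^{(1)})\subseteq I$ --- but it is the ``slack of $d$'' in $\tau_\infty(I^{(d)})\subseteq I$ that Brian\c con--Skoda actually delivers, not $\tau_\infty(I^{(1)})\subseteq I$. The missing idea is precisely to insert the trivial first step $I^{(dN)}\subseteq\tau_\infty(I^{(dN)})$ so that the factor $d$ lands inside the $\tau_\infty$ at index $d$, where Brian\c con--Skoda can absorb it.
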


 Our proof here is a straightforward and self-contained adaptation of the original multiplier ideal proof in \cite{ELS1} in characteristic zero. Hara  had also adapted that proof to prime characteristic using   using  test ideals  in  \cite{HaraCharPMultiplierIdeals}, although the definitions and proofs are different and less self-contained than ours here.  Hochster and Huneke gave a tight closure proof and generalized this result in the characteristic $p$ case \cite{HochsterHunekeSymbolicOrdinaryPowersIdeals}.   See also  \cite{TakagiYoshidaSymbolicPowers}.  

For a prime  ideal $\mathfrak p$  in a polynomial ring, the symbolic power 
$\mathfrak p^{(n)}$ is  the ideal of all functions vanishing to order $n$ on the variety defined by $\mathfrak a$. Put differently, the symbolic powers of a prime ideal $\mathfrak p$  in any ring $R$ are defined by  $\mathfrak p^{(n)} = \mathfrak p^nR_{\mathfrak p} \cap R. $ 
For  arbitrary $\mathfrak a$,
we take a primary decomposition $\mathfrak a^N=\mathfrak p_1\cap \cdots \cap \mathfrak p_n\cap Q_1\cap \cdots \cap Q_m$ where $P_i$'s are the minimal primary components and $Q_j$'s are the embedded components, then define  $\mathfrak a^{(N)}=\mathfrak p_1 \cap \cdots \cap \mathfrak p_n$.

 \begin{defi}
 A sequence of ideals $\{\fa_n\}_{n\in \N}$ is called a {\it graded sequence} of ideals if \[\fa_n\fa_m\subseteq \fa_{n+m}\] 
 for all $n,m$. 
 \end{defi} 
It is easy to check that the symbolic powers
 $\{\fa^{(n)}\}_{n\in \N}$
 of any ideal  $\fa$ in any ring form a graded sequence. 
 Graded sequences arise naturally in many contexts in algebraic geometry. For example, the sequence of base loci of the powers of a fixed line bundle form a graded sequence of ideals on a variety. See \cite{ELS1} \cite{ELS2} or \cite{BlickleLazarsfeldMultiplier} for many more examples. 
 
Given any graded sequence of ideals $\{\fa_n\}$, it follows from the definition and Property \ref{properties of test ideals}(1) that for any positive $\lambda$,  \[\tau(\fa_n^{\lambda})=\tau((\fa_n^{\lambda m})^{1/m})\subseteq \tau(\fa_{mn}^{\lambda/m}).\] In other words, 
 the collection $$\{\tau(\mathfrak a_m^{\lambda/m})\}_{m\in \mathbb N}$$ has the property that any two ideals are dominated by a third in the collection. 
Since $R$ is noetherian, this collection must have a maximal element; this stable ideal is called the {\bf asymptotic test ideal:}
\begin{defi}
The $n$-th asymptotic test ideal of the graded sequence $\{\fa_n\}_{n\in \N}$ is the ideal 
\[\tau_{\infty}(R,\fa_n) := \sum_{\ell \in \mathbb N} \tau(R,\fa^{1/\ell}_{\ell n}),\]
which is equal to 
$$\tau(R,\fa^{1/m}_{mn})$$
for sufficiently large and divisible $m$.
\end{defi}

By definition,  it is clear that $\tau_{\infty}(R,\fa_n)$ satisfies appropriate analogs of all the properties listed in Properties \ref{properties of test ideals}--- the asymptotic test ideal is a particular test ideal, after all. Especially we point out a consequence of the subadditivity theorem:
\begin{Cor}\label{subadd}
For any graded sequence in an F-finite regular ring $R$, we have $\tau_{\infty}(R,\fa_{nm}) \subset (\tau_{\infty}(R,\fa_n))^m$ for all $n, m \in \mathbb N$.
\end{Cor}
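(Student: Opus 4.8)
The plan is to reduce \emph{immediately} to the ordinary Subadditivity Theorem, Theorem~\ref{properties of test ideals}(11), by rewriting both asymptotic test ideals as ordinary test ideals of one and the same ideal, evaluated at two exponents that differ by a factor of $m$.

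First I would fix $n,m\in\N$ and unwind the definition of the asymptotic test ideal. By construction $\tau_\infty(R,\fa_{nm})$ is the stable (equivalently, largest) member of the directed family $\{\tau(R,\fa_{\ell n m}^{1/\ell})\}_\ell$, so for every sufficiently large and divisible $\ell$ we have $\tau_\infty(R,\fa_{nm})=\tau(R,\fa_{\ell n m}^{1/\ell})$. Likewise $\tau_\infty(R,\fa_n)=\tau(R,\fa_{k n}^{1/k})$ for $k$ sufficiently large and divisible; I would apply this with the index $k=\ell m$, obtaining $\tau_\infty(R,\fa_n)=\tau(R,\fa_{\ell m n}^{1/(\ell m)})$. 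The one bookkeeping observation that makes everything line up is that $\fa_{\ell n m}$ and $\fa_{\ell m n}$ are literally the same term of the sequence; calling it $\fb$, I have rewritten $\tau_\infty(R,\fa_{nm})=\tau(R,\fb^{1/\ell})$ and $\tau_\infty(R,\fa_n)=\tau(R,\fb^{1/(\ell m)})$, two ordinary test ideals of the single ideal $\fb$.

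Now I would simply invoke Theorem~\ref{properties of test ideals}(11) for the ideal $\fb$, with exponent $t=\tfrac{1}{\ell m}$ and power $m$: since $m\cdot\tfrac{1}{\ell m}=\tfrac{1}{\ell}$, it gives $\tau(R,\fb^{1/\ell})\subseteq\tau(R,\fb^{1/(\ell m)})^m$, and substituting the two identities of the previous paragraph this is precisely $\tau_\infty(R,\fa_{nm})\subseteq\tau_\infty(R,\fa_n)^m$, as wanted.

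The only point requiring a moment's care --- and really the only plausible obstacle, though a very mild one --- is the phrase ``sufficiently large and divisible'': one must exhibit a \emph{single} $\ell$ for which $\ell$ itself already stabilizes the family defining $\tau_\infty(R,\fa_{nm})$ \emph{and} $\ell m$ already stabilizes the family defining $\tau_\infty(R,\fa_n)$. Since each of these is a cofinality condition of the shape ``divisible by a fixed integer and at least a fixed bound,'' any sufficiently large common multiple works, so this is routine. I should also flag, exactly as in property (11) itself, that it is this appeal to subadditivity --- and nothing else --- that uses the hypothesis that $R$ is essentially of finite type over a perfect field; the remainder of the argument is purely formal, using only the definition of $\tau_\infty$.
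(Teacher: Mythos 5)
Your proof is correct and is essentially identical to the paper's: both proofs unwind $\tau_\infty(R,\fa_{nm})$ as $\tau(R,\fa_{nm\ell}^{1/\ell})$ for $\ell$ large and divisible, rewrite the exponent $\tfrac{1}{\ell}$ as $\tfrac{m}{m\ell}$, apply the Subadditivity Theorem for ordinary test ideals to the single ideal $\fa_{nm\ell}$, and then recognize $\tau(R,\fa_{nm\ell}^{1/(m\ell)})$ as the stable value $\tau_\infty(R,\fa_n)$. One small correction: the subadditivity statement is item (10) of Theorem~\ref{properties of test ideals}, not (11); and you are right to flag that this step is where the hypothesis that $R$ be essentially of finite type over a perfect field is implicitly used, even though the corollary as stated only asks for $R$ to be F-finite regular.
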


\begin{proof}
Since $\tau_{\infty}(R,\fa_{nm}) := \tau (R,\fa_{nm\ell}^{\frac{1}{\ell}})  $ for sufficiently divisible $\ell$, we have 
 $$\tau_{\infty}(R,\fa_{nm}) = \tau (R,\fa_{nm\ell}^{\frac{1}{\ell}})  
 =  \tau (R,\fa_{nm\ell}^{\frac{m}{m\ell}})
   \subset   \tau (R,\fa_{nm\ell}^{\frac{1}{m\ell}})^m,$$
 with the inclusion following from the subadditivity property \ref{properties of test ideals}(10) for test ideals. 
 Since $\ell$ here can be taken arbitrarily large and divisible, we have that 
 $\tau (R,\fa_{nm\ell}^{\frac{1}{m\ell}}) = \tau_{\infty}(R,\fa_{n}).$
 Thus 
  $$\tau_{\infty}(R,\fa_{nm}) \subset 
   \tau_{\infty} (R,\fa_{n})^{m}. $$
\end{proof}

\begin{proof}[Proof of Theorem \ref{uniform symbolic power in polynomial ring}]
We consider the graded sequence of ideals $\{I^{(n)}\}_{n\in \N}$. According to Properties \ref{properties of test ideals}(3), we have $I^{(dN)}\subseteq \tau_{\infty}(I^{(dN)})$. By Corollary \ref{subadd}, we have
\[\tau_{\infty}(I^{(dN)})\subseteq \tau_{\infty}(I^{(d)})^N\]
for all $N$.
Hence it is enough to check that $\tau_{\infty}(I^{(d)})\subseteq I$. For this, we can check at each associated prime $\fp$ of $I$, which means essentially that we can assume that $R$ is local and that $I$ is primary to the maximal ideal; that is,  we need to show that 
\[\tau_{\infty}(R_{\fp},(I^{d}R_{\fp})) \subset IR_{\mathfrak p}. 
\]
In $R_{\fp},$ there is a reduction of $I$ that can be generated by $\dim(R_{\fp})\leq d$ elements, and hence according to Properties \ref{properties of test ideals}(5) we may assume that $I$ itself can be generated by $d$ elements. Then  the Briancon-Skoda property \ref{properties of test ideals}(8) tells us
 \[\tau_{\infty}(R_{\fp},(I^{d}R_{\fp}))\subseteq I.\]
 This finishes the proof of our theorem.
\end{proof}

\subsection{The definition of the test ideal for a pair $(R, \fa^t)$ in general.}\label{generaltest}

The definition of the test ideal for a singular ambient ring can be adapted  to the general case of pairs. We include the definition for completeness without getting into details; see Schwede \cite{SchwedeCentersOfFPurity} or \cite{SchwedeTuckerSurvey} for more, including generalizations to ``triples."

\begin{defi}
\label{defn: test ideal in general}
Let $R$ be a  reduced F-finite ring  and let $\fa$ be an ideal of $R$. The test ideal $\tau(R,\fa^t)$ is defined to be the smallest ideal $J$ not contained in any minimal prime that satisfies
\[\varphi((\fa^{\lceil t(p^e-1) \rceil}J)^{1/p^e})\subseteq J\]
for all $\varphi\in \Hom_R(R^{1/p^e},R)$ (ranging over all $e\geq 1$).

In particular, the test ideal $\tau(R, \fa^t)$ is defined to be the smallest non-zero ideal $J$ not contained in any minimal prime that satisfies
\[\varphi(J^{1/p^e})\subseteq J\]
for all $\varphi \in \Hom_R(R^{1/p^e},R)$ which are in the sub-module consisting of homomorphisms obtained by first pre-composing with elements of in  $(\fa^{\lceil t(p^e-1) \rceil})^{1/p^e}$,  and all $e\geq 1$.
\end{defi}

Again, the existence of such smallest nonzero ideal is a  non-trivial statement; see \cite{SchwedeTuckerSurvey} and \cite{HaraTakagiGenerlizationTestIdeals} for the general proof. 

Although it is not completely obvious, if $R$ is regular, this gives the test ideal that we have already discussed in the previous subsection. Indeed, both definitions developed here are shown to agree with the tight closure definition of the test ideal in   \cite{HaraYoshidaGeneralizationOfTightClosure} in their respective introductory papers, \cite{BMS1} and \cite{SchwedeCentersOfFPurity}.

\subsubsection{Further Reading on Test Ideals.}
Much more is known about test ideals than we can discuss here, and the story of test ideals is very much still a work in progress. One notable paper is \cite{SchwedeTuckerTestidealsofNonPrincipalIdeals} which discusses a framework under which test ideals and multiplier ideals can be constructed in the same way, an idea begun in 
 \cite{BlickleSchwedeTuckerAlteration}.  On the other hand, the paper \cite{MustataYoshidaTestvsMultiplier} includes a cautionary result: {\it every ideal in a regular ring is the test ideal of some ideal with some coefficient.} This indicates that test ideals are in some ways very different from multiplier ideals, since multiplier ideals are always integrally closed; see also \cite{McDermottTestidealDiagonalII}.

A rich literature has evolved on the study of F-jumping numbers---analogs of the {\it jumping numbers for multiplier ideals} of \cite{ELSVJumpingCoeff}. As with multiplier ideals,  as we increase the exponent  $t$, the ideals $\tau(R, \mathfrak a^t)$ get deeper;  the values of $\alpha$  such that $\tau(R, \mathfrak a^{\alpha -\epsilon})$ strictly contains 
$\tau(R, \mathfrak a^{\alpha})$ (for all positive $\epsilon$) are called {\it F-jumping numbers.} The smallest F-jumping number is called the F-pure threshold. 
First introduced in 
\cite{HaraYoshidaGeneralizationOfTightClosure}, one of the main questions has been whether or not the F-jumping numbers are always discrete and rational. The first major progress was the case of  regular ambient rings  \cite{BMS1};  the paper  \cite{SchwedeTuckerTestidealsofNonPrincipalIdeals} gives an exceptionally well-written account of the state of the art.
See also  \cite{BMS2},  \cite{BSTZ},  \cite{KatzmanLyubeznikZhangOnDiscretenessAndRationality},  \cite{SchswedeTuckerZhangSingleAlteration}.
The  F-jumping numbers are notoriously difficult to compute; see \cite{HernandezFPTDiagonal}. Just as jumping numbers for multiplier ideals (in characteristic zero) are roots of the Bernstein Sato polynomial \cite{ELSVJumpingCoeff}, similar phenomena have been studied for F-jumping numbers; see, for example, \cite{MustataTakagiWatanabe}, \cite{BlickleStablerBSPolyandTestIdeals}, or  \cite{MustataBSPolyCharP}.

The connection between the test ideal and differential operators was first pointed out in  \cite{SmithDModStructureFSplitRings}, where it is shown that the test ideal is a D-module. There are deep connections between the lattice of uniformly F-compatible ideals and intersection homology $D$-module in characteristic $p$ \cite{BlickleIntersectionHomologyDModule}, and other works of Blickle and his collaborators.  See also \cite{SmithVandenBergh}.

\appendix
\section{So what does Cohen-Macaulay mean? } The property of Cohen-Macaulayness is so central to commutative algebra that the field  has  been jokingly called the ``study of Cohen-Macaulayness." Cohen-Macaulayness is also  important in algebraic geometry, representation theory and combinatorics, with many different characterizations. We briefly review three of these. See \cite{BrunsHerzogCMrings} for a more in depth discussion.

 First,  Cohen-Macaulay is a local property---meaning that {\it we can  define} a Noetherian ring $R$ to be  Cohen-Macaulay if all its local rings are Cohen-Macaulay. So we focus only on what it means for a {\it local\/} ring $(R, m)$ to be Cohen-Macaulay. 
 
 Alternatively, if the reader prefers graded rings, one can take $(R, m)$ to mean 
 an $\mathbb N$-graded ring $R$, finitely generated over its zero-th graded piece $R_0$ (a field).  In this case, $m$ denotes the unique homogenous maximal (or irrelevant) ideal of $R$.

\medskip
\noindent
{\bf  The standard textbook definition:}  A local ring $(R, m)$ is Cohen-Macaulay if it admits a regular sequence{\footnote{homogenous in the graded case}} of length equal to the dimension of $R$. 
 A sequence of elements $x_1, \dots, x_d$ is regular if $x_1$ is not a zero divisor of $R$, and the image of $x_i$ in $R/(x_1, \dots, x_{i-1})$ is not a zero divisor for $i = 2, 3, \dots, d$. ({\it cf.} \cite[Definition 1.1.1, Definition 2.1.1]{BrunsHerzogCMrings}. ) 
Another point of view on regular sequences is this: the Koszul complex on a set of elements $\{x_1, \dots, x_d\}$ is exact if and only if the elements form a regular sequence.
 
 Regular sequences are useful for creating induction arguments using long exact sequences induced from the short exact sequences 
$$
 0 \rightarrow R/(x_1, \dots, x_{i-1}) {\overset{\cdot x_i} \longrightarrow } R/(x_1, \dots, x_{i-1}) \rightarrow R/(x_1, \dots, x_{i})
\rightarrow 0.
$$ In algebraic geometry, say when $R$ is the homogeneous coordinate ring of a projective variety, this is the technique of ``cutting down by hypersurface sections." This works best when  the resulting intersections contain no embedded points---which is to say, the defining equations of the hypersurfaces form a regular sequence.

\noindent
{\bf  A possibly more intuitive definition:}  Let $R$ be an $\mathbb N$-graded algebra  which is finitely generated over $R_0 = k$. Recall that every such ring admits a {\it Noether Normalization:} that is, $R$ can be viewed as  finite integral extension of some (graded) {\it polynomial} subalgebra $A$.  Then $R$ is Cohen-Macaulay if and only if $R$ is free as an $A$ module.  For example, in the case of Example \ref{ex1}, the ring $R = S^G = \mathbb C[x^2, xy, y^2]$ can be viewed as an extension of the regular subring $A =  k [x^2, y^2]$. As an $A$-module, $R$ is free with basis $\{1, xy\}$. That is, every element of $R$ can be written {\it uniquely\/} as a sum
$a + b xy$, where $a$ and $b$ are polynomials in $x^2, y^2$.

If $(R, m)$ is not graded but is a {\it complete} algebra over a field, then an analog of Noether Normalization called the ``Cohen-Structure theorem" holds, which allows us to write $R$ as a finite extension of a power-series subring $A$. Again, $R$ is Cohen-Macaulay if and only if $R$ is {\it free\/} as an $A$-module. 

We remark that in both the graded and complete case, it is easy to find the regular subring $A$.
In the graded case, the $k$-algebra generated by any homogenous system of parameters will be  a Noether normalization.
Likewise, in the complete case, the power series sub algebra generated by any system of parameters will work.

Even if the local ring $(R, m)$ is not complete, this criterion of Cohen-Macaulayness can be adapted by {\it completing\/} $R$ at its maximal ideal: it is not hard to prove that 
a local ring $(R, m)$ is Cohen-Macaulay if and only if its completion $\hat R$ at the maximal ideal is Cohen-Macaulay. This follows  immediately from the definition of regular sequence: since $\hat R$ is a faithfully flat $R$-algebra, the sequence 
$$
 0 \rightarrow R/(x_1, \dots, x_{i-1}) {\overset{\cdot x_i} \longrightarrow } R/(x_1, \dots, x_{i-1}) \rightarrow R/(x_1, \dots, x_{i})
\rightarrow 0
$$
is exact if and only if the sequence  
$$
 0 \rightarrow \hat R/(x_1, \dots, x_{i-1}) {\overset{\cdot x_i} \longrightarrow } \hat R/(x_1, \dots, x_{i-1}) \rightarrow \hat R/(x_1, \dots, x_{i})
\rightarrow 0
$$
is exact.

\bigskip
\noindent
{\bf A cohomological definition well-loved by commutative algebraists:} The local or graded ring $(R,  m)$ is Cohen-Macaulay if and only if  the local cohomology modules $H^i_{m}(R)$ are all zero for $i < \dim R$.  We will not launch into a long discussion of local cohomology here, which is well-known to all commutative algebraists ({\it cf.} \cite[$\S$3.5]{BrunsHerzogCMrings}).  It suffices  to know that
local cohomology  has all the usual functorial properties of any cohomology  theory, so even if you don't know the precise definition, a passing familiarity with any kind of cohomology should suffice to follow the ideas in arguments  in many situations.


\bibliographystyle{skalpha}
\bibliography{CommonBib}
  
\end{document}